\makeatletter\@addtoreset{equation}{section}\makeatother
\newcommand{\eps}{\varepsilon}
\newcommand{\ssup}[1] {{\scriptscriptstyle{({#1}})}}
\newcommand{\R}{\mathbb R}
\newcommand{\Zz}{\mathcal Z}
\newcommand{\Gg}{\mathcal G}
\newcommand{\NN}{\mathbb N}
\newcommand{\EE}{\mathbb E}
\newcommand{\eE}{\varepsilon}
\renewcommand{\phi}{\varphi}
\newcommand{\Cc}{{\mathcal C}}
\newcommand{\Ee}{\mathcal E}
\newcommand{\DD}{\Delta}
\newcommand{\e}{\mathrm{e}}
\newcommand{\di}{\;\textrm{d}}
\newcommand{\N}{\mathbb N}
\newcommand{\PP}{\mathbb P}
\theoremstyle{plain}
\newtheorem{theorem}{Theorem} \crefname{theorem}{Theorem}{Theorems}
\newtheorem{rmk}{Remark}[section]
	\crefname{rmk}{Remark}{Remarks}
\newtheorem*{nrmk}{Remark on notation}
\newtheorem*{ack}{Acknowledgement}
\newaliascnt{lem}{rmk}
	\newtheorem{lem}[lem]{Lemma}
	\crefname{lem}{Lemma}{Lemmas}
\newaliascnt{cor}{rmk}
	\crefname{cor}{Corollary}{Corollaries}
\newaliascnt{dfn}{rmk}
	\crefname{dfn}{Definition}{Definitions}
\newaliascnt{prop}{rmk}
	\newtheorem{prop}[prop]{Proposition}
	\crefname{prop}{Proposition}{Propositions}
\newcommand{\heap}[2]  {\genfrac{}{}{0pt}{}{#1}{#2}}
\newcommand{\sfrac}[2] {\mbox{$\frac{#1}{#2}$}}
\def\1{{\mathchoice {1\mskip-4mu\mathrm l}      
{1\mskip-4mu\mathrm l}
{1\mskip-4.5mu\mathrm l} {1\mskip-5mu\mathrm l}}}
\renewcommand{\ssup}[1] {{\scriptscriptstyle{({#1}})}}
\newcommand{\con}[1] {\overset{#1}{\leftrightarrow}}
\begin{document}


\title[Distances in critical random networks]
{Distances in scale free networks at criticality}

\author[Steffen Dereich, Christian M\"onch, and Peter M\"orters]{Steffen Dereich, Christian M\"onch, and Peter M\"orters}
\maketitle

\vspace{-0.3cm}

\begin{quote}
{\small {\bf Abstract:} }
Scale-free networks with moderate edge 
dependence experience a phase transition between ultrasmall and small world behaviour when the power law exponent passes the critical value of three. Moreover, there are laws of large numbers for the graph distance of two randomly chosen vertices in the giant component. When the degree distribution follows a pure power law these show the same asymptotic distances of \smash{$\frac{\log N}{\log\log N}$} at the critical value three, but  in the ultrasmall regime reveal a difference of a factor two between the most-studied rank-one and preferential attachment model classes. In this paper we identify the critical window where this factor emerges. We  look at models from both classes when the asymptotic proportion of vertices with degree at least~$k$ scales like\smash{ $k^{-2} (\log k)^{2\alpha + o(1)}$} and show that for preferential attachment networks the typical distance is \smash{$\big(\frac{1}{1+\alpha}+o(1)\big)\frac{\log N}{\log\log N}$} in probability as the number~$N$ of vertices goes to infinity. By contrast the typical distance in  a rank one model with the same asymptotic degree sequence is \smash{$\big(\frac{1}{1+2\alpha}+o(1)\big)\frac{\log N}{\log\log N}.$} As $\alpha\to\infty$ we see the emergence of a factor two between the length of shortest paths as we approach the ultrasmall regime. 
\end{quote}

\vspace{0.5cm}

{\footnotesize
\vspace{0.1cm}
\noindent\emph{MSc Classification:}  Primary 05C82 
Secondary 05C80, 
60C05, 
90B15. 

\noindent\emph{Keywords:} Scale-free network, small world, Barab\'asi-Albert model, preferential
attachment, configuration model, dynamical random graph, power law,  giant component, 
critical phenomena, graph distance, diameter.}

\vspace{0.5cm}

\section{Background and Motivation}

\noindent
Scale-free networks are characterised by the fact that, as the network size goes to infinity, the asymptotic 
proportion of nodes with degree at least~$k$ behaves like $k^{-\tau+o(1)}$ for some power law exponent~$\tau$. There are a number of mathematical models for scale-free networks,  in the class of {\bf rank-one models} the probability that
two vertices are directly connected is asymptotically equivalent to the product of suitably defined weights $w_{v}$ associated to the 
vertices $v$  in a network $\Gg_N$ with vertex set  $[N]:=\{1,\ldots,N\}.$
Examples of rank-one models are the \emph{Chung-Lu model} where
$$\PP(u\leftrightarrow v)=\frac{w_u w_v}{\sum_{i=1}^Nw_i}\wedge 1,\qquad \mbox{ for } u,v\in[N],$$ 
the \emph{Norros-Reittu model} in which \begin{equation}\label{eq:NRprob}\PP(u\leftrightarrow v)= 1-\e^{-\frac{w_u w_v}{\sum_{i=1}^N w_i}},\qquad \mbox{ for } u,v\in[N],\end{equation} where $(w_i)_{i=1}^N$ is a deterministic
or random sequence of weights,  and the \emph{configuration model} in which each vertex is assigned  a degree chosen randomly from a given degree 
distribution and the weights are the degrees themselves. 
\medskip

A popular alternative to rank one models are the {\bf preferential attachment models} introduced by Barab{\'a}si and Albert. The original 
\emph{Barab{\'a}si-Albert model} (see Bollobas et al.~\cite{bollobs_degree_2001} for a rigorous definition) is a dynamical network model in 
which new vertices connect to a fixed number of existing vertices with a probability proportional to their degree. In this model the power 
law exponent is always $\tau=3$. Recent variants introduced by van der 
Hofstad et al.~\cite{dommers_diameters_2010} and Dereich and 
M\"orters~\cite{dereich_random_2009}, allow the connection probability to be proportional to a function of the degree and can therefore generate networks with variable power law exponent~$\tau>2$. 
Physicists have predicted that all these models of scale-free networks with the same power law exponent share essentially 
the  same global topology, see for example~\cite{albert_statistical_2002}.
\medskip

\pagebreak[3]

Indeed, all models listed above have been shown to experience a phase transition at power law exponent three. If $\tau>3$ randomly chosen vertices in the largest connected component have a distance of aymptotic order logarithmic in the network size, whereas for $2<\tau<3$ the distance behaves like an iterated logarithm of the network size, this phase is called the \emph{ultrasmall regime}.
\medskip

\pagebreak[3]

At the critical value $\tau=3$ a fine analysis has been performed
by Bollob\'{a}s and Riordan in their seminal paper~\cite{BR04}. They show that two randomly chosen vertices in the original Barab\'asi-Albert model have a graph distance~\smash{$(1+o(1))\,\log N/\log\log N$}. The same result holds for a variety of other models of scale-free networks when the asymptotic proportion of vertices with degree at least~$k$ scales precisely like $k^{-2}$. Examples include the rank one models of 
Chung and Lu, of Norros and Reittu, inhomogeneous random graphs with a suitable choice of kernel, 
and the configuration model.
\medskip

It was therefore believed that distances in preferential attachment models behave similar to 
distances in the configuration model with the same tail of the asymptotic degree distribution, see for example~\cite{hofstad_phase_2007}. It thus came as 
a surprise when a finer analysis in~\cite{dereich_typical_2012} showed that in the ultrasmall regime, i.e.\ when the 
power law exponent is in the range $2<\tau<3$, 
distances in preferential attachment models are twice as long as in the rank one models above when they have the same tail of the degree distribution. This is due to
the fact that two vertices of high degree in the preferential attachment model are much more likely to be connected 
by a path of length two, rather than a single edge as in the rank one models.
\medskip

\pagebreak[3]

It is the aim of the present paper to study the emergence of this factor two at the critical value $\tau=3$.
Does the factor occur at a sharp threshold and if so where is this threshold? 
Or is there a smooth transition between the factors one and two in a suitably chosen critical window?
To answer these questions we need to consider models that can be studied with logarithmic corrections 
in the tails of the aymptotic degree distribution, which requires us to look at preferential attachment models with \emph{nonlinear} attachment rules, an area essentially unexplored in the rigorous literature.
We look at preferential attachment models in the framework of~\cite{dereich_random_2009, dereich_random_2013}. This allows the attachment probabilities to be chosen as concave functions of the vertex degree, giving enough flexibility to generate varying asymptotic degree distributions. The critical window for our study emerges when the asymptotic proportion of nodes with degree at least~$k$ scales like $k^{-2} (\log k)^{2\alpha + o(1)},$ for some $\alpha>0$. We compare our results on preferential attachment networks with those on the Norros-Reittu model with i.i.d. weights whose degree sequence has the same tail behaviour. Our main result shows that typical distances in the preferential attachment networks are bigger by an asymptotic factor of \smash{$(1+2\alpha)/{(1+\alpha)}$}, which converges to two as $\alpha\uparrow\infty$. 

\section{Statement of the main results}

\noindent
Our main result concerns the variant of the preferential attachment model introduced in \cite{dereich_random_2009}, which has the advantage over other variants
of remaining tractable  even when the connection probability is a nonlinear function of the degree of the older vertex. To define the model precisely,
fix a concave function $f \colon \NN_0 \to (0,\infty)$, which is called the \emph{attachment rule}, and define a sequence of random graphs $(\Gg_N)_{N\in\NN}$ in the following way:
\begin{enumerate}
	\item The initial graph $\Gg_1$ is a single vertex labelled $1$.
	\item Given $\Gg_N$, the graph $\Gg_{N+1}$ is obtained by
		\begin{itemize}
			\item adding a new vertex labelled $N+1$;
			\item independently for any vertex with label $m\leq N$ insert an edge between this vertex and the new vertex with probability
			$$\frac{f(\Zz[m,N])}{N}\wedge 1,$$ 
			where $\Zz[m,N]:=\sum_{i=m+1}^N\1\{m \leftrightarrow i\}$ is the number of younger vertices connecting to~$i$ in $\Gg_N$. 
	\end{itemize}
\end{enumerate}\pagebreak[3]
If we orient all edges from the younger to the older vertex we can interpret  $\Zz[m,N]$ as the \emph{indegree} of the vertex labelled~$i$ in 
the oriented graph derived from $\Gg_N$.  Note however that throughout this paper we consider the graphs $\Gg_N$ as unoriented and the notions of connectivity and graph distance~$d_N$ taken in $\Gg_N$ are with reference to unoriented edges. For any potential edge $(v,w)\in [N]^2$ with $v<w$ we write $v\leftrightarrow w$ if we wish to indicate that $(v,w)$ is contained in $\Gg_N$. When it is convenient to stress the original orientation we write $v\leftarrow w$ or $w\rightarrow v$.
\medskip

The following theorem identifies the class of attachment rules which produces typical distances of order ${\log N}/{\log\log N}$. It is the main result of this paper.
\begin{theorem}\label{thm:PA}
Let $(\Gg_N)_{N\in\NN}$ be the sublinear preferential attachment model obtained from a concave attachment rule $f$ satisfying
\begin{equation}\label{eq:fcondition}
f(k)=\frac{1}{2}k+\frac{\alpha}{2}\frac{k}{\log k}+o\Big(\frac{k}{\log k}\Big),
\end{equation}
for some $\alpha> 0.$ Consider two vertices $U,V$ chosen independently and uniformly at random from the largest connected component $\Cc_N\subset\Gg_N$, then $$d_N(U,V)=\Big(\frac{1}{1+\alpha}+o(1)\Big)\frac{\log N}{\log\log N}\ \ \textrm{ with high probability as }N\to\infty.$$
\end{theorem}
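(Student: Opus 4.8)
The plan is to prove matching upper and lower bounds on $d_N(U,V)$, the common input being sharp control of the indegree evolution $N\mapsto\Zz[m,N]$. This process is a time-inhomogeneous generalised P\'olya urn, and from the drift relation $\Zz[m,N+1]-\Zz[m,N]\approx f(\Zz[m,N])/N$ together with the identity $\Zz/f(\Zz)=2\bigl(1-\alpha/\log\Zz+o(1/\log\Zz)\bigr)$ implied by \eqref{eq:fcondition} (which places the model exactly at the critical slope $\tfrac12$), one obtains $\EE[\Zz[m,N]]\asymp (N/m)^{1/2}\bigl(\log(N/m)\bigr)^{\alpha}$; the logarithmic factor is precisely the new feature of the critical window. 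I would then prove concentration sharp enough to preserve this $(\log)^{\alpha}$ correction --- say $\Zz[m,N]=(1+o(1))\EE\Zz[m,N]$ in probability, uniformly over $m\le N^{1-\eta}$, plus moment bounds for larger $m$ --- controlling the $o(k/\log k)$ error in $f$ uniformly in $k$. The key corollary is a two-sided estimate for connection probabilities: for vertices with labels $x<y$,
\[
\PP(x\leftrightarrow y)\ \asymp\ \frac{\bigl(\log(y/x)+1\bigr)^{\alpha}}{\sqrt{xy}}\,.
\]
This carries only \emph{one} logarithmic factor, because the edge $\{x,y\}$ is sampled at the birth of the younger vertex $y$, when only $x$'s accumulated indegree is relevant; in a rank-one model both endpoints' weights enter and one gets the square of such a factor. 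This single difference is ultimately responsible for the dichotomy $\tfrac1{1+\alpha}$ versus $\tfrac1{1+2\alpha}$.

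For the lower bound I would use a first-moment count of self-avoiding paths. On a high-probability event on which all indegrees lie within constant factors of their means, the edges are conditionally independent given the degree evolution and the estimate above holds as an upper bound, so the expected number of self-avoiding $U$--$V$ paths of length $d$ is at most
\[
\frac{1}{\sqrt{\ell_0\,\ell_d}}\ \sum_{\ell_1,\dots,\ell_{d-1}\in[N]}\ \prod_{i=0}^{d-1}\frac{\bigl(|\log\ell_{i+1}-\log\ell_i|+1\bigr)^{\alpha}}{\sqrt{\ell_i\,\ell_{i+1}}}\,,
\]
with $\ell_0,\ell_d$ the labels of $U,V$. Bounding each log-factor crudely by $(2\log N)^{\alpha}$ and using $\sum_{\ell\le N}1/\ell\asymp\log N$ and $\ell_0,\ell_d=\Theta(N)$ (as holds for typical vertices of $\Cc_N$), this is $O\bigl((2\log N)^{d(\alpha+1)}/(N\log N)\bigr)$, which tends to $0$ even after summing over all $d\le(\tfrac1{1+\alpha}-\eps)\tfrac{\log N}{\log\log N}$; hence whp no such path exists, so $d_N(U,V)\ge(\tfrac1{1+\alpha}-o(1))\tfrac{\log N}{\log\log N}$. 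Notably this crude bound already produces the correct constant; the finer identity $\sum_{\ell\le N}(\log(N/\ell))^{\alpha}/\ell\asymp(\log N)^{\alpha+1}/(\alpha+1)$ is what the matching bound requires.

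For the upper bound I would route through a core $\Cc^{*}=\{1,\dots,\lceil(\log N)^{A}\rceil\}$ with $A\in(0,2\alpha+1)$ fixed, in two steps. (i) $\Cc^{*}$ has diameter $O(1)$ whp: any two vertices $x,y\in\Cc^{*}$ are joined by a path of length two through a common \emph{younger} neighbour, because $\EE[\#\{\text{common younger neighbours of }x,y\}]\asymp \Zz[x,N]\,\Zz[y,N]\,\log\bigl(\Zz[x,N]\wedge\Zz[y,N]\bigr)/N\gg1$ on $\Cc^{*}$, these events being conditionally independent over the potential common neighbours, so a union bound over the $O((\log N)^{2A})$ pairs closes. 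This length-two ``hub through a young vertex'' step is the mechanism peculiar to preferential attachment. (ii) Each of $U,V$ lies within graph distance $(\tfrac1{2(1+\alpha)}+\eps)\tfrac{\log N}{\log\log N}$ of $\Cc^{*}$ whp; I would prove this by a second-moment (Paley--Zygmund) argument, showing that the expected number of length-$d$ paths from $U$ to $\Cc^{*}$ is at least a constant times $|\Cc^{*}|^{1/2}(\log N)^{d(\alpha+1)}c_{\alpha}^{d}/(\sqrt N\log N)$ for a constant $c_{\alpha}\in(0,1)$ --- the denominator carries only $\sqrt N$ because one endpoint sits at a polylogarithmic label, which is exactly why reaching the core costs only \emph{half} the eventual distance --- and that this tends to $\infty$ once $d>(\tfrac1{2(1+\alpha)}+\eps)\tfrac{\log N}{\log\log N}$. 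Combining, $d_N(U,V)\le d_N(U,\Cc^{*})+\mathrm{diam}(\Cc^{*})+d_N(\Cc^{*},V)\le(\tfrac1{1+\alpha}+o(1))\tfrac{\log N}{\log\log N}$.

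The main obstacle is the second-moment estimate. In a rank-one model, conditioning on the weights makes the edges independent and the path count concentrates essentially for free; in the preferential attachment graph the edges are independent only given the \emph{whole} indegree evolution, and the shared indegree processes induce positive correlations between distinct paths that the second moment must be shown not to amplify. The standard remedy --- restricting the path ensemble to moderately old, ``typical'' vertices (the extreme hubs being dealt with separately through $\Cc^{*}$) and exploiting the recursive structure by which $\Gg_N$ restricted to $[m]$ is controllable in terms of $\Gg_m$ --- makes this work but is technically heavy. A second, pervasive difficulty is that the critical scaling forces every estimate (on indegrees, connection probabilities, neighbourhood growth) to be carried out with logarithmic precision, since the $(\log N)^{\alpha}$ corrections that fix the constant are themselves only of logarithmic order; this is what makes the argument genuinely sharper than in the already-understood ultrasmall regime $\tau<3$.
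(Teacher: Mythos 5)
Your overall architecture coincides with the paper's: a first-moment path count for the lower bound based on $\PP(x\leftrightarrow y)\asymp(\log(y/x)\vee 1)^{\alpha}/\sqrt{xy}$, and an upper bound that routes through a polylogarithmically sized core of old vertices, reached in roughly $\frac{1}{2(1+\alpha)}\frac{\log N}{\log\log N}$ steps by a second-moment neighbourhood-growth argument, the core itself having bounded diameter via length-two connections through young vertices. The lower bound is essentially right, though your phrase ``the edges are conditionally independent given the degree evolution'' needs repair: given the evolutions the edges are deterministic, and what one actually uses is independence of the evolutions of distinct vertices together with the fact that a self-avoiding path visits each evolution at most twice; the resulting two-jump events $\PP(u\to v\leftarrow w)$ are decoupled by a stochastic domination argument (conditioning a concave-$f$ evolution on an earlier jump only pushes it up), yielding a bounded per-edge correlation constant. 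Conditioning instead on a high-probability degree event distorts the measure in the wrong direction for an upper bound on path probabilities.

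There are two genuine gaps in the upper bound. First, you cannot take the core to be all of $\{1,\dots,\lceil(\log N)^{A}\rceil\}$: for a fixed old vertex $x$ the ratio $\Zz[x,N]/\sqrt{N/x}$ converges to a nondegenerate random limit that is small with positive probability, so a positive fraction of these labels have degree far below the mean, your common-younger-neighbour estimate fails for those pairs, and the union bound cannot close. The repair is to define the core as the random subset of the oldest $M_N$ vertices whose realized degree exceeds half its mean, show by Paley--Zygmund and independence of the evolutions that this subset has proportional size, and establish its bounded diameter through connector vertices born after time $N_\eps$, chosen so that the sprinkled edges are independent of the event determining the core. Second, and more seriously, your step (ii) asserts the Paley--Zygmund path-count estimate without supplying the mechanism that makes it work, and this is where essentially all of the difficulty of the theorem sits: distinct paths sharing an old vertex are positively correlated through that vertex's degree evolution, the second moment of a naive path count is not obviously comparable to the squared first moment, and the error terms carry exactly the $(\log N)^{\alpha}$ factors that determine the constant. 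The paper replaces the path count by a truncated breadth-first exploration whose score $\sum_v\xi(v,N)$ over active vertices is shown to grow by a factor of order $(\log N)^{\alpha+1}$ per generation, with concentration from a lower-tail inequality for independent sums, explicit truncation levels $\ell_k$ keeping the conditional jump probabilities within constant factors of the unconditional ones, inclusion--exclusion to control collisions, and an odd/even vertex split to decouple the explorations started from $U$ and from $V$. You correctly identify this as the main obstacle, but the proposal does not contain the argument that overcomes it.
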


{
\noindent
The lower bound in \cref{thm:PA} uses a standard path counting argument and first moment bounds. The upper bound is much more difficult to obtain and we use a rather complicated second moment argument for the size of the neighbourhood of a typical vertex and combine it with a result concerning a dense subgraph among the oldest vertices using sprinkling-type arguments.
}
\bigskip

It is shown in~\cite{dereich_random_2009} that the asymptotic degree distribution in the preferential attachment graph $\Gg_N$ 
with the attachment rule given in~\eqref{eq:fcondition} satisfies
\begin{equation}\label{empi}
\frac1N \sum_{v\in\Gg_N} \1\{ \mbox{degree}(v)\geq k\} =  k^{-2} (\log k)^{2\alpha + o(1)}
\quad \mbox{ in probability.}
\end{equation}

{This can be seen as follows. According to \cite[Theorem 1.1.]{dereich_random_2009}, the asymptotic indegree distribution in $\Gg_N$ is explicitly given by \begin{equation}\label{eq:degdistri}
\mu_k = \frac{1}{1+f(k)}\prod_{j=0}^{k-1}\frac{f(j)}{1+f(j)} \textrm{ for }k\in\N\cup\{0\},
\end{equation}
whereas the outdegree is asymptotically Poisson distributed. Choosing an affine attachment rule $f(k)= \gamma k+\beta$, one obtains from \eqref{eq:degdistri} by use of Stirling's formula,
$$\mu_k = O\big(k^{-1+\frac1\gamma}\big),$$
cf. \cite[Example 1.3]{dereich_random_2009}. This illustrates that the network is a small world for $\gamma<1/2$ and ultrasmall if $\gamma>1/2,$ since for affine $f$ the power law tails of the indegree distribution dominate the exponential tails of the outdegree distribution. Fixing $\gamma=1/2$ and adding a logarithmically decaying perturbation into the linear factor, i.e. $$f(k) = \Big(\frac12 + \frac{\alpha}{2 \log k}\Big) k, \text{ for } k\geq 2,$$ yields, using the Taylor expansion of $\log(\cdot)$,
$$
\log \Big(\frac{f(j)}{1+f(j)}\Big)= -\frac{2}{j}+\frac{2\alpha }{j\log j} + \frac{2}{j^2} + O\Big(\frac{1}{j (\log j)^2}\Big),
$$
for large $j\in\N.$ Note that the latter two terms are summable in $j$ whereas the first two terms are not. Hence, \eqref{eq:degdistri} implies that 
$$\log \mu_k = -3 \log k + 2\alpha\log\log k +O(1), \text{ as } k\to \infty,$$
since $\sum_{j=1}^k j^{-1} \approx \log k$ and $ \sum_{j=2}^k j^{-1}(\log j)^{-2}\approx \log\log k.$ Noting that the left hand side of \eqref{empi} converges to 

$\sum_{j=k}^\infty \mu_j$ 
one obtains the asserted scaling. The same derivation together with a somewhat tedious but straightforward analysis of the lower order terms appearing yields \eqref{empi} for the more general shapes of $f$ given in \eqref{eq:fcondition}.
\medskip

The calculation of the last paragraph also explains our particular choice of attachment rule. At the critical point $\tau=3$ (or $\gamma=1/2$), the scale of the typical distances is rather sensitive to the parameters of the network model under consideration. We limit ourselves in \cref{thm:PA} to those $f$ which change precisely the factor in front of the $\log N/\log \log N$ term obtained in \cite{BR04} to illustrate the emergence of the characteristic factor $2$ that separates distances in preferential attachment models from distances in rank-1-models in the ultrasmall regime. Note that in \cite{BR04} the authors rely on the equivalence of certain instances of the Barab\'asi to another combinatorial model making it very challenging to adapt their arguments to the regime we are interested in.
\medskip

In principle, it is possible to obtain distances on a variety of scales between $\log N$ and $\log\log N$ other than $\log N/\log\log N$ at $\tau=3$. One may be able to reverse engineer the correct attachment function and then give a rigorous proof along the same lines as ours. We have refrained from doing so, since many of our calculations use explicit estimates and are not straightforwardly generalisable. A formula relating the typical distance explicitly to $f$ or to the degree sequence $(\mu_k)_{k\geq 0}$, as it can be given for rank-1-models, see e.g. \cite{chung2003}, seems presently out of reach for nonlinear preferential attachment models.
}\medskip

We contrast the result of Theorem~\ref{thm:PA} on typical distances in the \emph{preferential attachment model} with a result 
on typical distances in the \emph{Norros-Reittu model} with an i.i.d.\ weight sequence parametrised to obtain the same tail 
behaviour of the empirical degree distribution. We choose this model for definiteness but the result extends easily to other 
rank-one models, such as the Chung-Lu model, and to deterministic weight sequences with similar asymptotics.%
\bigskip%

To define the model, given a distribution on 
the positive reals we generate a sequence $W=(W_i)_{i=1}^{\infty}$ 
of i.i.d.\ random variables with this distribution. Let $L_N=
\sum_{n=1}^NW_n$ denote the total weight of the vertices in $[N]$. 
For fixed $N$ and given the weights $W_1,\ldots, W_N$ we construct the random graph $\mathcal{H}_N=\mathcal{H}_N(W)$ with vertex set $[N]$ as follows:
\begin{itemize}
\item Between any two distinct vertices $v,w\in[N]$ the number of edges is Poisson
distributed with parameter $W_vW_w /L_N$, independent of all other edges.
\item Parallel edges are merged to obtain a simple graph.
\end{itemize}
\medskip
\begin{theorem}\label{thm:CM}
Let $(\mathcal{H}_N)_{N\in\N}$ denote the Norros-Reittu model with weight distribution satisfying
\begin{equation}\label{eq:CMcondition}
\PP( W_1\geq k) =k^{-2}(\log k)^{2\alpha+o(1)},\end{equation} 
for $\alpha>0.$ Consider two vertices $U,V$ chosen independently and uniformly at random from the largest connected component $\Cc_N\subset\mathcal{H}$, then $$d_N(U,V)=\Big(\frac{1}{1+2\alpha}+o(1)\Big)\frac{\log N}{\log\log N}\ \ \textrm{ with high probability as }N\to\infty.$$
\end{theorem}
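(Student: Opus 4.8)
\emph{Overall strategy.} The plan is to run the standard two-sided scheme for typical distances in rank-one models — a path-counting first moment bound for the lower bound, and a breadth-first neighbourhood expansion combined with an argument linking up the heaviest vertices for the upper bound — the new feature being that at the critical value $\tau=3$ the natural branching-process approximation has \emph{infinite mean}, so that neighbourhood sizes fail to concentrate. Write $\mu:=\EE[W_1]\in(0,\infty)$, which is finite by~\eqref{eq:CMcondition}, and set $c_N:=\frac{1}{1+2\alpha}\frac{\log N}{\log\log N}$. First I would record a weight lemma: from~\eqref{eq:CMcondition}, with high probability $L_N=(1+o(1))\mu N$, $\max_{i\le N}W_i=N^{1/2+o(1)}$, and for every $M=N^{\Theta(1)}$ one has $\frac1N\sum_{i\le N}(W_i\wedge M)^2=(\log M)^{1+2\alpha+o(1)}$; these follow by elementary tail integration, using Markov's inequality (and a crude variance bound after truncating the weights at level $N^{1/2+o(1)}$) for the concentration. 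In particular $\nu_N:=\frac{1}{\mu N}\sum_{i\le N}W_i^2=(\log N)^{1+2\alpha+o(1)}$ with high probability, the exponent being $1+2\alpha$ and not $2\alpha$ because $\int^{M}(\log y)^{2\alpha}\,y^{-1}\,\mathrm{d}y\asymp(\log M)^{1+2\alpha}$, i.e.\ every dyadic weight scale up to $\max_iW_i$ contributes comparably to $\sum_iW_i^2$. For the lower bound, $1-\e^{-x}\le x$ together with summation over walks bounds the expected number of length-$\ell$ paths between two independently uniformly chosen vertices of $\Cc_N$ by $O(\nu_N^{\,\ell-1}/N)$ on the good event, which is $N^{-\eps+o(1)}$ for $\ell\le(1-\eps)c_N$; a union bound over the at most $c_N$ admissible values of $\ell$ and Markov's inequality then give $d_N(U,V)\ge(1-\eps)c_N$ with high probability.

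\emph{The heavy core.} For the upper bound I fix $\eps>0$, set $\ell_N:=(\tfrac12+\eps)c_N$, and introduce $\Cc^*_N:=\{\,i\le N:W_i\ge N^{1/2}(\log N)^{\alpha/2}\,\}$. With high probability $|\Cc^*_N|=(\log N)^{\alpha+o(1)}\to\infty$, and since two vertices of such weight are joined with probability $1-\e^{-\Theta((\log N)^{\alpha})}$, a union bound over the $(\log N)^{2\alpha+o(1)}$ pairs shows $\Cc^*_N$ is a clique with high probability. Elementary moment computations also show that, conditionally on a breadth-first exploration whose current frontier has total weight at least $N^{1/2}(\log N)^{-\alpha}$, the next step reveals an edge to a vertex of weight at least $N^{1/2}(\log N)^{-\alpha/2}$ with probability $1-o(1)$, and that every vertex of weight at least $N^{1/2}(\log N)^{-\alpha/2}$ is adjacent to $\Cc^*_N$ with high probability — the relevant expected edge-counts all being $(\log N)^{\Theta(1)}$. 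Together these give $d_N(v,\Cc^*_N)\le k+3$ with high probability for any $v$ whose breadth-first frontier at some level $k$ has total weight at least $N^{1/2}(\log N)^{-\alpha}$, and $\mathrm{diam}(\Cc^*_N)\le1$.

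\emph{Neighbourhood growth.} The core of the argument is to show that, with high probability, all but $o(|\Cc_N|)$ vertices $v$ of the giant have a breadth-first frontier of total weight $Z_k(v):=\sum_{u\in\partial_k(v)}W_u\ge N^{1/2}(\log N)^{-\alpha}$ at some level $k\le\ell_N$; applying this to $U$ and $V$ and combining with the previous paragraph yields $d_N(U,V)\le 2\ell_N+O(1)=(1+2\eps+o(1))c_N$ with high probability, and letting $\eps\downarrow0$ finishes the proof. To control $Z_k(v)$, note that conditionally on $\partial_k(v)$ and the weights each unexplored vertex $u$ joins $\partial_{k+1}(v)$ with probability $1-\e^{-Z_k(v)W_u/L_N}$, independently over $u$ (this is where the Poisson edge structure of the Norros–Reittu model is convenient); as long as $Z_k(v)\le N^{1/2}(\log N)^{-\alpha}$ the probability of an edge back into the already-explored vertices (of total weight $\le 2Z_k(v)$) is $o(1)$, and the exploration couples with the branching process in which a weight-$w$ individual has $\mathrm{Poisson}(w)$ children of i.i.d.\ size-biased weights $\widehat W$ with $\PP(\widehat W\ge y)=(\log y)^{2\alpha+o(1)}/y$. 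For this process the heaviest of the $\approx Z_k$ children of the frontier has weight of order $Z_k(\log Z_k)^{2\alpha}$, while each dyadic weight scale $y$ below it contributes $\approx Z_k(\log y)^{2\alpha}$ to $Z_{k+1}$, so summing over the $\approx\log Z_k$ scales gives $Z_{k+1}\asymp Z_k(\log Z_k)^{1+2\alpha}$; equivalently $L_{k+1}=L_k+(1+2\alpha)\log L_k+O(1)$ for $L_k:=\log Z_k$, whence $L_k\sim(1+2\alpha)k\log k$ and the level at which $Z_k$ first reaches $N^{1/2-o(1)}$ is $(\tfrac12+o(1))c_N\le\ell_N$. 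To make the lower bound on $Z_{k+1}$ rigorous one keeps only the children of weight at most $\sqrt{Z_k}$: their total weight is a compound Poisson sum with conditional mean $\asymp Z_k(\log Z_k)^{1+2\alpha}$ and relative variance $O(Z_k^{-1/2})$, so once $Z_k\ge(\log N)^5$ the step bound $L_{k+1}\ge L_k+(1-\eps)(1+2\alpha)\log L_k$ fails with probability $o(c_N^{-2})$, and these $\le\ell_N$ events can be chained. The initial ``ignition'' — raising $Z_k$ from $W_v=O(1)$ to $(\log N)^5$ — costs only $O(\log\log N/\log\log\log N)=o(c_N)$ further steps on the event that the coupled branching process survives; since the survival probability equals the asymptotic density $|\Cc_N|/N$ of the giant, this event has asymptotically full probability for $v$ uniform in $\Cc_N$, apart from $o(|\Cc_N|)$ exceptional vertices which are dealt with as in the finite-variance regime $\tau>3$.

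\emph{Main obstacle.} The hard part is this neighbourhood growth step: because the branching process has infinite mean, $Z_k$ genuinely fails to concentrate — its conditional variance is dominated by the rare event of attaching to an almost-maximal-weight vertex — so a direct second moment bound on $Z_{k+1}$ is useless, and the workaround is to apply the second moment argument only to the moderate-weight offspring and to track the logarithm $L_k=\log Z_k$, whose increments, of size $(1+2\alpha)\log L_k$, do concentrate. One can view this as the $\tau=3$ degeneration of the regime $\tau\in(2,3)$, where instead $L_{k+1}=(\tau-2)^{-1}L_k+O(1)$ grows geometrically and yields doubly-logarithmic distances; at $\tau=3$ the multiplicative part drops out and the logarithmic correction governs the growth, giving $L_k\asymp k\log k$ and hence distances on the scale $\log N/\log\log N$. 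Identifying the constant as $1+2\alpha$ — rather than $2\alpha$, which a naive ``follow the heaviest neighbour'' path would give — is precisely where it is essential that all weight scales below the current frontier weight contribute comparably to its growth.
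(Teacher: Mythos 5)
Your proposal is correct and follows essentially the same route as the paper's proof: a first-moment path count with $\nu_N=(\log N)^{1+2\alpha+o(1)}$ for the lower bound, and for the upper bound a truncated breadth-first exploration whose frontier weight grows by a factor $(\log Z_k)^{1+2\alpha}$ per step (variance controlled by discarding the heavy offspring), combined with a polylogarithmic core of heaviest vertices of bounded diameter and local weak convergence for the ignition phase. The only differences are cosmetic — you truncate adaptively at $\sqrt{Z_k}$ where the paper uses a deterministic superexponential sequence $w_k$, and your core is a clique rather than a bounded-diameter Erd\H{o}s--R\'enyi-coupled set — and both yield the same constant $\tfrac{1}{1+2\alpha}$.
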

\bigskip

\noindent
We observe that the characteristic difference in the typical distances between preferential attachment models and rank-one models in the ultrasmall
regime does not occur suddenly at the phase transition, but arises gradually in a critical window.  For networks with empirical degree distributions decaying as in~\eqref{empi} there is a factor of \smash{${(1+2\alpha)}{(1+\alpha)}$} between the typical distances  in the two types of networks. This factor converges to two as we approach the ultrasmall regime by letting $\alpha\uparrow\infty$, and converges to one as we approach the linear case by 
letting $\alpha\downarrow 0$. 
%
A heuristic explanation for this transition is that in the preferential attachment model in the critical window the probabilities that two vertices of high indegree are connected directly
or via a young connector vertex are on the same scale. Hence the asymptotical proportion of the transistions between vertices  on a typical short path that use a connector,
is a constant strictly between zero and one. This constant turns out to be 
${\alpha}/({1+\alpha})$ and this yields a factor $1+{\alpha}/({1+\alpha})$ by which
the length of shortest paths in the preferential attachment model exceed that in the rank-one models.
\medskip

Qualitatively different behaviour for the preferential attachment and rank-one model class can also be observed when studying robustness of the giant component
under targeted attack, see Eckhoff and M\"orters~\cite{Maren}, or in the behaviour of the size of the giant component near criticality, see forthcoming work of Eckhoff, M\"orters and Ortgiese~\cite{EMO16}.

\section{Proof of lower bounds -- preferential attachment}

Lower bounds for average distances are proved using a first moment method. To set it up, Section~3.1 provides bounds for expected degrees in the preferential attachment model, which are used in Section~3.2 to prove the lower bound in \cref{thm:PA}.

\begin{nrmk}
 In all subsequent sections a subscript number on a constant refers to the place where it is defined, e.g. $C_{1.23}$ is the constant introduced in Lemma 1.23., $C_{(1.24)}$ the same constant as in equation $(1.24)$, etc.
\end{nrmk}
\subsection{Degree asymptotics for preferential attachment}\label{sec:moments}

It follows immediately from the definition of the preferential attachment graph, that the network is entirely represented by the collection $(\Zz[1,n])_{n\geq 1},(\Zz[2,n])_{n\geq 2},\dots$ of independent Markov chains, which we refer to as \emph{degree evolutions}. In this section, we derive lower and upper bounds for $\EE f(\Zz[m,n]).$ For conciseness in the formulation of later results, we allow $(\Zz[m,n])_{n\geq m}$ to start in any integer $k\in\NN$ and denote the resulting distribution 
by~$\PP^k$, its expectation by $\EE^k$.
\begin{lem}\label[lem]{lem:degmart2}
Let $k,m\in\NN$, $\Zz[m,m]=k$ be fixed and define, for $n\geq m$,
\begin{equation*}
X(n)=\frac{f(\Zz[m,n])}{\xi(m,n)}\ \ \text{and}\ \ Y(n)=\frac{f(\Zz[m,n])^2+\frac12 f(\Zz[m,n])}{\frac{n}{m}},
\end{equation*} where $\xi(m,n)$ is given by $$\xi(m,n)=\prod_{i=m}^{n-1}\Big(1+\frac{1}{2i}\Big) =\frac{\Gamma(n+\frac12)\Gamma(m)}{\Gamma(m+\frac12)\Gamma(n)}.$$ Then $X=(X(n))_{n\geq m}$ and $Y=(Y(n))_{n\geq m}$ are submartingales. If $f$ is affine, then they are martingales.
\end{lem}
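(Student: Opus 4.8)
The plan is to use the Markovian structure of the degree evolution to reduce the (sub)martingale property of both processes to the single pointwise inequality $f(j+1)-f(j)\ge\tfrac12$ for all $j\ge0$, and then to extract that inequality from concavity together with the leading-order behaviour prescribed by \eqref{eq:fcondition}. I would first fix the filtration $\mathcal F_n=\sigma(\Zz[m,j]\colon m\le j\le n)$ and recall that, under $\PP^k$, the chain $(\Zz[m,n])_{n\ge m}$ has unit increments with conditional probability $\PP^k(\Zz[m,n+1]=\Zz[m,n]+1\mid\mathcal F_n)=f(\Zz[m,n])/n$; I work throughout in the regime $f(\Zz[m,n])\le n$, the only one relevant to the later applications, so the truncation in the model definition is inactive, and since $f$ is nondecreasing with $f(j)\sim j/2$ this holds automatically once $n$ is large. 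Because $0\le\Zz[m,n]\le n-m$, both $X(n)$ and $Y(n)$ are bounded and $\mathcal F_n$-measurable, so only the drift needs checking, and for any $h\colon\NN_0\to\R$ one has
$$\EE^k\big[h(\Zz[m,n+1])\,\big|\,\mathcal F_n\big]=h(Z)+\tfrac{f(Z)}{n}\big(h(Z+1)-h(Z)\big),\qquad Z:=\Zz[m,n].$$

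Taking $h=f$ and using $\xi(m,n+1)=\xi(m,n)\big(1+\tfrac1{2n}\big)$, this gives
$$\EE^k\big[X(n+1)\,\big|\,\mathcal F_n\big]=X(n)\,\frac{1+(f(Z+1)-f(Z))/n}{1+1/(2n)},$$
so $X$ is a submartingale exactly when $f(j+1)-f(j)\ge\tfrac12$ for every $j$, and a martingale when this increment equals $\tfrac12$ identically, i.e.\ for affine $f$ (whose slope is then necessarily $\tfrac12$, consistently with the normalisation in $\xi$).

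To establish the pointwise bound I would argue as follows. Concavity of $f$ says precisely that the increments $a_j:=f(j+1)-f(j)$ are non-increasing; they are bounded below, since otherwise the partial sums, and hence $f$ itself, would tend to $-\infty$, so they converge to a finite limit $L$ and $a_j\ge L$ for all $j$. Averaging, $f(k)/k=\big(f(0)+\sum_{j<k}a_j\big)/k\to L$, while \eqref{eq:fcondition} forces $f(k)/k\to\tfrac12$; hence $L=\tfrac12$, so $a_j\ge\tfrac12$ for every $j$, with equality throughout if and only if $f$ is affine of slope $\tfrac12$. This settles the martingale assertion for $X$.

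Finally I would run the same computation for $Y$. Writing $g(j):=f(j)^2+\tfrac12 f(j)$, so that $Y(n)=\tfrac mn g(\Zz[m,n])$, the one-step identity applied to $h=g$ simplifies to
$$\EE^k\big[Y(n+1)\,\big|\,\mathcal F_n\big]-Y(n)=\frac{m}{n(n+1)}\Big(f(Z)\big(g(Z+1)-g(Z)\big)-g(Z)\Big).$$
Since $g(j+1)-g(j)=a_j\big(f(j)+f(j+1)+\tfrac12\big)$, dividing the bracket by $f(j)>0$ reduces the submartingale property to $a_j\big(2f(j)+a_j+\tfrac12\big)\ge f(j)+\tfrac12$, and with $a_j\ge\tfrac12$ this is immediate from $a_j\big(2f(j)+a_j+\tfrac12\big)\ge\tfrac12\big(2f(j)+a_j+\tfrac12\big)=f(j)+\tfrac{a_j}{2}+\tfrac14\ge f(j)+\tfrac12$, with equality precisely when $a_j=\tfrac12$, i.e.\ for affine $f$. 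The one genuinely load-bearing ingredient is the bound $a_j\ge\tfrac12$: it is what ties the arithmetic weights $\xi(m,n)$ and the correction term $\tfrac12 f$ in the definition of $Y$ to the critical slope appearing in \eqref{eq:fcondition}. I expect verifying it, along with the bookkeeping in the identity for $Y$ and the (immaterial) truncation at small $n$, to be the only steps that are not purely mechanical.
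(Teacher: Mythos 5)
Your proposal is correct and follows essentially the same route as the paper: a one-step conditional expectation computation that reduces both (sub)martingale properties to the pointwise increment bound $f(j+1)-f(j)\ge\tfrac12$, which in turn comes from concavity plus the asymptotic slope $\tfrac12$. You are in fact slightly more careful than the paper, which asserts $\Delta f(i)>\tfrac12$ for strictly concave $f$ without the Ces\`aro argument you supply.
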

\begin{proof}
Fix $n\geq m$ and let $\Delta \Zz[m,n]=\Zz[m,n+1]-\Zz[m,n]$. The martingale property of $X$ for an affine attachment rule $f(x)=\frac12 x+\beta$ follows immediately from 
	\begin{equation}	
	\begin{aligned}\label{eq:derivemart}
		\EE^k \big[f(\Zz[m,n+1]) \big| \Zz[m,n] \big]
		&=\EE^k \big[f(\Zz[m,n])+\sfrac12\1{\{n+1\rightarrow m\}} \big|\Zz[m,n] \big]\\
		&=\EE^k \big[f(\Zz[m,n])|\Zz[m,n] \big]+\sfrac12\EE\big[\sfrac{f(\Zz[m,n])}{n}\big|\Zz[m,n]\big]\\
		&=\big( 1+\sfrac{1}{2n}\big) f(\Zz[m,n]).
	\end{aligned}
	\end{equation}
The corresponding calculation for $Y$ is performed in complete analogy to \eqref{eq:derivemart}, we obtain
	\begin{equation*}
		\EE\big[ f(\Zz[m,n+1])^2 \big|\Zz[m,n] \big]
=\big(1+\sfrac{1}{n}\big) f(\Zz[m,n])^2+\sfrac{1}{4n}f(\Zz[m,n]),
	\end{equation*}
and thus
	\begin{align*}
		\sfrac{n+1}{m}\EE\big[ Y(n+1) \big|\Zz[m,n] \big]
=&\;\big(1+\sfrac{1}{n}\big) f(\Zz[m,n])^2+\sfrac{1}{4n}f(\Zz[m,n])+\sfrac12\big(1+\sfrac{1}{2n}\big) f(\Zz[m,n])\\
		=&\;\big(1+\sfrac{1}{n}\big)\sfrac nm Y(n).
	\end{align*}
Division by $(1+n^{-1})n/m=(n+1)/m$ now yields the martingale property. 
For strictly concave $f$, we have $\Delta f(i)=f(i+1)-f(i)>\frac12$, for all $i\in \NN$, and the equalities in the  above calculations turn into inequalities yielding the submartingale property.
\end{proof}

By \cref{lem:degmart2}, for all $n\geq m\in\NN$ and $k\in\NN$, \begin{equation}\label{eq:xitoroot}\xi(m,n)=\prod_{i=m}^{n-1}\Big(1+\frac{1}{2i}\Big)\in\Big[\sqrt{\frac nm}, (1+\delta(m))\sqrt{\frac nm}\Big],\end{equation} where $\delta(m)$ can be chosen such that $\lim_{m\to\infty}\delta(m)=0.$ In the affine case $\xi(m,n)=\frac1{f(k)}\, {\EE^k f(\Zz[m,n])}$, in particular
the \emph{score} $\xi(m,N)$ of a vertex $m$ is asymptotically proportional to its expected degree at time $N$. For the deviation from the affine case we introduce the notation \begin{equation}\label{eq:PSIDEF}\psi^k(m,n):=\frac{\EE^k f(\Zz[m,n])}{\xi(m,n)}.\end{equation} 
%
%
Determining the magnitude of $\psi^k$ is the first step towards the proof of \cref{thm:PA}.  As we will see later, it suffices to study the special case 
\begin{equation}\label{eq:glimitalpha}
f(k)=\frac{k}{2}+\frac{\alpha}{2}\frac{k}{\log (k \vee \e)} + \beta, \qquad \mbox{ for } k\geq 0,
\end{equation} with $\alpha\geq 0$ and $\beta=f(0)>0.$ 

\begin{prop}[First and second moment upper bound]\label[prop]{prop:expbounds1}
Let $f$ be an attachment rule of the form~\eqref{eq:glimitalpha}. Then, for any $k\in\NN$, there exist constants $C=C(k),C'=C'(k)$ only dependent on $\alpha$ and $\beta$, such that for all pairs $m,n\in\NN$ with $n\geq m$,
$$\EE^k f(\Zz[m,n])\leq C\sqrt{\frac{n}{m}}\big(1 \vee \log\sfrac{n}{m}\big)^{\alpha}$$
and
$$\EE^k f(\Zz[m,n])^2\leq C'\frac{n}{m}\big(1 \vee \log \sfrac{n}{m}\big)^{2\alpha}.$$
\end{prop}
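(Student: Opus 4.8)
The strategy is to exploit the submartingale structure established in \cref{lem:degmart2} together with a carefully chosen multiplicative correction that absorbs the logarithmic perturbation in $f$. Recall that for the affine part $\frac12 k + \beta$ the process $X(n) = f(\Zz[m,n])/\xi(m,n)$ is an exact martingale, so the whole difficulty comes from the extra term $\frac{\alpha}{2}\frac{k}{\log(k\vee\e)}$. The plan is to show that there is a deterministic sequence $\rho(m,n)$, growing like $(1\vee\log\frac nm)^\alpha$, such that $X(n)/\rho(m,n)$ is a supermartingale (or at least satisfies $\EE^k[X(n+1)\mid\Zz[m,n]] \le \frac{\rho(m,n+1)}{\rho(m,n)} X(n)$ up to controllable errors). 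Since $X(m) = f(k)$ is a constant, optional stopping / the supermartingale inequality then gives $\EE^k X(n) \le f(k)\,\rho(m,n)/\rho(m,m)$, which after multiplying by $\xi(m,n) \asymp \sqrt{n/m}$ yields the first-moment bound with $C = C(k)$ depending on $f(k)$ (hence on $k,\alpha,\beta$).

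The key computation is the one-step conditional increment. Writing $\Zz = \Zz[m,n]$ and using $\Zz[m,n+1] = \Zz + \one\{n+1\to m\}$ with $\PP(n+1\to m\mid\Zz) = f(\Zz)/n$, one gets
\begin{equation*}
\EE^k[f(\Zz[m,n+1])\mid\Zz] = f(\Zz) + \Delta f(\Zz)\,\frac{f(\Zz)}{n},
\end{equation*}
where $\Delta f(\Zz) = f(\Zz+1)-f(\Zz)$. For $f$ as in \eqref{eq:glimitalpha} one has $\Delta f(\Zz) = \frac12 + O\big(\frac{1}{\log(\Zz\vee\e)}\big)$, more precisely $\Delta f(\Zz) \le \frac12 + \frac{\alpha}{2\log(\Zz\vee\e)}\,(1+o(1))$. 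Hence
\begin{equation*}
\EE^k[f(\Zz[m,n+1])\mid\Zz] \le \Big(1+\frac{1}{2n}\Big) f(\Zz) + \frac{\alpha}{2n\log(\Zz\vee\e)}\,f(\Zz)\,(1+o(1)).
\end{equation*}
Now the point is that on the event where $\Zz$ is comparable to its typical size, i.e.\ $\log(\Zz\vee\e) \gtrsim \frac12\log\frac nm$, the extra factor is at most $\big(1+\frac{\alpha}{n\log(n/m)}(1+o(1))\big)$, and $\prod_{i=m}^{n-1}\big(1+\frac{\alpha}{i\log(i/m)}\big)$ telescopes (via $\sum_{i=m}^{n-1}\frac{1}{i\log(i/m)}\approx \log\log\frac nm$... wait, this needs care) — the correct bookkeeping is that $\sum \frac{1}{i\log(n/m)} = \frac{1}{\log(n/m)}\sum\frac1i \approx 1$ near $i\approx n$, and integrating over dyadic scales produces exactly a factor $(\log\frac nm)^{\alpha}$. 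So one defines $\rho(m,n)$ to be this product (or a clean deterministic upper envelope of it, e.g.\ $C\,(1\vee\log\frac nm)^\alpha$) and checks that $X(n)/\rho(m,n)$ is a supermartingale \emph{on the good event}. The second-moment bound is obtained in complete analogy from the $Y$-submartingale of \cref{lem:degmart2}: the same argument gives $\EE^k[f(\Zz[m,n+1])^2\mid\Zz] \le (1+\frac1n)f(\Zz)^2 + \frac1{4n}f(\Zz) + (\text{error of order }\frac{\alpha}{n\log(\Zz\vee\e)}f(\Zz)^2)$, so that $Y(n)/\rho(m,n)^2$ is a supermartingale on the good event, yielding $\EE^k f(\Zz[m,n])^2 \le C'\frac nm(1\vee\log\frac nm)^{2\alpha}$.

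\textbf{The main obstacle.} The supermartingale property only holds \emph{on the event that $\Zz[m,n]$ is not anomalously small}, since when $\Zz$ is of order $1$ the factor $\frac{\alpha}{2\log(\Zz\vee\e)}$ is of order $\alpha/2$ rather than of order $\frac{\alpha}{\log(n/m)}$, which is too large to be absorbed into $\rho$. So the real work is a \emph{two-regime argument}: (i) control the contribution of small-$\Zz$ trajectories, using that $\Zz[m,n]$ stochastically dominates (and is dominated by, up to constants) a P\'olya-type / Yule-process-like chain whose lower tail is exponentially small — concretely, $\PP^k(\Zz[m,n] \le \eps\sqrt{n/m})$ decays polynomially fast in $n/m$ with an exponent that can be made large by shrinking $\eps$, fast enough to kill the crude bound $f(\Zz[m,n])\le f(n) = O(n)$ on that event; and (ii) on the complementary good event run the supermartingale estimate above. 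Combining the two via $\EE^k f(\Zz[m,n]) = \EE^k[f(\Zz);\text{good}] + \EE^k[f(\Zz);\text{bad}]$ and optimising $\eps$ gives the stated bounds. The technical heart is therefore the lower-tail estimate for the degree evolution (for which one can either use the martingale $X$ itself together with a Paley–Zygmund or exponential-moment argument, or an explicit comparison with a classical preferential-attachment urn), and the verification that the resulting error terms are genuinely summable against $\frac{1}{\log(n/m)}\cdot\frac1i$. I expect the bookkeeping of constants $C(k),C'(k)$ — ensuring they depend only on $k$ (through $f(k)$), $\alpha$ and $\beta$ and not on $m,n$ — to be routine once the good-event decomposition is in place.
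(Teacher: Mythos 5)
Your starting point --- the one-step recursion $\EE^k[f(\Zz[m,n+1])\mid\Zz[m,n]]=f(\Zz[m,n])+\tfrac1n f(\Zz[m,n])\Delta f(\Zz[m,n])$ together with $\Delta f(i)\le\tfrac12+\tfrac{\alpha}{2\log f(i)}$ --- is exactly the paper's, and your hedged remark about the telescoping sum is resolved the way you suspect: $\sum_{i=m}^{n-1}\tfrac{1}{i(1\vee\log(i/m))}\approx\log\log\tfrac nm$, which exponentiates to the factor $(\log\tfrac nm)^\alpha$. The genuine gap is in your treatment of what you correctly identify as the main obstacle. Your proposed lower-tail estimate is false: $f(\Zz[m,n])/\xi(m,n)$ is an $L^2$-bounded submartingale converging to a nondegenerate limit whose law has positive mass near $0$, so $\PP^k(\Zz[m,n]\le\eps\sqrt{n/m})$ converges to a positive constant (depending on $\eps$ and $\beta$) and does \emph{not} decay in $n/m$. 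Consequently the bad-event contribution cannot be killed against the crude bound $f(\Zz[m,n])=O(n)$; and even replacing that crude bound by $f(\eps\sqrt{n/m})$ at the terminal time does not close the argument, because the supermartingale property is needed at every intermediate step $i$, so the relevant bad event is that the trajectory dips below the good region at \emph{some} $i\in\{m,\dots,n\}$ and later recovers. As sketched, the two-regime decomposition does not yield the stated bound.

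The paper sidesteps all of this with one move you should note: since $x\mapsto x/\log x$ is concave on $(\e^2,\infty)$, Jensen's inequality gives
\begin{equation*}
\EE^k\frac{f(\Zz[m,n])}{\log f(\Zz[m,n])}\;\le\;\frac{\EE^k f(\Zz[m,n])}{\log\EE^k f(\Zz[m,n])},
\end{equation*}
and the denominator on the right is then bounded below \emph{deterministically} by $\tfrac12(1\vee\log\tfrac nm)$ using the submartingale property $\EE^k f(\Zz[m,n])\ge f(k)\,\xi(m,n)\ge\sqrt{n/m}$ from \cref{lem:degmart2} and \eqref{eq:xitoroot}. In other words, concavity automatically discounts the trajectories where $\Zz$ is small (there $f(\Zz)/\log f(\Zz)$ is small in absolute terms), so no good/bad event split and no lower-tail estimate are needed; one obtains the purely deterministic recursion $\EE^k f(\Zz[m,n+1])\le\big(1+\tfrac1{2n}+\tfrac{\alpha}{n(1\vee\log(n/m))}\big)\EE^k f(\Zz[m,n])$, which iterates directly to the first claim, and the analogous recursion for $f(\cdot)^2$ (using $\Delta(f^2)(j)\le 2f(j+1)\Delta f(j)$) gives the second. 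If you want to salvage your route, replace the tail estimate by this Jensen step; otherwise you must prove a uniform-in-time control of low excursions of $\Zz[m,\cdot]$, which is substantially harder than the proposition itself.
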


\begin{prop}[First moment lower bound]\label[prop]{prop:expbounds2}
Let $f$ be as in ~\eqref{eq:glimitalpha}. Then,
there exists a constant $c>0$ only dependent on $\alpha$ and $\beta$, such that for all pairs $m,n\in\NN$ with $n\geq m$ and any $k\in\NN\cup\{0\}$
$$\EE^k f(\Zz[m,n])\geq c\, \sqrt{\frac{n}{m}}\big(1 \vee \log\sfrac{n}{m}\big)^\alpha.$$ 
\end{prop}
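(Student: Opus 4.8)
The plan is to derive the bound from an estimate for the expected indegree $z_n:=\EE^0\Zz[m,n]$ of the degree evolution started at $k=0$. Since degree evolutions are stochastically monotone in their starting value, $\EE^k f(\Zz[m,n])\ge\EE^0 f(\Zz[m,n])\ge\tfrac12 z_n$ for all $k\ge 0$, so it is enough to prove $z_n\ge c\sqrt{n/m}\,(1\vee\log\tfrac nm)^\alpha$ with $c=c(\alpha,\beta)>0$. If $n/m$ stays bounded, say $n/m\le M_0$, this is immediate from the submartingale $X$ of \cref{lem:degmart2}, which yields $\EE^0 f(\Zz[m,n])\ge f(0)\,\xi(m,n)\ge\beta\sqrt{n/m}$ and hence the claim with a constant depending on $M_0$. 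So from now on I take $n/m$ large and fix $M_0=M_0(\alpha,\beta)$ as large as the steps below demand; the same submartingale bound, together with $k\le\frac2{1+\alpha}(f(k)-\beta)$, also gives the crude a priori estimate $z_j\ge c_0\sqrt{j/m}$ with $c_0=c_0(\alpha,\beta)>0$ for all $j\ge 4m$.

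The engine is the one-step identity $z_{j+1}=z_j+\tfrac1j\EE^0\big[f(\Zz[m,j])\wedge j\big]$, combined with the fact that on a high-probability event $f(\Zz[m,j])$ dominates an \emph{affine} function of $\Zz[m,j]$ whose slope carries the logarithmic correction of $f$. Fix a large constant $B=B(\alpha)$, put $T_j:=\sqrt{j/m}\,(\log\tfrac jm)^B$ and $B_j:=\{\Zz[m,j]\le T_j\}$. On $B_j$ one has $\log(\Zz[m,j]\vee\e)\le\tfrac12\log\tfrac jm\,(1+\widetilde\varepsilon_j)$ with $\widetilde\varepsilon_j=O\big(\log\log(j/m)/\log(j/m)\big)$, so $f(\Zz[m,j])\ge\lambda_j\Zz[m,j]$ with $\lambda_j=\tfrac12+\tfrac{\alpha(1-\widetilde\varepsilon_j)}{\log(j/m)}$, and moreover $f(\Zz[m,j])\wedge j=f(\Zz[m,j])$ on $B_j$ once $M_0$ is large, because there $f(\Zz[m,j])=O(T_j)$ and $T_j/j$ is uniformly small. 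Hence
\[
z_{j+1}\ \ge\ z_j+\tfrac{\lambda_j}{j}\,\EE^0\big[\Zz[m,j]\,\1_{B_j}\big]\ =\ z_j+\tfrac{\lambda_j}{j}\Big(z_j-\EE^0\big[\Zz[m,j]\,\1_{B_j^c}\big]\Big).
\]
The exceptional term is where \cref{prop:expbounds1} is used: from $\EE^0\Zz[m,j]\le 2C\sqrt{j/m}(\log\tfrac jm)^\alpha$ and $\EE^0[\Zz[m,j]^2]\le 4C'\tfrac jm(\log\tfrac jm)^{2\alpha}$, Markov's inequality bounds $\PP^0(B_j^c)$ by $4C'(\log\tfrac jm)^{2\alpha-2B}$, and then Cauchy--Schwarz together with $z_j\ge c_0\sqrt{j/m}$ gives $\EE^0[\Zz[m,j]\,\1_{B_j^c}]\le\rho_j\,z_j$ with $\rho_j=O\big((\log\tfrac jm)^{2\alpha-B}\big)$. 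Therefore $z_{j+1}\ge z_j\big(1+\tfrac1j\lambda_j(1-\rho_j)\big)$.

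Iterating from $j_0:=\lceil M_0 m\rceil$ to $n$ and taking logarithms,
\[
\log\frac{z_n}{z_{j_0}}\ \ge\ \sum_{j=j_0}^{n-1}\log\Big(1+\frac1{2j}+\frac{\alpha}{j\log(j/m)}+r_j\Big),
\]
where $r_j$ collects the perturbations coming from $\widetilde\varepsilon_j$ and $\rho_j$. Provided $B>2\alpha+2$, these are absolutely summable: $\sum_j\widetilde\varepsilon_j/(j\log(j/m))\asymp\sum_j\log\log(j/m)/(j(\log(j/m))^2)<\infty$ and $\sum_j\rho_j/j\asymp\sum_j(\log(j/m))^{2\alpha-B}/j<\infty$, while the second-order terms from $\log(1+x)-x$ contribute only $\sum_j O(j^{-2})<\infty$. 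Comparing the two leading sums with integrals (the summands being decreasing in $j$ for $j>m$) gives $\sum_{j=j_0}^{n-1}\tfrac1{2j}=\tfrac12\log(n/j_0)+O(1)$ and $\sum_{j=j_0}^{n-1}\tfrac{\alpha}{j\log(j/m)}=\alpha\log\log(n/m)+O(1)$, hence $\log(z_n/z_{j_0})\ge\tfrac12\log(n/j_0)+\alpha\log\log(n/m)-C_1$ for some $C_1=C_1(\alpha,\beta)$. Exponentiating and using $z_{j_0}\ge c_0\sqrt{j_0/m}$, the powers of $M_0$ cancel and one obtains $z_n\ge c\sqrt{n/m}(\log(n/m))^\alpha$; together with the bounded-ratio case this proves the proposition.

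I expect the crux --- the main obstacle --- to be the control of the perturbations $r_j$: replacing the random $\log\Zz[m,j]$ by its typical value $\tfrac12\log(j/m)$ and discarding the event $B_j^c$ must cost only a \emph{summable} perturbation of the multiplicative increments, since an error of size merely of order $(j\log(j/m))^{-1}$ there would already change the exponent of the logarithm. This is exactly what forces the use of \emph{both} the first and the second moment estimates of \cref{prop:expbounds1} --- the second moment being what makes $\PP^0(B_j^c)$ and $\EE^0[\Zz[m,j]\,\1_{B_j^c}]$ decay like a genuine negative power of $\log(j/m)$ --- and the choice of the truncation exponent $B$ large in terms of $\alpha$; the remaining estimates (for $\widetilde\varepsilon_j$, for the $O(1)$ constants, and for the bounded-ratio range) are routine.
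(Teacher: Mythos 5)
Your argument is correct, and it reaches the conclusion by a genuinely different route from the paper. The paper's proof is much shorter: it invokes the linearisation $\Phi=\sum_{i<x}1/f(i)$ and the fact (\cref{lem:DM09Lemma2.1}) that $\Phi(\Zz[m,n])-\sum_{i=m}^{n-1}i^{-1}$ is a martingale, so that $\EE\Phi(\Zz[m,n])$ is known exactly; Jensen's inequality applied to the concave $\Phi$ then gives $\Phi(\EE\Zz[m,n])\geq f(0)^{-1}+\sum_{i=m}^{n-1}i^{-1}$, and the asymptotics $\Phi(x)\leq 2\log_+x-2\alpha\log\log_+x+C$ from \cref{lem:phibounds} are inverted to read off the lower bound on $\EE\Zz[m,n]$ directly — no truncation and no second moment are needed, because Jensen is applied to $\Phi$ rather than to $\log$. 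You instead run the one-step recursion for $z_j=\EE^0\Zz[m,j]$ head-on, replace the random slope $f(\Zz[m,j])/\Zz[m,j]$ by the deterministic $\lambda_j$ on the truncation event $B_j$, and use Chebyshev plus Cauchy--Schwarz with the second-moment bound of \cref{prop:expbounds1} to show that discarding $B_j^c$ (and the $\widetilde\varepsilon_j$ from the truncation level) costs only a summable perturbation of the multiplicative increments; your diagnosis that this summability is the crux is exactly right, since an error of order $(j\log(j/m))^{-1}$ would corrupt the exponent $\alpha$. Your route is more hands-on and mirrors the paper's proof of the \emph{upper} bound (which also iterates the recursion), at the price of invoking both conclusions of \cref{prop:expbounds1} (legitimately, as that proposition is proved independently beforehand); the paper's route buys brevity and avoids second moments entirely by exploiting the pre-built scaling function $\Phi$ from \cite{dereich_random_2009}. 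All the individual steps you use — the reduction to $k=0$ by stochastic monotonicity, the a priori bound $z_j\geq c_0\sqrt{j/m}$ from the submartingale of \cref{lem:degmart2}, the validity of $f(\Zz)\wedge j=f(\Zz)$ on $B_j$, and the integral comparisons — check out.
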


We note that the two propositions together imply that there are constants $0<c'\leq C'$ depending only on $\alpha$ and~$k$, such that
\begin{equation}\label{psisize}
c' \, \big(1 \vee \log\sfrac{n}m\big)^\alpha \leq \psi^k(m,n) \leq 
C''\, \big(1\vee \log\sfrac{n}m\big)^\alpha.
\end{equation}
To prove ~\cref{prop:expbounds1} and~\cref{prop:expbounds2} we need three auxiliary statements concerning the properties of the attachment rule and the behaviour of the degree evolutions $\mathcal{Z}([m,n])_{n\geq m}$. In particular, in \cite{dereich_random_2009} a scaling function $\Phi$ is introduced to linearise the degree evolutions with respect to logarithmic time. As a byproduct of \cite[Lemma 2.1]{dereich_random_2009}, one obtains useful bounds for the degree evolutions.

\begin{lem}\label[lem]{lem:concave}
Let $f$ be a concave attachment rule and $g$ be given by $$g(x)=\frac{x}{\log({f^{-1}(x))}}, 
\qquad \mbox{ for }x\in \{f(k),k\in \NN\},$$ then there is $K=K(f)\in\NN$, such that $g$ is concave on $\{f(k),k\geq K\}$.

\end{lem}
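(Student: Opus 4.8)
The plan is to reduce the claimed concavity of $g(x)=x/\log(f^{-1}(x))$ on the grid $\{f(k):k\geq K\}$ to a statement about the second-order differences of $g\circ f$ composed with the known behaviour of $f$. Since $g$ is only defined on the discrete set $\{f(k):k\in\NN\}$, the correct notion of concavity here is discrete concavity along this grid, so I would work throughout with the sequence $a_k:=g(f(k))=f(k)/\log k$ (for $k$ large, so $f^{-1}(f(k))=k$), and aim to show that the slopes
\[
s_k:=\frac{a_{k+1}-a_k}{f(k+1)-f(k)}
\]
are eventually non-increasing in $k$; equivalently $a_{k+1}-a_k$ decreases \emph{faster} than the increments $f(k+1)-f(k)$. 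Because $f$ is concave with $f(k)\sim\tfrac12 k$ and $\Delta f(k)=f(k+1)-f(k)\to\tfrac12$, the denominators are bounded between $\tfrac12$ and some constant, so the whole problem is controlled by the numerators $a_{k+1}-a_k$.

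The key computation is an asymptotic expansion. Using $f(k)=\tfrac12 k+\tfrac{\alpha}{2}\,k/\log k+o(k/\log k)$ (or, in the cleaner reference form \eqref{eq:glimitalpha}, exactly $f(k)=\tfrac12 k+\tfrac{\alpha}{2} k/\log(k\vee\e)+\beta$), I would write
\[
a_k=\frac{f(k)}{\log k}=\frac{k}{2\log k}+\frac{\alpha}{2}\frac{k}{(\log k)^2}+o\!\Big(\frac{k}{(\log k)^2}\Big),
\]
and then differentiate the comparison real-variable function $\phi(t)=t/(2\log t)+\tfrac{\alpha}{2}t/(\log t)^2$ (plus lower-order corrections) twice. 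One finds $\phi'(t)=\tfrac{1}{2\log t}+O(1/(\log t)^2)\to 0$ and, crucially, $\phi''(t)=-\tfrac{1}{2t(\log t)^2}+O(1/(t(\log t)^3))<0$ for all large $t$. The discrete second difference $a_{k+2}-2a_{k+1}+a_k$ is, by a two-step Taylor expansion, equal to $\phi''(\theta_k)$ for some $\theta_k\in(k,k+2)$ up to the negligible $o(\cdot)$ error terms carried through from \eqref{eq:fcondition}/\eqref{eq:glimitalpha}; hence it is eventually strictly negative. To pass from "$a_k$ is concave as a sequence" to "$g$ is concave on the grid $\{f(k)\}$" I then combine negativity of $\Delta^2 a_k$ with the monotone behaviour of $\Delta f(k)$ (which, by concavity of $f$, is itself non-increasing), via the identity for the slope differences
\[
s_{k-1}-s_k=\frac{(a_k-a_{k-1})\Delta f(k)-(a_{k+1}-a_k)\Delta f(k-1)}{\Delta f(k-1)\,\Delta f(k)};
\]
writing the numerator as $\Delta f(k)\,(\Delta a_{k-1}-\Delta a_k)+\Delta a_k\,(\Delta f(k)-\Delta f(k-1))$ shows it is a sum of two non-negative terms once $k\geq K$, since $\Delta a_k>0$, $\Delta a_{k-1}-\Delta a_k=-\Delta^2 a_{k-1}>0$, and $\Delta f(k)-\Delta f(k-1)\leq 0$ would go the wrong way — so one instead uses $\Delta a_k>0$ decreasing together with $\Delta f$ decreasing and bounded below to conclude directly that $s_k$ is decreasing. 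Choosing $K=K(f)$ to absorb all the $o(\cdot)$ terms and the range where the expansions are not yet valid completes the argument.

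The main obstacle I anticipate is \emph{bookkeeping of the error terms}: the attachment rule is only specified up to $o(k/\log k)$, and concavity is a statement about the second difference, which is of order $1/(k(\log k)^2)$ — a very small quantity. One must therefore verify that the $o(k/\log k)$ slack in $f$, after two discrete differentiations, produces an error that is genuinely $o(1/(k(\log k)^2))$, i.e.\ dominated by the main term $-\tfrac{1}{2t(\log t)^2}$. This is exactly why the excerpt remarks that "it suffices to study the special case" \eqref{eq:glimitalpha}: for the \emph{explicit} rule $f(k)=\tfrac12 k+\tfrac{\alpha}{2}k/\log(k\vee\e)+\beta$ the expansion of $\phi''$ can be carried out exactly with elementary calculus and no hidden $o(\cdot)$ terms, and the general concave $f$ of the form \eqref{eq:fcondition} is then handled by noting that its degree evolutions and relevant functionals are sandwiched between those of two explicit rules of type \eqref{eq:glimitalpha} (different $\beta$), so $K(f)$ exists although it is not effective. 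A secondary, more conceptual point is to make precise what "concave on $\{f(k),k\geq K\}$" means and check that the slope-monotonicity statement above is the property actually used later (it is: it feeds into the linearisation of the degree evolution via the scaling function $\Phi$ from \cite[Lemma 2.1]{dereich_random_2009}).
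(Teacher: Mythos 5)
Your formalisation of the claim --- discrete concavity via monotone slopes $s_k=\Delta a_k/\Delta f(k)$ --- is the right one, but the argument has gaps I do not see how to close, and it proves less than the lemma asserts. The lemma is stated for an \emph{arbitrary} concave attachment rule, and that generality is exactly what the paper's (quite different) proof delivers: interpolate $f$ to a twice differentiable concave function, set $e=f^{-1}$ (which is convex), compute
\[
g''(x)=\frac{x(e'(x))^2(\log e(x)+2)-e(x)\log e(x)\bigl(xe''(x)+2e'(x)\bigr)}{e(x)^2(\log e(x))^3},
\]
and read off $g''\le 0$ for large $x$ from $e''\ge 0$, the boundedness of $e'$ and the lower bound $e(x)\ge x-1$; no asymptotic expansion of $f$ enters at all. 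Your proposed reduction of the general case to the explicit rule \eqref{eq:glimitalpha} by sandwiching $f$ between two such rules is invalid here: concavity of the function $g$ built from $f$ is not a monotone functional of $f$, so knowing the conclusion for $f_1\le f\le f_2$ says nothing about $f$ itself. (The ``it suffices to study the special case'' remark in the paper, and the $\bar f\ge f$ comparison in \cref{prop:lowerbounds}, concern the degree evolutions and their moments, which genuinely are monotone in $f$.) Moreover, even for $f$ of the form \eqref{eq:fcondition}, the $o(k/\log k)$ slack cannot be differenced twice: a perturbation that is $o(k/\log k)$ pointwise can have second differences of order $1$, which swamps the main term $-\tfrac{1}{2k(\log k)^2}$ you need to dominate. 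You flag this yourself as the main obstacle, but the fix you offer is the sandwiching step that does not apply.

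Separately, the concluding step is a non sequitur: from ``$\Delta a_k>0$ and decreasing'' together with ``$\Delta f(k)$ decreasing and bounded below'' one cannot ``conclude directly'' that $s_k=\Delta a_k/\Delta f(k)$ is decreasing --- the ratio of two positive decreasing sequences increases whenever the denominator decays proportionally faster, and your own decomposition of $s_{k-1}-s_k$ already shows one of the two terms carries the wrong sign. Making the discrete route work would require a quantitative comparison of the decay rates of $\Delta a_k$ and $\Delta f(k)$, which is precisely the bookkeeping the continuous argument avoids. I would drop the expansion entirely and argue as the paper does: everything needed is contained in the qualitative facts that $e=f^{-1}$ is convex, that $e'$ is bounded because $\Delta f$ is bounded away from zero, and that $e$ grows at least linearly.
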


\begin{proof}

By interpolation we can assume that $f$ is twice differentiable on $(0,\infty)$ with existing right derivative in $0$. Let $e$ denote the inverse of $f$, which is a well defined convex function, since $f$ is increasing and concave. The second derivative of $g$ is given by \begin{equation}\label{eq:gdiff}g''(x)=\frac{x(e'(x))^2(\log e(x)+2)-e(x)\log e(x)(xe''(x)+2e'(x))}{e(x)^2(\log e(x))^3},\end{equation} 
for $x\in[0,\infty)$. To see that $g''(x)\leq 0$ for large $x$, we note that $e''(x)\geq 0$ and
$e'(0)\leq e'(x)(\lim_{k\to\infty}\Delta f(k))^{-1}$. As $e(x)$ is bounded below by $x-1$,  the numerator in \eqref{eq:gdiff} is non-positive for sufficiently large $x$.
\end{proof}

\begin{lem}
\label{lem:DM09Lemma2.1}
Let $f$ satisfy condition \eqref{eq:glimitalpha} with $\alpha>0$ and set $$\Phi(x)=\sum_{i=0}^{x-1}\frac{1}{f(i)},\; x\in\N\cup\{0\}.$$ Then, for fixed $m\in\NN$, the process $$\Big(\Phi(\Zz[m,n])-\sum_{i=m}^{n-1}\frac1i\Big)_{n\geq m}$$ is a martingale.
\end{lem}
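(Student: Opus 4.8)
The plan is to verify the martingale property by a one-step computation, exactly as in the proof of \cref{lem:degmart2}. Fix $m\in\NN$ and abbreviate $Z_n=\Zz[m,n]$ and $M_n=\Phi(Z_n)-\sum_{i=m}^{n-1}\frac1i$. The filtration is the natural one generated by the degree evolution $(Z_n)_{n\geq m}$, which is a Markov chain: conditionally on $Z_n$, the increment $\Delta Z_n = Z_{n+1}-Z_n$ is Bernoulli with success probability $f(Z_n)/n$ (for $n$ large enough that $f(Z_n)/n<1$; the general case needs the truncation $\wedge 1$ but this only affects finitely many steps and can be absorbed into the definition, or one restricts to $n\geq f(\infty)$-type thresholds — in any case the statement is about the tail behaviour so this is harmless). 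Integrability of $M_n$ is clear since $Z_n\leq Z_m + (n-m)$ deterministically and $\Phi$ is finite on $\NN_0$.

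The key step is the telescoping identity for $\Phi$: since $\Phi(x+1)-\Phi(x)=\frac{1}{f(x)}$ by definition, we have $\Phi(Z_{n+1})-\Phi(Z_n)=\frac{1}{f(Z_n)}\1\{\Delta Z_n=1\}$ (using that $\Delta Z_n\in\{0,1\}$). Therefore
\begin{equation*}
\EE\big[\Phi(Z_{n+1})-\Phi(Z_n)\,\big|\,Z_n\big]=\frac{1}{f(Z_n)}\cdot\frac{f(Z_n)}{n}=\frac1n.
\end{equation*}
Consequently $\EE[M_{n+1}-M_n\mid Z_n]=\frac1n-\frac1n=0$, which is precisely the martingale property. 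The choice of the compensator $\sum_{i=m}^{n-1}\frac1i$ is engineered to cancel this conditional increment, and the miracle is that the factor $1/f(Z_n)$ in the jump size of $\Phi$ exactly annihilates the factor $f(Z_n)$ in the jump probability, leaving a deterministic increment independent of the current state.

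I do not expect a serious obstacle here; the content is entirely in the clever definition of $\Phi$, which is quoted from \cite{dereich_random_2009}. The only point requiring a word of care is the truncation $f(\Zz[m,n])/n\wedge 1$ in the dynamics versus the unbounded $f$ appearing in the computation above: one should note that under \eqref{eq:glimitalpha} we have $f(k)=O(k)$, and since $Z_n\leq Z_m+n-m$, the truncation is inactive once $n$ exceeds a fixed constant (depending on $m$ and $k$), so the martingale identity holds for all $n$ once one checks the first few steps separately or, more cleanly, observes that the statement only needs to be understood in the regime where the ratio is genuinely a probability. This mirrors exactly the structure of the computation \eqref{eq:derivemart} in \cref{lem:degmart2}, so no new difficulty arises.
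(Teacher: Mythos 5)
Your proof is correct. The paper itself does not carry out this computation: it simply cites \cite[Lemma 2.1]{dereich_random_2009} and translates notation ($t-s=\sum_{i=m}^{n-1}\tfrac1i$ and $Z[s,t]=\Phi(\Zz[m,n])$). Your direct verification is exactly the argument behind that cited lemma: the telescoping identity $\Phi(x+1)-\Phi(x)=1/f(x)$ combined with the Bernoulli$\big(f(\Zz[m,n])/n\big)$ increment gives conditional drift $1/n$, which the compensator cancels. Your side remark on the truncation $f(\Zz[m,n])/n\wedge 1$ is also apt: since $\Zz[m,n]\leq \Zz[m,m]+n-m$ and $f(k)=\tfrac{k}{2}+o(k)$, the truncation is inactive beyond a constant threshold, and under the normalisation of \cite{dereich_random_2009} it never binds, so the identity holds for all $n\geq m$. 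Nothing is missing.
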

\begin{proof}
This is the first statement of \cite[Lemma 2.1]{dereich_random_2009}. Note that in their notation $$t-s=\sum_{i=m}^{n-1}\frac1i,\, \textrm{ and } Z[s,t]=\Phi(\Zz[m,n]).$$
\end{proof}

\begin{lem}\label[lem]{lem:phibounds}
Let $f$ and $\Phi$ be as in \cref{lem:DM09Lemma2.1}. Then,
\begin{enumerate}[(i)]
\item the linear interpolation \smash{$\Phi^{-1}\colon[f(0)^{-1},\infty)\longrightarrow [0,\infty)$} of the inverse of $\Phi$ exists and is strictly monotone, in particular, for $x\geq 1/{f(0)}$ and $k\in\NN,$ $$\Phi^{-1}(x)\geq k,\ \ \textrm{ if } x\geq\Phi(k);$$\label{list_1}
\item there are constants $c,C>0$, only depending on $f$, such that, for all $x\in\NN,$ $$\frac{1}{f(0)}\vee\big(2\log_+ x -2\alpha\log\log_+ x-c\big)\leq\Phi(x)\leq 2\log_+ x -2\alpha\log\log_+x +C,$$ where $\log_+y=\log (y\vee 1)$ and $\log\log_+y=\log\log (y\vee \e), y\in \R.$\label{list_2}
\end{enumerate}
\end{lem}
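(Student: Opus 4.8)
The plan is to establish \cref{lem:phibounds} by combining the explicit definition $\Phi(x)=\sum_{i=0}^{x-1} 1/f(i)$ with the asymptotics of $f$ from \eqref{eq:glimitalpha}. Part \eqref{list_1} is essentially bookkeeping: since $f>0$ everywhere, each summand $1/f(i)$ is strictly positive, so $\Phi$ is strictly increasing on $\NN\cup\{0\}$ with $\Phi(0)=0$ and $\Phi(1)=1/f(0)$; hence its linear interpolation on $[1/f(0),\infty)$ (taking values in $[0,\infty)$, noting $\Phi(0)=0<1/f(0)=\Phi(1)$ so the interpolation is pinned at the left endpoint $\Phi^{-1}(1/f(0))$ — I would state the domain carefully here, treating the stretch between $\Phi(0)$ and $\Phi(1)$) is a well-defined strictly monotone bijection onto its range. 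The monotonicity statement $\Phi^{-1}(x)\ge k$ whenever $x\ge\Phi(k)$ is then immediate from strict monotonicity of $\Phi^{-1}$ and the fact that $\Phi(k)$ is in the domain for $k\ge 1$.

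\textbf{For part \eqref{list_2}}, the real content, I would first write, for $x\ge 2$,
\[
\Phi(x)=\frac{1}{f(0)}+\frac{1}{f(1)}+\sum_{i=2}^{x-1}\frac{1}{f(i)},
\]
and plug in $f(i)=\tfrac{i}{2}+\tfrac{\alpha}{2}\tfrac{i}{\log i}+\beta$, so that
\[
\frac{1}{f(i)}=\frac{2}{i}\cdot\frac{1}{1+\tfrac{\alpha}{\log i}+\tfrac{2\beta}{i}}
=\frac{2}{i}-\frac{2\alpha}{i\log i}+r_i,
\]
where, by a Taylor expansion of $(1+u)^{-1}$, the remainder satisfies $|r_i|\le C_0\, i^{-1}(\log i)^{-2}$ for $i$ large (and is trivially bounded for small $i$). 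Summing, I would use the standard estimates $\sum_{i=2}^{x-1} 1/i = \log x + O(1)$ and $\sum_{i=2}^{x-1} 1/(i\log i) = \log\log x + O(1)$ (both via comparison with $\int 1/t\,dt$ and $\int 1/(t\log t)\,dt$), together with the fact that $\sum_i r_i$ converges absolutely because $\sum i^{-1}(\log i)^{-2}<\infty$. This yields
\[
\Phi(x)=2\log x-2\alpha\log\log x+O(1),
\]
which, after replacing $\log x$ by $\log_+x$ and $\log\log x$ by $\log\log_+ x$ to cover small $x$, and after absorbing constants into $c$ and $C$ and taking the maximum with $1/f(0)$ on the lower side (legitimate since $\Phi(x)\ge\Phi(1)=1/f(0)$ for $x\ge 1$ and $\Phi(x)\ge\Phi(0)=0$ trivially, but actually $\Phi$ is increasing so $\Phi(x)\ge 1/f(0)$ for all $x\ge1$), gives both inequalities.

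\textbf{The main obstacle} is not conceptual but is getting the remainder-term bookkeeping uniform: I must check that the error $r_i$ is genuinely $O(i^{-1}(\log i)^{-2})$ including the cross-term between the $\alpha/\log i$ perturbation and the $\beta/i$ term, and that the finitely many small-$i$ terms (where $\log i$ is small or $f^{-1}$ is awkward) are harmlessly absorbed into the additive constants. I would handle the small-$i$ range by noting there are only finitely many such terms and each contributes $O(1)$. A secondary subtlety is matching the two different conventions $\log_+$ versus $\log$ and $\log\log_+$ versus $\log\log$ near $x=1$ and $x=\e$: for $x$ in a bounded range the claimed bounds reduce to comparing $\Phi(x)$ — a fixed finite quantity — against fixed finite quantities, so enlarging $c$ and $C$ suffices. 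Once these routine checks are in place the proposition follows directly.
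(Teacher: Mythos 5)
Your proposal is correct and follows essentially the same route as the paper: part (i) from strict positivity and monotonicity of $1/f$, and part (ii) from the expansion $1/f(i)=2/i-2\alpha/(i\log i)+O\big(i^{-1}(\log i)^{-2}\big)$ followed by summation, with the summable remainder and the finitely many small-$i$ terms absorbed into the constants. The paper's own proof is just a terser version of exactly this argument.
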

\begin{proof}
For \textit{(i)} note, that the attachment rule $f$ is positive and strictly increasing, which implies that $\DD \Phi={1}/{f}>0$ is strictly decreasing. Thus $\Phi$ is concave and strictly increasing, hence its inverse is well defined, convex, strictly increasing and $\Phi^{-1}(y)=x$, if $y=\sum_{i=0}^{x-1}{1}/{f(i)}.$ The claimed monotonicity is inherited by the linear interpolation.\linebreak
%
To show \textit{(ii)}, we note that $\Phi(x)\geq 1/{f(0)}$ is true for any $x\in\NN$ and that
$$\frac{1}{f(i)}= \frac{2}{i}-\frac{2\alpha}{i\log i}+{O}\Big(\frac{1}{i(\log i)^2}\Big),$$
from which the statement follows by summation.
\end{proof}

\begin{proof}[Proof of Proposition~\ref{prop:expbounds1}]
We begin with the first moment and note that, for $n\geq n$, $$f(\Zz[m,n+1])=f(\Zz[m,n])+\1 \{n+1\rightarrow m\}\Delta f(\Zz[m,n]))$$ and conditioning on $\Zz[m,n]$ yields $$\EE^k[ f(\Zz[m,n+1])| \Zz[m,n]]=f(\Zz[m,n])+\frac{ f(\Zz[m,n])\Delta f(\Zz[m,n]))}{n}.$$
 Taking expectations we obtain the recursion
\begin{equation}\label{eq:recursion1}
 \EE^k f(\Zz[m,n+1])=\EE^k f(\Zz[m,n])+\EE^k\frac{f(\Zz[m,n])\DD f(\Zz[m,n])}{n}.
\end{equation}
Note that for sufficiently large $i$, $\log i\geq \log f(i)$ and hence \begin{equation}
\label{eq:idep1}
\Delta f(i)=\frac12 +\frac\alpha2\Big(\frac{1}{\log (i+1)} -\frac{1}{(\log i)(\log i +1)}\Big)\leq \frac12 +\frac\alpha2\frac{1}{\log f(i)}.\end{equation}  We may thus fix $i_0$ such that $f(i)>\e^2$ and \eqref{eq:idep1} hold for all $i\geq i_0.$ For $k\geq i_0,$ it follows that
$$\EE^k f(\Zz[m,n])\DD f(\Zz[m,n])\leq \frac{1}{2}\EE^kf(\Zz[m,n])+\frac{\alpha}{2}\EE^k\frac{f(\Zz[m,n])}{\log f(\Zz[m,n])}.$$ The function \smash{$x\mapsto x/{(\log x)}$} is concave on $(\e^2,\infty)$, and we  apply Jensen's inequality to the second term in this sum and obtain $$\EE^kf(\Zz[m,n])\DD f(\Zz[m,n])\leq \frac{1}{2}\EE^kf(\Zz[m,n])+\frac{\alpha}{2}\frac{\EE^kf(\Zz[m,n])}{\log \EE^k f(\Zz[m,n])}.$$ Applying this bound to the right hand side of \eqref{eq:recursion1} yields, after division by $\EE^kf(\Zz[m,n]),$
\begin{equation}
\frac{\EE^kf(\Zz[m,n+1])}{\EE^kf(\Zz[m,n])}\leq 1+ \frac{1}{2n}+\frac{\alpha}{2n\log \EE^kf(\Zz[m,n])}.
\end{equation}
We can apply the lower bound in~\eqref{eq:xitoroot} to bound the denominator of the last term from below by $n\big(1\vee \log(n/m)\big)$ to get
\begin{equation}\label{eq:rec2}
\frac{\EE^kf(\Zz[m,n+1])}{\EE^kf(\Zz[m,n])}\leq 1+ \frac{1}{2n}+\frac{\alpha}{n(1\vee\log\frac{n}{m})}.
\end{equation}
Iterating both sides of \eqref{eq:rec2} in $n$ then yields
$$\EE^k f(\Zz[m,n])\leq f(k)\prod_{i=m}^{n-1}\Big(1+ \frac{1}{2i}+\frac{\alpha}{i(1\vee\log\frac{i}{m})}\Big),$$
and using the inequality $1+x\leq \e^x$ we get $$\EE^k f(\Zz[m,n])\leq f(k)\exp\Big(\sum_{i=m}^{n-1}\frac{1}{2i}+\sum_{i=m}^{n-1}\frac{\alpha}{i(1\vee\log\frac{i}{m})}\Big),$$ which implies
\begin{equation}\label{eq:almbound}
\begin{aligned}
\EE^kf(\Zz[m,n])\leq&\, f(k)\exp\Big[\frac{1}{2}\sum_{i=m}^{n-1}\frac{1}{i}+\alpha\Big(\sum_{i=m}^{\lceil\e m\rceil-1}\frac{1}{i}+\sum_{i=\lceil\e m\rceil}^{n-1}\frac{1}{i\log\sfrac{i}{m}}\Big)\Big].
\end{aligned}
\end{equation}
We have $$\e^{\frac{1}{2}\sum_{i=m}^{n-1}\frac{1}{i}}\leq D\sqrt{\frac{n}{m}},$$ for some constant $D.$ To handle the second expression in the exponent we observe that 
\smash{$\sum_{i=m}^{\lceil\e m\rceil-1}i^{-1}\leq {11}/{6}$} and  
$$\sum_{i=\lceil\e m\rceil}^{n-1}\frac{1}{i\log\sfrac{i}{m}}\leq \int_{\e m}^{n}\frac{\frac{1}{m}}{\frac{s}{m}\log\frac{s}{m}}\di s+C''=\int_{\e}^{\frac{n}{m}}\frac{1}{x\log x}\di x+C''=\log\log \sfrac{n}{m}+C'',$$  
for some absolute constant $C''$. Applying these estimates to \eqref{eq:almbound} we arrive at $$\EE^k\Zz[m,n]\leq f(k)\e^{\frac{11\alpha}{6}+C''}D\sqrt{\frac{n}{m}}\big(1\vee\log\sfrac{n}{m}\big)^{\alpha},$$ proving the desired bound for $C(k)=f(k)\e^{C''+{11\alpha}/{6}}D.$
\medskip

It remains to deduce the bound for the second moment. We argue as for the first moment, conditioning as in the derivation of \eqref{eq:recursion1} yields a similar recursion for the function $f(\cdot)^2$ in terms of $f(\cdot)^2$ itself and the differences $\Delta f(\cdot)^2:=\Delta(f(\cdot)^2)$. In fact we obtain
$$\EE^k f(\Zz[m,n])^2= f(k)^2+\sum_{s=m}^{n-1}\EE^k\frac{f(\Zz[m,s])\Delta f(\Zz[m,s])^2}{s}.$$ Since $f$ is nondecreasing, we find that $\Delta f(k)^2\leq f(k+1)2\Delta f(k)$ and thus $$\EE^k f(\Zz[m,n])^2\leq f(k)^2+\sum_{s=m}^{n-1}\EE^k\frac{2f(\Zz[m,s]+1)^2\Delta f(\Zz[m,s])}{s}=:E(m,n).$$ The function $E(m,n)$ can be bounded in the same fashion as the first moment, we obtain, for $n\geq m$, $$\frac{E(m,n+1)}{E(m,n)}\leq 1+\frac{1}{n}+\frac{2\alpha}{n(1\vee\log\frac{n}{m})},$$ which implies $E(m,n)\leq C'(k)\big(\log(n/m)\big)^{2\alpha} n/{m},$ and the second moment bound follows.\end{proof}

\begin{proof}[Proof of Proposition~\ref{prop:expbounds2}]
By monotonicity, we only need to focus on the lower bound for $k=0$ and begin with the observation that the concavity condition on $f$ implies that
\begin{equation}\label{eq:explb1}
\EE f(\Zz[m,n])\geq f(0)+\sfrac{1}{2}\EE \Zz[m,n]. 
\end{equation}
To obtain a lower bound on $\Zz[m,n]$, we begin by representing $\Phi(\Zz[m,n])=\sum_{i=m}^{n-1}{i^{-1}}+M_n,$ where $(M_n)_{n\geq m}$ is a martingale, using \cref{lem:DM09Lemma2.1}. Clearly, $$\EE \Phi(\Zz[m,n])=\frac{1}{f(0)}+\sum_{i=m}^{n-1}\frac{1}{i},$$ and using concavity of $\Phi,$ Jensen's inequality implies that $$\Phi(\EE\Zz[m,n])\geq\frac{1}{f(0)}+\sum_{i=m}^{n-1}\frac{1}{i},$$ which yields, together with the upper bound on $\Phi$ from \cref{lem:phibounds},
\begin{equation*}
 (C +2\log \EE\Zz[m,n]-2\alpha\log\log \EE\Zz[m,n]) \vee 0 \geq \log\sfrac{n}{m}
\end{equation*}
for some suitably chosen constant $C>0.$ This yields 
\smash{$\EE\Zz[m,n]\geq d\sqrt{{n}/{m}}(\log\EE\Zz[m,n] \vee 0)^\alpha$} for some small constant $d>0$ and combining the last inequality with \eqref{eq:explb1} we obtain $$\EE f(\Zz[m,n])\geq f(0)+\frac{d}{2} \sqrt{\frac{n}{m}}(\log\EE\Zz[m,n] \vee 0)^\alpha. $$ The expectation on the right can be bounded below by the expectation in the affine case, for which a lower bound is implicit in \eqref{eq:xitoroot}. For all sufficiently large $n>m$ we get $$\EE f(\Zz[m,n])\geq f(0)+c'\sqrt{\frac{n}{m}}(1 \vee \log\sfrac{n}{m})^\alpha$$ for some $c'>0$ and a further adjustment of the constant, which only depends on the value $f(0)$, yields the statement of the proposition.\end{proof}

We close this section with two very intuitive stochastic domination results from \cite{dereich_random_2013} which are instrumental in the proof of \cref{thm:PA}.
\begin{lem}[Stochastic domination I, {\cite[Lemma~2.9]{dereich_random_2013}}]\label{lem:DM11Lemma2.9}
Let $f$ be concave and fix integers $m<n_1<\dots<n_i.$ The process $(\Zz[m,n])_{n\geq m}$ conditioned on the event $\{\Delta\Zz[m,n_j]=0, j=1,\dots,i\}$ is stochastically dominated by the unconditioned process.
\end{lem}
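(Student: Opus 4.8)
\medskip

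\noindent\textbf{Proof strategy.} The plan is to use the fact that, because $f$ is concave and takes values in $(0,\infty)$ on the \emph{unbounded} domain $\NN_0$, it is automatically nondecreasing: if $\Delta f(k)<0$ for some $k$, concavity would force $\Delta f(j)\le\Delta f(k)<0$ for all $j\ge k$ and hence $f(n)\to-\infty$, a contradiction. Consequently the degree evolution $Z(\cdot):=(\Zz[m,n])_{n\ge m}$ (started at any $\Zz[m,m]=k\ge0$) is a monotone, ``attractive'' time-inhomogeneous Markov chain: writing $p_n(z)=\frac{f(z)}n\wedge1$ for its one-step up-probability from state $z$ at time $n$, the map $z\mapsto p_n(z)$ is nondecreasing. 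I would realise $Z$ through i.i.d.\ uniforms $(U_n)_{n\ge m}$ on $[0,1]$ via $\Delta\Zz[m,n]=\1\{U_n\le p_n(\Zz[m,n])\}$; the usual coupling then shows that two copies started at heights $w\le w'$ and driven by the \emph{same} uniforms satisfy $Z^{(w)}(n)\le Z^{(w')}(n)$ for all $n$.

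Write $A=\{\Delta\Zz[m,n_j]=0,\ j=1,\dots,i\}$. One should first record that $\PP(A)>0$ (for the relevant trajectories $p_n<1$ eventually) so that conditioning is meaningful, and that the conditioned process $\widetilde Z$ is again a time-inhomogeneous Markov chain: conditioning on $A$ only involves increments at the fixed times $n_j$, and the ``past part'' $\{\Delta Z(n_j)=0:n_j<n\}$ of $A$ is $\sigma(Z(m),\dots,Z(n))$-measurable. The key computation is then the conditioned transition probability. For $n$ different from every $n_j$, let $g_k(w)=\PP\big(\Delta Z(n_l)=0\text{ for all }n_l\ge n\mid Z(k)=w\big)$ (with $g\equiv1$ if no constraint times remain); Bayes' rule and the Markov property give
\begin{equation*}
\widetilde p_n(z)=\frac{p_n(z)\,g_{n+1}(z+1)}{p_n(z)\,g_{n+1}(z+1)+\big(1-p_n(z)\big)\,g_{n+1}(z)},
\end{equation*}
while at a constraint time $\widetilde p_{n_j}(z)=0$ trivially. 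A one-line manipulation shows $\widetilde p_n(z)\le p_n(z)$ reduces to $g_{n+1}(z+1)\le g_{n+1}(z)$.

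So everything hinges on showing that $w\mapsto g_k(w)$ is nonincreasing, and this is where the monotone coupling does the work: running two copies from $w\le w'$ with common uniforms, $Z^{(w)}(n_l)\le Z^{(w')}(n_l)$ at every constraint time, hence $p_{n_l}(Z^{(w)}(n_l))\le p_{n_l}(Z^{(w')}(n_l))$ since $f$ is nondecreasing, hence $\{U_{n_l}>p_{n_l}(Z^{(w')}(n_l))\}\subseteq\{U_{n_l}>p_{n_l}(Z^{(w)}(n_l))\}$ for each $l$; intersecting over $l$ shows the event ``all forbidden increments vanish'' is less likely for the higher copy, i.e.\ $g_k(w')\le g_k(w)$. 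Combining, $\widetilde p_n(z)\le p_n(z)$ for all $n,z$, and a final monotone coupling of $\widetilde Z$ against $Z$ with common uniforms (using that whenever $\widetilde Z(n)<Z(n)$ there is a one-unit buffer, and whenever $\widetilde Z(n)=Z(n)$ the ordering of the up-probabilities is in our favour) yields $\widetilde Z(n)\le Z(n)$ almost surely for every $n$, which is the asserted stochastic domination.

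I expect the substantive point to be the monotonicity of $g_k$ — equivalently, the attractiveness of the chain — which is available cleanly only because positivity forces $f$ nondecreasing; the rest (checking $\PP(A)>0$, the Markov property of $\widetilde Z$, and the bookkeeping at the constraint times $n_j$ themselves) is routine but should be written carefully. An essentially equivalent route avoids the Markov-chain discussion: express $Z(n)$ as a coordinatewise nonincreasing function of $(U_j)_j$ and $\1_A$ as a coordinatewise nondecreasing function of $(U_j)_j$ (both via the same monotone coupling), and apply the Harris/FKG inequality for the product measure to obtain $\EE[h(Z)\mid A]\le\EE[h(Z)]$ for every nondecreasing $h$; there the only delicate point is the finite-dimensional approximation needed to invoke FKG on the infinite product.
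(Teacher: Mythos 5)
The paper does not actually prove this lemma — its ``proof'' is the single line ``See [DM13, p.~18]'' — so there is no in-paper argument to compare against; I can only assess your proof on its own merits and against the cited reference. Your argument is correct and essentially the standard one: positivity plus concavity forces $f$, hence $z\mapsto f(z)/n\wedge 1$, to be nondecreasing, which makes the chain attractive; Bayes' rule identifies the conditioned transition kernel; the inequality $\widetilde p_n(z)\le p_n(z)$ reduces to the monotonicity of the survival function $g_{n+1}$, which the monotone coupling delivers because the higher copy sits at higher states at every constraint time; and a final coupling (using that increments are $0$ or $1$, so strict inequality is never destroyed) converts the pointwise kernel inequality into pathwise domination. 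The points you flag as needing care are exactly the right ones: the Markov property of the conditioned chain (which holds because the ``past'' part of $A$ is $\sigma(Z(m),\dots,Z(n))$-measurable) and the non-degeneracy $\PP(A)>0$, which can fail for the general statement if $f(k)\ge n_1$ forces a sure jump, but never occurs in the paper's applications where $m$ is large. Your closing FKG alternative is also valid, and in fact needs no infinite-dimensional approximation, since for fixed $n$ both $Z(n)$ and $\1_A$ depend on only finitely many of the uniforms.
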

\begin{proof} See {\cite[p. 18]{dereich_random_2013}}.
\end{proof}
\begin{lem}[Stochastic domination II, {\cite[Lemma~2.10]{dereich_random_2013}}]\label{lem:DM11Lemma2.10}
Let $f$ be concave and fix $i,k\in\NN$. For integers $n_i>\dots >n_1>m>k+i$ there is a coupling of the process $(\Zz[m,l])_{l\geq m}$ started in $\Zz[m,m]=k$ and conditioned on $\{\DD\Zz[m,n_j]=1 \;\forall j\in\{1,\dots,i\}\}$ and the unconditional process $(\Zz[m,l])_{l\geq m}$ started in $\Zz[m,m]=k+i$ such that for the coupled versions $(\bar{\Zz}^\ssup{c}[m,l],\bar{\Zz}^\ssup{u}[m,l])_{l\geq m}$ one has $$\DD\bar{\Zz}^\ssup{c}[m,l]\leq\DD\bar{\Zz}^\ssup{u}[m,l]+\sum_{j=1}^i\1{\{l=n_j\}}, \textrm{ for all }l\geq m,$$ and consequently $$\bar{\Zz}^\ssup{c}[m,l]\leq \bar{\Zz}^\ssup{u}[m,l], \textrm{ for all }l\geq m.$$ I.e. the unconditioned process initiated in $k+i$ dominates the process initiated at $k$ and conditioned to have jumps at times $n_1,\dots,n_i.$
\end{lem}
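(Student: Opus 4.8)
The plan is to realise the conditioned degree evolution as a Doob $h$-transform of the Markov chain $(\Zz[m,l])_{l\ge m}$ and to couple it, step by step, with the unconditioned process started $i$ units higher. Throughout write $p_l(z)=f(z)/l$ for the increment probability at height $z$ and time $l$ (the truncation by $1$ being irrelevant here, since $m>k+i$ forces $\Zz[m,l]<l$, and the estimate below is trivial whenever the truncation is active); note that $p_l$ is non-decreasing in $z$. For $l\le n_i$ set
$$h_l(z):=\PP^{z}_l\big(\DD\Zz[m,n_j]=1\text{ for every }j\text{ with }n_j\ge l\big),$$
the probability that an evolution started at height $z$ at time $l$ jumps at all the not-yet-passed prescribed times, and let $h_l\equiv 1$ for $l>n_i$. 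Then $h$ obeys the backward recursion $h_l(z)=(1-p_l(z))h_{l+1}(z)+p_l(z)h_{l+1}(z+1)$ at non-prescribed times and $h_{n_j}(z)=p_{n_j}(z)h_{n_j+1}(z+1)$ at the prescribed times, and Bayes' rule identifies the conditioned process as a time-inhomogeneous Markov chain which at a prescribed time increases deterministically by one and at every other step increases with probability $\tilde p_l(z)=p_l(z)\,h_{l+1}(z+1)/h_l(z)$.

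Granting for the moment the pointwise estimate
\begin{equation}\label{eq:keySD}
\tilde p_l(z)\le p_l(z+1)\qquad\text{for all }z\ge 0\text{ and all non-prescribed }l\le n_i-1,
\end{equation}
I build the coupling by induction on $l\ge m$, keeping the invariant that the gap $G_l:=\bar{\Zz}^\ssup{u}[m,l]-\bar{\Zz}^\ssup{c}[m,l]$ is at least the number $r_l:=\#\{j:n_j\ge l\}$ of prescribed times still to come; at the start $G_m=i=r_m$. At a prescribed time $l=n_j$ I let $\bar{\Zz}^\ssup{c}$ increase by one and let $\bar{\Zz}^\ssup{u}$ evolve according to its own transition, so that $\DD\bar{\Zz}^\ssup{c}[m,l]=1\le\DD\bar{\Zz}^\ssup{u}[m,l]+1$ and, since $G_l\ge r_l\ge 1$, also $G_{l+1}\ge G_l-1\ge r_l-1=r_{l+1}$. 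At a non-prescribed time $l$ with $r_l\ge 1$ I couple the two Bernoulli increments monotonically; this is admissible because $p_l$ is increasing and $G_l\ge r_l\ge 1$, so that by \eqref{eq:keySD}
$$\tilde p_l\big(\bar{\Zz}^\ssup{c}[m,l]\big)\le p_l\big(\bar{\Zz}^\ssup{c}[m,l]+1\big)\le p_l\big(\bar{\Zz}^\ssup{c}[m,l]+G_l\big)=p_l\big(\bar{\Zz}^\ssup{u}[m,l]\big),$$
and it yields $\DD\bar{\Zz}^\ssup{c}[m,l]\le\DD\bar{\Zz}^\ssup{u}[m,l]$ together with $G_{l+1}\ge G_l$. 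Once $r_l=0$ the conditioning is exhausted, $\tilde p_l=p_l$, and $G_l\ge 0$ already makes the monotone coupling admissible. In all cases $\DD\bar{\Zz}^\ssup{c}[m,l]\le\DD\bar{\Zz}^\ssup{u}[m,l]+\sum_{j=1}^i\1\{l=n_j\}$; summing this over $l$ and using $\#\{j:n_j<l\}\le i$ gives $\bar{\Zz}^\ssup{c}[m,l]\le\bar{\Zz}^\ssup{u}[m,l]$ for all $l\ge m$, which is the assertion.

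It remains to prove \eqref{eq:keySD}. Because $\tilde p_l(z)=f(z)h_{l+1}(z+1)/(l\,h_l(z))$ and $p_l(z+1)=f(z+1)/l$, the estimate \eqref{eq:keySD} is equivalent to the harmonic-function inequality $f(z)\,h_{l+1}(z+1)\le f(z+1)\,h_l(z)$, i.e.\ to the statement that $z\mapsto h_l(z)/f(z)$ does not increase along the passage from $(l,z)$ to $(l+1,z+1)$. I would establish this by backward induction on $l$, the base case following from $h_{n_i+1}\equiv 1$ and the monotonicity of $f$. Substituting the backward recursion for $h_l$ rewrites the target inequality as $f(z)(l-f(z+1))\,h_{l+1}(z+1)\le f(z+1)(l-f(z))\,h_{l+1}(z)$, so one needs a sufficiently sharp upper bound on the ratio $h_{l+1}(z+1)/h_{l+1}(z)$; the bound obtained from the monotonicity of $h_{l+1}/f$ alone is not quite enough, and one is led to propagate backward, alongside that monotonicity, an auxiliary structural property of $z\mapsto h_l(z)$ — concavity, or an $f$-weighted refinement of it — and to check that both properties survive the prescribed steps $l=n_j$, at which $h_{n_j}(z)=p_{n_j}(z)h_{n_j+1}(z+1)$ reintroduces the factor $f(z)$ explicitly. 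This inductive control of the family $(h_l)$, rather than the coupling bookkeeping, which is routine, is the main obstacle; in calibrating the strengthened induction hypothesis it is useful to examine the affine attachment rule, for which \cref{lem:degmart2} makes $h_l$ explicit and all the inequalities become transparent.
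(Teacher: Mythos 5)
Your overall architecture --- realising the conditioned chain as a Doob $h$-transform and running a monotone step-by-step coupling with the invariant $G_l\ge r_l$ --- is sound, and the bookkeeping part is indeed routine. But the proposal does not merely leave a step unfinished: the key inequality $\tilde p_l(z)\le p_l(z+1)$ on which the whole coupling rests is \emph{false} whenever more than one prescribed jump remains. Take the affine rule $f(z)=z+1$ and two consecutive prescribed times $n_1=l+1$, $n_2=l+2$. Then $h_{l+1}(z)=\frac{(z+1)(z+2)}{(l+1)(l+2)}$, a one-line computation gives $h_l(z)=\frac{(z+1)(z+2)}{l(l+1)}$, and hence
$$\tilde p_l(z)=\frac{f(z)}{l}\,\frac{h_{l+1}(z+1)}{h_l(z)}=\frac{z+3}{l+2},\qquad p_l(z+1)=\frac{z+2}{l},$$
so that $\tilde p_l(z)>p_l(z+1)$ as soon as $l>2z+4$, i.e.\ in exactly the regime relevant to the lemma. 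This is not an artefact of your bound being unproven: conditioning on $r$ future jumps genuinely pushes the current jump probability above that of the unconditioned process sitting only one level higher. What is true, and what your invariant $G_l\ge r_l$ actually calls for, is the weaker estimate $\tilde p_l(z)\le p_l(z+r_l)$ (in the example $p_l(z+2)=\frac{z+3}{l}>\frac{z+3}{l+2}$). If you want to complete the proof along your lines you must aim for that inequality instead, and the backward induction for the family $(h_l)$ has to be recalibrated accordingly; the difficulty you correctly flagged at the prescribed times $l=n_j$, where $h_{n_j}(z)=p_{n_j}(z)h_{n_j+1}(z+1)$ destroys the monotonicity of $h_l/f$ in $z$, is precisely the point at which the ``$+1$'' version breaks down and the ``$+r_l$'' version becomes necessary.

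For comparison, the paper does not prove the statement from scratch at all: it quotes the case $i=1$ verbatim from \cite[Lemma~2.10]{dereich_random_2013} --- for a single remaining prescribed jump your inequality coincides with the correct one and does hold --- and obtains general $i$ by a ``straightforward induction'', i.e.\ by peeling off one conditioned jump at a time and applying the one-jump domination repeatedly. That route sidesteps the multi-jump $h$-transform analysis entirely, which is exactly where your argument fails; if you want a self-contained proof, performing the induction on $i$ and reserving your backward-induction analysis for the single-jump case is also the cleanest repair.
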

\begin{proof}
The case $i=1$ is the original statement {\cite[Lemma~2.10]{dereich_random_2013}} and proven there. The generalisation to $i=2,3,\dots$ is obtained by a straightforward induction argument.
\end{proof}
\subsection{Lower bounds for distances}
The first moment estimates of the previous section now yield lower bounds on the typical distances in a straightforward manner under the assumption of bounded correlation for edges along any self-avoiding path.
\begin{lem}[First order lower bound on distances]\label[lem]{lem:1stMomentBound}
Let $\Gg_N$ be a random graph with vertex set $[N]$ and assume that there are $\kappa_N\geq0$ and $\Psi_N\geq 0$, such that, for any self-avoiding path $P=(v_0,\dots,v_l)$, we have
\begin{equation}
\label{eq:cor}
\PP(P\subset \Gg_N)\leq \kappa_N^l\prod_{j=0}^{l-1}\PP(v_j\leftrightarrow v_{j+1})
\end{equation}
and
\begin{equation}
\label{eq:conpb}\PP(v\leftrightarrow w)\leq\frac{\Psi_N}{\sqrt{vw}},\ \ \text{for all }v,w\in[N],
\end{equation}
where
\begin{equation}\label{eq:Psiub}
\liminf_{N\to\infty}\kappa_N \Psi_N\log N>1.
\end{equation}
Then, for uniformly chosen vertices $U,V\in\Gg_N$, $$\lim_{N\to\infty}\PP\Big(d_N(U,V)\geq\Big\lceil\frac{\log N}{\log\log N+\log \Psi_N+\log\kappa_N}\Big\rceil\Big)=1.$$
\end{lem}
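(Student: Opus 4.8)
The plan is to prove the statement by a first moment (union bound) argument over self-avoiding paths connecting $U$ to $V$. Write $\ell_N:=\lceil \log N/(\log\log N+\log\Psi_N+\log\kappa_N)\rceil$ for the target length. It suffices to show that the expected number of self-avoiding paths of length $<\ell_N$ between two uniformly chosen vertices tends to $0$; this bounds $\PP(d_N(U,V)<\ell_N)$ from above, since the existence of such a short path is precisely the event that some self-avoiding path of length at most $\ell_N-1$ joins $U$ and $V$. (We may ignore the possibility that $U$ and $V$ lie in different components, as that only helps.)

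First I would set up the counting. For a fixed length $l\le \ell_N-1$, summing over ordered tuples of distinct vertices $(v_0,\dots,v_l)$ with $v_0=U$, $v_l=V$, and using \eqref{eq:cor} followed by \eqref{eq:conpb},
\begin{equation*}
\PP\big(d_N(U,V)=l\big)\le \frac{1}{N^2}\sum_{v_0,\dots,v_l}\kappa_N^l\prod_{j=0}^{l-1}\PP(v_j\leftrightarrow v_{j+1})\le \frac{1}{N^2}\sum_{v_0,\dots,v_l}\kappa_N^l\,\Psi_N^l\prod_{j=0}^{l-1}\frac{1}{\sqrt{v_jv_{j+1}}}.
\end{equation*}
The product telescopes in the sense that each internal vertex $v_j$ ($1\le j\le l-1$) contributes a factor $1/v_j$, while the endpoints contribute $1/\sqrt{v_0}$ and $1/\sqrt{v_l}$. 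Dropping the distinctness constraint (an upper bound) and factorising the sum over $[N]$, the internal sums each give $\sum_{v=1}^N 1/v\le 1+\log N$, and the two endpoint sums give $\sum_{v=1}^N 1/\sqrt v\le 2\sqrt N$ each. Hence
\begin{equation*}
\PP\big(d_N(U,V)=l\big)\le \frac{1}{N^2}\,(\kappa_N\Psi_N)^l\,(1+\log N)^{l-1}\,(2\sqrt N)^2=\frac{4}{1+\log N}\,\big(\kappa_N\Psi_N(1+\log N)\big)^{l}.
\end{equation*}

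Next I would sum over $l$ from $0$ (or $1$) to $\ell_N-1$ and control the resulting geometric-type sum. Write $a_N:=\kappa_N\Psi_N(1+\log N)$. The crucial point is that $a_N$ grows like $\log N$ up to the factor $\kappa_N\Psi_N$, which by \eqref{eq:Psiub} is eventually at least $(1+\eps)/\log N$ for some $\eps>0$, so $\log a_N\le \log\log N+\log(\kappa_N\Psi_N)+o(1)$; combined with the definition of $\ell_N$ this gives $a_N^{\ell_N}\le N^{1+o(1)}$, but we need a sharper accounting because of the $1/N^2$ prefactor — in fact $a_N^{\ell_N}=N\cdot N^{o(1)}$ after a careful matching of the exponent $\ell_N\log a_N$ against $\log N$, using that the $+1$ inside $(1+\log N)$ and inside $\ell_N$'s ceiling contribute only lower-order terms. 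Since $a_N\to\infty$, the geometric sum $\sum_{l\le \ell_N-1}a_N^l$ is dominated by its top term, of order $a_N^{\ell_N}$, so the total is at most $\frac{C}{\log N}\cdot N^{1+o(1)}/N^{2}\cdot$(something) — I must track this constant honestly, and the output of this bookkeeping is that $\sum_{l} \PP(d_N(U,V)=l)\to 0$ provided the strict inequality in \eqref{eq:Psiub} is used to absorb the $(1+\log N)$ versus $\log N$ and ceiling discrepancies. This last point — verifying that the strict $\liminf>1$ in \eqref{eq:Psiub} is exactly what is needed to beat the polynomial corrections hidden in the ceiling and the "$+1$" terms — is the main obstacle, and it is where the proof must be done with care rather than by soft estimates; everything else is routine union-bound bookkeeping.
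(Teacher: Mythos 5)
Your approach is the paper's: a first--moment bound over self-avoiding paths, telescoping $\prod_j (v_jv_{j+1})^{-1/2}$ into $\frac{1}{\sqrt{v_0v_l}}\prod_{j=1}^{l-1}v_j^{-1}$ and summing the interior factors to $(\log N)^{l-1}$. The only structural difference is at the endpoints: the paper restricts to the event $U,V\ge \eps N/3$ (at probability cost $2\eps/3$) so that $1/\sqrt{v_0v_l}\le 3/(\eps N)$, whereas you sum $\sum_v v^{-1/2}\le 2\sqrt N$ over each endpoint. Both give an $O(1/N)$ contribution after averaging over $U,V$, so your variant is fine and in fact dispenses with the $\eps$.

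Two points in your bookkeeping need repair before the argument closes. First, the displayed ``equality'' drops a factor $1/N$: since $\frac{1}{N^2}(2\sqrt N)^2=4/N$, the right-hand side should read $\frac{4}{N(1+\log N)}\,a_N^l$ with $a_N=\kappa_N\Psi_N(1+\log N)$. Second, and more seriously, the soft estimate $a_N^{\ell_N}=N^{1+o(1)}$ is not strong enough to conclude: an unquantified $N^{o(1)}$ could swallow the surviving $1/\log N$, and there is no upper bound on $\kappa_N\Psi_N$ in the hypotheses (in the application it is of order $(\log N)^{2\alpha}$). What you actually need, and what is true, is $a_N^{\ell_N-1}=O(N)$. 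Writing $a'_N=\kappa_N\Psi_N\log N$, one has
$(\ell_N-1)\log a_N<\frac{\log N}{\log a'_N}\bigl(\log a'_N+\log\tfrac{1+\log N}{\log N}\bigr)\le\log N+\frac{1}{\log a'_N}\le\log N+\frac{1}{\log(1+\eps)}$,
using $\lceil x\rceil-1<x$ and the liminf hypothesis \eqref{eq:Psiub} to bound $\log a'_N\ge\log(1+\eps)>0$ for large $N$. With this, and noting that the geometric sum is dominated by a constant times its top term because $a_N\ge 1+\eps$ eventually (your claim $a_N\to\infty$ is not guaranteed, but is not needed), the total is $O(1/\log N)\to0$, which completes the proof along exactly the lines the paper follows.
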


\begin{proof}
We first observe that for any positive sequence $(a_i)_{i=0}^\infty$ satisfying ${a_{i+1}}/{a_i}\geq 1+\delta,$ for all $ i\geq 0$ and some fixed $\delta>0$, we can find a constant $C>0$ with \begin{equation}\label{eq:auxState}\sum_{i=0}^K a_i\leq C a_K, \ \ \textrm{for all } K\in\NN.\end{equation} 
Let $1\leq l\leq L=L(N)=\lfloor {\log N}/{(\log\log N+\log(\kappa_N\Psi_N))}\rfloor$ and $P=(v_0,\dots,v_l)$ be self-avoiding. Assumptions \eqref{eq:cor} and \eqref{eq:conpb} imply that
\begin{align*}
&\PP(P\subset \Gg_N)\leq \kappa_N^l\prod_{j=0}^{l-1}\frac{\Psi_N}{\sqrt{v_jv_{j+1}}}\leq\frac{\big(\kappa_N\Psi_N\big)^l}{\sqrt{v_0v_l}}\prod_{j=1}^{l-1}\frac{1}{v_j}.
\end{align*}
For $v,w\in[N]$ and $\mathcal{P}_l(v,w)$ denoting the set of self-avoiding paths of length $l$ from $v$ to $w$,
\begin{equation*}
\begin{aligned}
 \PP(d_N(v,w)\leq L)\leq & \sum_{l=1}^{L}\sum_{(v_0,\dots,v_l)\in \mathcal{P}_l(v,w)}\frac{\big(\kappa_N\Psi_N\big)^l}{\sqrt{vw}}\prod_{j=1}^{l-1}\frac{1}{v_j} \\ 
\leq & \sum_{l=1}^{L}\frac{\big(\kappa_N\Psi_N\big)^l}{\sqrt{vw}}\Big(\sum_{j=1}^N\frac{1}{j}\Big)^{l-1}\leq\frac{1}{\sqrt{vw}}\sum_{l=1}^{L}\kappa_N^l\Psi_N^{l}(\log N)^{l-1}.
\end{aligned}
\end{equation*}
By \eqref{eq:Psiub}, the terms in the last sum grow at least exponentially in $l$ for all sufficiently large $N$, so using \eqref{eq:auxState} we infer the existence of an independent constant $C>0$ such that
\begin{equation}\label{eq:pathbd}
 \PP(d_N(v,w)\leq L)\leq C\frac{\big(\kappa_N\Psi_N\log N\big)^L}{\sqrt{vw}\log N}.
\end{equation}
For any $\eE\in(0,1)$, the probability that one of the vertices $U,V$ is smaller than ${\eE}/{3} N$ is bounded by ${2\eE}/{3}$ and thus using \eqref{eq:pathbd} on the complement of this event results in
\begin{align*}
\PP(d_N(V,W)\leq L) & \leq \sum_{v,w\geq \frac{\eE}{3} N}\PP(d_N(v,w)\leq L)\PP(V=v,W=w)+\frac{2\eE}{3}\\
&\leq 3C\frac{\big(\kappa_N\Psi_N\log N\big)^L}{\eE N\log N}+\frac{2\eE}{3} = \frac{3C}{\eE \log N}\e^{L[\log\log N +\log(\kappa_N\Psi_N)]-\log N} +\frac{2\eE}{3}\\ 
& \leq \frac{3C}{\eE\log N}+\frac{2\eE}{3},
\end{align*} 
and the proof is complete.
\end{proof}

The lower bounds on the distances in \cref{thm:PA} can now be obtained by verifying the assumptions of \cref{lem:1stMomentBound}.
\begin{prop}[Lower bounds for PA]\label[prop]{prop:lowerbounds}
The preferential attachment model $\Gg_N$ with attachment rule $f$ of the form \eqref{eq:fcondition} satisfies $$\lim_{N\to\infty}\PP\Big(d_N(U,V)\geq \Big(\frac{1}{1+\alpha}-\delta\Big)\frac{\log N}{\log\log N}\Big)=1,$$ for every $\delta>0$ and independently and uniformly chosen vertices $U,V\in\Gg_N.$
\end{prop}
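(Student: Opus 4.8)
The plan is to apply \cref{lem:1stMomentBound} to the preferential attachment graph $\Gg_N$, so the main task is to verify its three hypotheses with the right choice of $\kappa_N$ and $\Psi_N$, and then to read off the stated lower bound from the conclusion of that lemma. The natural choices are $\Psi_N = C(1\vee\log N)^\alpha$ up to a sub-polynomial correction (coming from the degree bounds in \cref{prop:expbounds1}) and $\kappa_N$ a constant, or more precisely $\kappa_N\Psi_N = (\log N)^{\alpha+o(1)}$, so that $\log\log N + \log\Psi_N + \log\kappa_N = (1+\alpha+o(1))\log\log N$ and the ceiling in \cref{lem:1stMomentBound} becomes $\big(\tfrac1{1+\alpha}+o(1)\big)\tfrac{\log N}{\log\log N}$, which gives the proposition after absorbing the $o(1)$ into the arbitrary $\delta>0$.

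First I would establish the edge-probability bound \eqref{eq:conpb}. An edge between $v<w$ is present in $\Gg_N$ iff $w\to v$ was inserted when vertex $w$ arrived, which happens with probability $\EE[f(\Zz[v,w-1])/(w-1)\wedge 1]$; bounding this by $\EE f(\Zz[v,w-1])/(w-1)$ and invoking the first-moment bound of \cref{prop:expbounds1} (with $k=0$, and after replacing the true attachment rule by one of the form \eqref{eq:glimitalpha} dominating it, which is legitimate by concavity and \eqref{eq:fcondition}) gives $\PP(v\leftrightarrow w)\le C\sqrt{(w-1)/v}\,(1\vee\log(w/v))^\alpha/(w-1) \le \Psi_N/\sqrt{vw}$ with $\Psi_N$ of the claimed order, since $(1\vee\log(w/v))^\alpha\le(\log N)^\alpha$ uniformly in $v<w\le N$ and $\sqrt{w/(w-1)}\le\sqrt2$.

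Next I would verify the correlation bound \eqref{eq:cor}. The key structural fact is that the events $\{v_j\leftrightarrow v_{j+1}\}$ along a self-avoiding path depend on the degree evolutions $(\Zz[m,\cdot])_m$, which are independent Markov chains; the only source of dependence between different edges is that two edges $v_j\leftrightarrow v_{j+1}$ and $v_{j+1}\leftrightarrow v_{j+2}$ with $v_j,v_{j+2}<v_{j+1}$ both read off increments of the same chain $\Zz[v_{j+1},\cdot]$, and an edge $v_{j-1}\leftrightarrow v_j$ with $v_j<v_{j-1}$ depends on $\Zz[v_j,\cdot]$ which also influences later edges at $v_j$. Conditioning on the presence of one edge can only \emph{increase} the degree of the relevant older endpoint, and hence, by the concavity of $f$ and the stochastic-domination results \cref{lem:DM11Lemma2.9} and \cref{lem:DM11Lemma2.10}, the conditional probability of a second incident edge is comparable to the unconditional one up to a bounded factor; iterating along the path and using \cref{prop:expbounds1} to control the resulting shifted first moments yields \eqref{eq:cor} with a constant $\kappa_N=\kappa$. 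This is the step I expect to be the main obstacle: one must carefully order the revelation of the edges along the path (say from the youngest vertex outwards, or handle the at-most-two younger neighbours of each vertex separately) so that each conditioning event is of the form handled by \cref{lem:DM11Lemma2.10} (fixed jump times of a degree chain), and then track that the accumulated multiplicative error stays bounded independently of the path length $l$ and of $N$ — this is where the uniform constants in \cref{prop:expbounds1} and the $k$-independence of the stochastic domination are essential.

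Finally, \eqref{eq:Psiub} holds trivially since $\kappa_N\Psi_N\log N\to\infty$, so \cref{lem:1stMomentBound} applies and gives $d_N(U,V)\ge\lceil \log N/((1+\alpha+o(1))\log\log N)\rceil$ with high probability; choosing $N$ large enough that the $o(1)$ error is below $\delta$ completes the proof of \cref{prop:lowerbounds}.
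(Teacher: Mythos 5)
Your proposal follows essentially the same route as the paper: verify the hypotheses of \cref{lem:1stMomentBound} with $\Psi_N$ of order $(\log N)^\alpha$ and $\kappa_N$ a constant, using \cref{prop:expbounds1} for the edge-probability bound \eqref{eq:conpb} and the stochastic domination \cref{lem:DM11Lemma2.10} for the correlations, then extend from \eqref{eq:glimitalpha} to general $f$ of the form \eqref{eq:fcondition} by monotone coupling. The one place the paper is crisper is the correlation bound \eqref{eq:cor}, where instead of a delicate revelation ordering it simply observes that a self-avoiding path uses each degree evolution at most twice and, when twice, for two \emph{consecutive} edges sharing the older vertex, so by independence of the evolutions $\PP(P\subset\Gg_N)$ factorises exactly into terms $\PP(u\to v\leftarrow w)$ and $\PP(u\to v)$, and a single application of \cref{lem:DM11Lemma2.10} per pair gives $\kappa_N=C_{\ref{prop:expbounds1}}(1)/C_{\ref{prop:expbounds1}}(0)$ without any accumulation issue.
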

\begin{proof}
Let $P=(v_0,\dots,v_n)$ be a self-avoiding path along vertices in $[N].$ By definition of the preferential attachment mechanism $\PP(P\subset\Gg_N)$ can be decomposed in the following way: each edge $(u,v)$ in $P$ corresponds to a jump in the degree evolution of the vertex $u\wedge v$ and since $P$ is self-avoiding, any given degree evolution can feature at most twice in the formation of $P$. Moreover, if a degree evolution is used twice, then it is used to obtain two consecutive edges of $P$. By independence of the degree evolutions, $\PP(P\subset \Gg_N)$ therefore must factorise into terms of the form $\PP(u\rightarrow v \leftarrow w)$ and $\PP(u\rightarrow v)$, corresponding to two jumps and one jump of the repsective degree evolution. To obtain a bound on $\PP(u\rightarrow v \leftarrow w)$ fix $v<u<w$. By \cref{lem:DM11Lemma2.10}, the process $(\Zz'[v,n])_{n\geq v}$, started at $\Zz'[v,v]=1$ and evolving according to the law of an unconditioned degree evolution, stochastically dominates the process $(\Zz[v,n])_{n\geq v}$ conditional on $\Zz[v,u]=1$ and hence $$\PP(\Delta\Zz[v,w]=1|\Delta \Zz[v,u]=1)\leq \PP^1(\Delta\Zz[v,w]=1).$$ We obtain $$\PP(u\rightarrow v \leftarrow w)=\PP(\Delta\Zz[v,w]=\Delta \Zz[v,u]=1)\leq \PP(\Delta\Zz[v,u]=1)\PP^1(\Delta\Zz[v,w]=1),$$ and in combination with \cref{prop:expbounds1} this shows that the edge correlation bound \eqref{eq:cor} is satisfied with $\kappa_N={C_{\ref{prop:expbounds1}}(1)}/{C_{\ref{prop:expbounds1}}(0)}.$ According to Proposition~\ref{prop:expbounds1}, we also have $$\PP(w\rightarrow v)=\frac{\EE f(\Zz[v,w])}{w}\leq \frac{C_{\ref{prop:expbounds1}}(0)(\log\sfrac wv )^\alpha}{\sqrt{vw}} $$ and thus the bound \eqref{eq:conpb} holds for $\Psi_N=C_{\ref{prop:expbounds1}}(0)(\log N)^\alpha$, in the case where the attachment rule $f$ is of the form \eqref{eq:glimitalpha}. For such $f$ the distance bound follows therefore for any choice of $\delta\in(0,\frac{1}{1+\alpha})$ immediately from \cref{lem:1stMomentBound}.
\medskip\pagebreak[3]

For $f$ of the more general form \eqref{eq:fcondition}, we note that $\bar{f}\geq f$ implies that the respective networks satisfy $\bar{\Gg}_N\geq \Gg_N$ stochastically for all $N\in\NN$, where $\geq$ is the partial order given by inclusion on the edge sets of graphs with the same vertex set, so that distances in $\Gg_N$ dominate those in $\bar{\Gg}_N$. By~\eqref{eq:fcondition}, for every $\varepsilon>0$, there is $k_0\in\NN$ such that, for all $k\in\NN_0$, $$f(k)\leq f(k_0)+\frac{k}{2}+\frac{\alpha+\varepsilon}{2}\frac{k}{1 \vee \log k}=:\bar{f}(k).$$ Choosing $\varepsilon$ suitably in dependence on $\delta$ thus allows to deduce the bound for general $f$ from the special case treated in the previous paragraph.\end{proof}

\section{Proof of upper bounds -- preferential attachment}
To prove the upper bound of Theorem~\ref{thm:PA} we need to find short paths connecting two uniformly chosen vertices, say $U$ and $W$. We use the concept of an inner core: we will show that with high probability $U$ and $W$ have at most  distance $ (1+o(1))(2+2\alpha)^{-1} \log N/\log \log N$ to a small set of vertices that has uniformly bounded diameter, see for instance \cite{chung2003,dommers_diameters_2010} for similar ideas.

Starting from a uniform vertex $U\in\Gg_N$ we perform essentially a breadth-first search, a precise definition of the exploration algorithm is given below. Roughly speaking, at each exploration stage $k$ the set of vertices at distance $k$ from $U$ is assessed using the the score $\xi$ introduced in \cref{lem:degmart2}, i.e. for a set $V\subset[N]$ and $p\in\N$ we call $\xi^p(V,N):=\sum_{v\in V}\xi(v,N)^p$ the total $p$-score of the set $V$. The proof is based on the following three auxiliary results.
\begin{itemize}
\item By a local approximation argument we first show that with high probability either the local exploration around $U$ will quickly lead to a configuration with a high score or the vertex is in a small component, see \cref{prop:localknown}.
\item Using moment estimates we show that starting in a configuration with sufficiently high score, the score will quickly grow from generation to generation with high probability, see \cref{prop:scoregrowth}, until we find a configuration with score exceeding $\sqrt{N}/(\log N)^{2\alpha+2}$.
\item Finally, we show that a subset with score exceeding $\sqrt{N}/N^{2\alpha+2}$ is with high probability connected to a dense subgraph among the oldest vertices. This subgraph is of bounded diameter, see \cref{prop:cores}.
\end{itemize}

Recall our notations $\xi(\cdot, \cdot)$ and $\psi^k(\cdot,\cdot)$, introduced in \cref{lem:degmart2} and \eqref{eq:PSIDEF}, respectively, which are repeatedly used throughout the following sections. If the graph size~$N$ is fixed, we also write $\xi(\cdot),\psi^k(\cdot)$ for $\xi(\cdot,N),\psi^k(\cdot,N)$ for ease of notation. Note that, for $m\leq n\leq N$, \eqref{eq:xitoroot} allows us to appproximate the ratios $\xi(m,n)/n$ as $$\frac{\xi(m,n)}{n}\approx \frac{\xi(m)\xi(n)}{N},$$ we use this approximate factorisation frequently in subsequent proofs. Here and throughout the article, `$f_1(\cdot)\approx f_2(\cdot)$' means that the ratio of the functions $f_1,f_2$ is bounded away from $0$ and $\infty$ uniformly in all arguments.  

\subsection{Local approximation results -- initial phase}

A \emph{configuration} $ \mathbf{e}$ associates with every vertex  a state in the set
$\{$veiled, active, dead$\}$, and with every potential edge a state in
the set $\{0,1,$ unknown$\}$, the state `unknown' capturing the absence of the information whether an edge is contained in $\Gg_N$ or not. The graph associated with a configuration consists of the
vertex set $[N]$ and all edges in state~$1$. The score of a configuration
is the cummulative score of all active vertices in the configuraton.
\medskip

We now describe the \emph{exploration process} that we follow in the initial phase as well
as the main phase. Its definition uses a non-increasing sequence $(\ell_k)_{k\in\N}$ of truncation levels,
which are set to $\ell_k=1$, for all $k\in\N$, in the initial phase.
The exploration is an inhomogeneous Markov chain $(\mathcal{E}_k)_{k \in \N}$ on the space of configurations, which we define on the probability space associated with the random graph $\Gg_N$.
We assume that we start with an initial configuration~$\mathcal{E}_0$, and the graph  associated with 
this configuration is a tree.%
\medskip%

In the $k$th exploration step we go through all active vertices in $\mathcal{E}_{k-1}$, starting with the vertex 
of smallest label and proceeding in increasing order of labels until all active vertices are treated. 
For each such vertex $v$ we
\begin{enumerate}
\item inspect all potential edges connecting~$v$ to veiled vertices in $\{\ell_k,\ldots,N\}$;
\item If the edge does not exist in $\Gg_N$ its state becomes $0$ and the veiled vertex remains so;
\item If it does exist in $\Gg_N$ its state becomes $1$ and the veiled vertex is declared \emph{pre-active}.
\end{enumerate}
Once all active vertices are explored, they are declared dead, the pre-active vertices are declared active and the  exploration step ends. Note that, if we start with a configuration associated with a tree, the configuration at the end of an exploration step is again associated with a tree. We call such configurations \emph{proper}.
The sets of active, veiled and dead vertices of $ \mathbf{e}$ are denoted by $\mathrm{active}( \mathbf{e}), \mathrm{veiled}( \mathbf{e})$ and $\mathrm{dead}( \mathbf{e}),$ respectively.
\medskip

The following proposition (and nothing else in this paper) relies on a coupling of local neighbourhoods in~$\mathcal{G}_N$ with the `idealised neighbourhood tree'  introduced in \cite[Section 1.3]{dereich_random_2013}. The probability that this tree is infinite is denoted by $p(f)$. It coincides with  the asymptotic proportion of vertices in the connected component of a uniformly chosen vertex, and hence with the probability that such a vertex is in the giant component.  \pagebreak[3]

\begin{prop}\label[prop]{prop:localknown}
Suppose $U\in\Gg_N$ is uniformly chosen, determining an initial configuration in which 
$U$ is active, all other vertices are veiled and all edges are in state unknown. Denote by 
$\xi(V):=\sum_{v\in V} \xi(v)$ the score associated with a set $V\subset[N]$ of vertices. 
Given $\eE>0$ and $s_0>0$ there exists $k_0=k_0(s_0,\eE)\in\NN$, such that, for sufficiently large $N,$
we have
\begin{equation*}
\begin{aligned}[alignment]
\PP\big(\textrm{there exists some } k\leq k_0 \textrm{ and a set } & A\subset\mathrm{active}(\Ee_k) \textrm{ satisfying } \xi(A)\geq s_0  \,\xi(\min A) \big)\\
&\geq\; p(f)-\eE.
\end{aligned}
\end{equation*} 
\end{prop}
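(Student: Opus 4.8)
The plan is to deduce \cref{prop:localknown} by coupling the exploration process started at a uniform vertex $U$ with the idealised neighbourhood tree of \cite[Section 1.3]{dereich_random_2013}, and to show that the score $\xi(A)/\xi(\min A)$ can be made large on the event that the tree survives to a bounded generation. First I would recall that, up to generation $k_0$, the neighbourhood of $U$ in $\Gg_N$ agrees with the first $k_0$ generations of the idealised tree with probability tending to one as $N\to\infty$ (for each fixed $k_0$): this is the local weak convergence statement underlying \cite{dereich_random_2013}, and the error comes only from the rare event that two explored vertices coincide or that an explored vertex has label smaller than the truncation level, which here is $1$. On this good event the active set $\mathrm{active}(\Ee_k)$ is in bijection with generation $k$ of the tree, and for each such vertex $v$ the score $\xi(v,N)\approx \sqrt{N/v}$ by \eqref{eq:xitoroot}, so that $\xi(v)/\xi(\min A)\approx \sqrt{(\min A)/v}$.

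The key point is then a statement about the idealised tree: conditionally on survival, with probability at least $p(f)-\eE/2$ one can, within a bounded number $k_0$ of generations, find a generation-$k$ set $A$ whose relative score $\sum_{v\in A}\sqrt{(\min A)/v}$ exceeds any prescribed $s_0$. I would argue this by noting that the idealised tree conditioned on survival contains, after finitely many generations, vertices of arbitrarily small relative label (equivalently, arbitrarily large score relative to their ancestors): the offspring weights in the neighbourhood tree have the same power-law tails as the degree distribution \eqref{empi}, so the tree is supercritical and, conditioned on survival, its total score grows without bound. More quantitatively, the number of vertices in generation $k$ together with the attached scores forms a multiplicative cascade whose mean grows geometrically (because $\EE f(\Zz)\gg \sqrt{\cdot}$, i.e. $\psi^k\to\infty$, by \eqref{psisize}), so a bounded $k_0$ suffices to push the conditional probability of $\{\xi(\text{generation } k_0)\ge s_0\,\xi(\min)\}$ above $p(f)-\eE/2$. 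Transferring this back through the coupling, and absorbing the coupling error (which is $o(1)$ once $k_0$ is fixed) into a further $\eE/2$, yields the asserted bound $p(f)-\eE$ for all large $N$.

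The main obstacle I expect is making the passage from the idealised tree to the finite graph quantitatively clean, specifically controlling the score $\xi$ — a quantity defined in terms of the finite label set $[N]$ — through a coupling with an object (the neighbourhood tree) whose natural parameters are the limiting offspring weights rather than labels. One must check that, on the good coupling event, the labels of the generation-$k$ vertices are jointly close (after rescaling by $N$) to the corresponding marks of the tree, uniformly over the bounded number of vertices involved, so that the approximate factorisation $\xi(m,n)/n\approx \xi(m)\xi(n)/N$ can be applied and the sum $\sum_{v\in A}\xi(v)$ is genuinely comparable to the cascade sum in the tree. A secondary technical nuisance is that $\min A$ itself is random and could a priori be small (of order $1$), inflating $\xi(\min A)$; but since we only need $\xi(A)\ge s_0\,\xi(\min A)$ with $A$ taken as an entire generation, $\min A$ is automatically an ancestor of all of $A$ and the ratio is $\ge 1$ trivially, so the real content is exhibiting enough mass of descendants with substantially larger score — which is precisely the supercriticality-plus-$\psi^k\to\infty$ input above.
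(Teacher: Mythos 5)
Your high-level plan — couple the local exploration of $\Gg_N$ with the idealised neighbourhood tree $\mathfrak{T}$ of \cite{dereich_random_2013}, establish the score-ratio claim on the tree, and transfer it back, absorbing the coupling error once $k_0$ is fixed — is the same route the paper takes (via \cref{prop:frakT}), and your identification of the $\psi$-factor (i.e.\ $\EE f(Z_u)\gg e^{u/2}$, cf.\ \eqref{psisize}) as the mechanism that drives the score up is correct. However, the central tree-side claim is not argued rigorously, and one step is wrong. You take $A$ to be the entire generation and write that ``$\min A$ is automatically an ancestor of all of $A$''; it is not — $\min A$ belongs to generation $k$ itself, so it is a sibling of the other vertices in $A$, not their ancestor (it is simply the oldest vertex of that generation, hence the one with the largest score). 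More importantly, to get $\sum_{v\in A}\xi(v)\geq s_0\,\xi(\min A)=s_0\max_{v\in A}\xi(v)$ it is not enough that the \emph{expected total score} of a generation grows geometrically: $\xi(\min A)$ grows as well, and in a branching random walk whose ``left'' (older-ancestor) displacements have power-law-type tails a single particle can dominate the generation sum, so ``mean grows'' does not by itself control the sum-over-max ratio.

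The paper avoids this by working with a \emph{subset} of a generation rather than the whole generation, so that the maximum is controlled by construction. In \cref{prop:frakT} one first exhibits (using \cite[Lemma 3.3]{dereich_random_2013}) an adapted sequence of vertices $v(i)$ whose positions drift to $-\infty$, and then takes $A(i)$ to be the set of \emph{younger} offspring of $v(i)$, i.e.\ children positioned in $[x_{v(i)},0]$. By construction $\max_{u\in A(i)}\bar\chi(u)\leq\bar\chi(v(i))$, while $\EE\bigl[\sum_{u\in A(i)}\bar\chi(u)\,\big|\,x_{v(i)}=x\bigr]\asymp(-x)^\alpha\bar\chi(v(i))$ by the continuous analogue of \cref{prop:expbounds1} and \cref{prop:expbounds2}; the ratio therefore diverges as $-x\to\infty$. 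A second-moment bound plus the Paley–Zygmund inequality then gives a probability bounded away from zero that $\sum_{u\in A(i)}\bar\chi(u)\geq s\,\bar\chi(v(i))\geq s\max_{u\in A(i)}\bar\chi(u)$, and conditional independence of the $A(i)$ over $i$ yields the almost-sure existence; taking $k_0$ large then gives the claimed probability bound, and the coupling transfers it to $\Gg_N$. You would need this additional construction (or some other device that pins down the maximum while the sum is estimated from below) to close the gap in your step 2; the rest of the outline is sound.
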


As the proof of \cref{prop:localknown} is obtained by application of the results of \cite{dereich_random_2013} and is therefore not self-contained we defer it to \cref{knownresults}.

\subsection{Score growth -- main phase}\label{sec:truncsec}

Our next goal is to fix a sequence $(\ell_k)_{k\geq 1}$ which guarantees that the score of encountered configurations during an exploration of the giant component grows with high probability at a certain deterministic rate. We rely on a careful analysis of the exploration process and the following concentration inequality.
\begin{lem}[Lower tail bound for independent sums, {\cite[Theorem 2.7]{chung_complex_2006}}]\label[lem]{lem:CL27} Let $I$ be a finite set and $(X_i)_{\in I}$ be independent, nonnegative random variables. Then, for any $\lambda>0$, $$\PP\Big(\sum_{i\in I}X_i\leq \sum_{i\in I}\EE X_i-\lambda \Big)\leq \e^{-\frac{\lambda^2}{2\sum_{i\in I}\EE X^2_i}}.$$ 
\end{lem}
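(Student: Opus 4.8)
The plan is to apply the standard exponential (Chernoff-type) argument for the lower tail, using independence to factorise the moment generating function and the nonnegativity of the summands to control each factor. Write $S:=\sum_{i\in I}X_i$, $\mu:=\EE S=\sum_{i\in I}\EE X_i$ and $v:=\sum_{i\in I}\EE X_i^2$; we may assume $v<\infty$, since otherwise the right-hand side of the asserted inequality equals $1$ and there is nothing to prove. For every $t>0$, Markov's inequality applied to the nonnegative random variable $\e^{-tS}$ gives
$$\PP\big(S\leq\mu-\lambda\big)=\PP\big(\e^{-tS}\geq\e^{-t(\mu-\lambda)}\big)\leq\e^{t(\mu-\lambda)}\,\EE\big[\e^{-tS}\big],$$
and by independence $\EE\big[\e^{-tS}\big]=\prod_{i\in I}\EE\big[\e^{-tX_i}\big]$.

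The key step is to bound each factor using the elementary pointwise inequality $\e^{-u}\leq 1-u+\tfrac12 u^2$, which holds for all $u\geq 0$; this is the one place where nonnegativity of the $X_i$ enters, applied with $u=tX_i\geq 0$. Taking expectations and then using $1+x\leq\e^x$ yields
$$\EE\big[\e^{-tX_i}\big]\leq 1-t\,\EE X_i+\tfrac12 t^2\,\EE X_i^2\leq\exp\big(-t\,\EE X_i+\tfrac12 t^2\,\EE X_i^2\big),$$
so that $\EE\big[\e^{-tS}\big]\leq\exp\big(-t\mu+\tfrac12 t^2 v\big)$. Substituting this into the Markov bound and cancelling the $t\mu$ terms gives $\PP\big(S\leq\mu-\lambda\big)\leq\exp\big(-t\lambda+\tfrac12 t^2 v\big)$ for every $t>0$.

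It then remains to optimise over the free parameter: the exponent $-t\lambda+\tfrac12 t^2 v$ is a convex quadratic in $t$ minimised at $t=\lambda/v>0$, where it equals $-\lambda^2/(2v)$. This choice of $t$ yields $\PP\big(S\leq\mu-\lambda\big)\leq\e^{-\lambda^2/(2v)}$, which is exactly the claim. The argument is entirely routine and I expect no genuine obstacle; the only points deserving a word of care are the validity of $\e^{-u}\leq 1-u+\tfrac12u^2$ on $[0,\infty)$ (it fails for $u<0$, which is precisely why nonnegativity of the $X_i$ is required) and the harmless reduction to the case $v<\infty$. This is the content of \cite[Theorem~2.7]{chung_complex_2006}.
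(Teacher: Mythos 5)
Your proof is correct and complete: the exponential Markov bound, the factorisation by independence, the pointwise inequality $\e^{-u}\leq 1-u+\frac12u^2$ for $u\geq 0$ (which is where nonnegativity enters), and the optimisation $t=\lambda/v$ are all carried out properly, and the reduction to $v<\infty$ is handled. The paper itself gives no proof of this lemma, only the citation to \cite[Theorem 2.7]{chung_complex_2006}; your argument is precisely the standard one underlying that reference, so there is nothing further to compare.
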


We start the main phase in a proper configuration $\mathcal{E}_0$ with the property that  the score of the set $A$ of active vertices in the configuration satisfies $\xi(A)\geq s_0\xi(\min A)$ from some $s_0$ to be specified later. From this initial configuration
we restart the exploration process $( \mathcal{E}_k \colon k\in\N)$  using a new truncation sequence $(\ell_k)_{k\in\N}$.  As before, each $ \mathcal{E}_k$ is a proper configuration. 
While obtaining gradually more information about $\Gg_N$, we need to control the correlation between discovered edges. This is done in the following two lemmas, which provide upper and lower bounds on conditional jump probabilities of a degree evolution $\Zz[m,\cdot]$ given disjoint sets $I_1,I_0$ of times at which $\Zz[m,\cdot]$ is known to jump or to stay constant, respectively.

\begin{lem}[Lower bound for conditional jump probabilities]\label[lem]{lem:lbCond}
For every $k\in\N$ there exists $n_0\in\N$ and a constant $C(k)>0$ 
such that for every $n_0 \leq m \leq N$,
and disjoint sets $I_0,I_1\subset\{m,\dots,N-1\}$ with $\#I_1\leq k-1$ and
\begin{equation}\label{def:thin}\xi(m)\xi(I_0)\leq C(k)\, \frac{N}{2\psi^{k}(n_0,N)},\end{equation} 
the events $A_i:=\{\Delta\Zz[m,l]=\1\{i=1\}$  for all $l\in I_i\}$,  for $i\in\{0,1\}$, satisfy
$$\PP(\DD\Zz[m,j]=1|A_0,A_1)\geq\sfrac 12\PP(\DD\Zz[m,j]=1|A_1)\ \ \text{ for all }j \in\{m,\dots,N-1\} \setminus I_0.$$
\end{lem}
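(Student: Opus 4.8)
\textbf{Proof plan for \cref{lem:lbCond}.} The goal is to show that conditioning a single degree evolution $\Zz[m,\cdot]$ on a sparse set $I_1$ of forced jumps and a set $I_0$ of forced non-jumps changes the probability of a jump at a new time $j\notin I_0$ by at most a factor $2$ compared with conditioning on the forced jumps alone. The plan is to first remove the $I_0$-conditioning by stochastic domination and then control the cost of the $I_1$-conditioning by a crude second-moment estimate that is where the smallness hypothesis \eqref{def:thin} is used.

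First I would dispose of the conditioning on $A_0$. By \cref{lem:DM11Lemma2.9}, conditioning a degree evolution to stay constant at the times in $I_0$ only makes it stochastically smaller, and since $\Delta\Zz[m,j]=1$ has probability $f(\Zz[m,j])/j$ which is \emph{increasing} in $\Zz[m,j]$ (here $f$ is increasing), conditioning on $A_0$ can only decrease $\PP(\Delta\Zz[m,j]=1\mid A_1)$; so the inequality we must prove is really
\[
\PP(\Delta\Zz[m,j]=1\mid A_0,A_1)\geq \tfrac12\,\PP(\Delta\Zz[m,j]=1\mid A_1),
\]
and it suffices to bound the \emph{loss} incurred by further conditioning on $A_0$, i.e. to show that the extra down-shift of $\Zz[m,j]$ caused by $A_0$ (on top of $A_1$) costs at most a factor $2$. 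I would make this quantitative by writing
\[
\PP(\Delta\Zz[m,j]=1\mid A_0,A_1)=\EE\Big[\tfrac{f(\Zz[m,j])}{j}\,\Big|\,A_0,A_1\Big],
\]
and comparing with the same expression under $A_1$ only, using that under $A_1$ the process is, up to the $i\le k-1$ forced jumps handled by \cref{lem:DM11Lemma2.10}, dominated by an unconditioned evolution started at $k$; the difference between the two conditional means is then controlled by $\PP(\exists\, l\in I_0:\Delta\Zz[m,l]=1\mid A_1)$ times the size of $\EE[f(\Zz[m,j])/j\mid \cdots]$ on that bad event.

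The key quantitative step is then to bound $\PP(\exists\, l\in I_0:\Delta\Zz[m,l]=1\mid A_1)$ by a union bound: it is at most $\sum_{l\in I_0}\PP(\Delta\Zz[m,l]=1\mid A_1)=\sum_{l\in I_0}\EE[f(\Zz[m,l])/l\mid A_1]$. Using \cref{lem:DM11Lemma2.10} to dominate the $A_1$-conditioned evolution (with $\#I_1\le k-1$ forced jumps) by an unconditioned evolution started in state $k$, and then \cref{prop:expbounds1} to bound $\EE^k f(\Zz[m,l])\le C_{\ref{prop:expbounds1}}(k)\sqrt{l/m}\,(1\vee\log\sfrac lm)^\alpha$, the sum becomes $\lesssim \sum_{l\in I_0} l^{-1/2} m^{-1/2}(\log\sfrac Nm)^\alpha$. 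Rewriting $l^{-1/2}$ in terms of $\xi(l)$ and $m^{-1/2}$ in terms of $\xi(m)$ via \eqref{eq:xitoroot} (so $l^{-1/2}\approx \xi(l)/\sqrt N$ up to the bounded correction $\delta(\cdot)$), this is $\approx \frac{\xi(m)\,\xi(I_0)}{N}\,(\log\sfrac Nm)^\alpha$, which in view of \eqref{eq:PSIDEF}--\eqref{psisize} (so that $\psi^k(n_0,N)\approx(\log N)^\alpha$) is exactly what the hypothesis $\xi(m)\xi(I_0)\leq C(k)\frac{N}{2\psi^k(n_0,N)}$ is designed to make small; choosing the constant $C(k)$ small enough (and $n_0$ large enough that all the $\delta(m)$-type corrections from \eqref{eq:xitoroot} and the "sufficiently large $i$" thresholds in \cref{prop:expbounds1} are harmless) forces this probability below $\tfrac12$, and hence the relative loss from the $A_0$-conditioning below a factor $2$, completing the proof.

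The main obstacle I anticipate is bookkeeping the dependence of constants: $C(k)$ and $n_0$ must be chosen \emph{before} the sets $I_0,I_1$ and the starting point, so one has to make sure the union bound and the domination estimates are uniform in $m\ge n_0$, in $N\ge m$, and in all admissible $I_0,I_1$ with $\#I_1\le k-1$, and that the bound from \cref{prop:expbounds1} is used in the form valid for \emph{all} $n\ge m$ rather than only large ones. A secondary subtlety is that \cref{lem:DM11Lemma2.10} needs $m>k+i$, which is why the hypothesis is stated for $m\ge n_0$ with $n_0$ depending on $k$; one should check the edge case where $j$ lies between consecutive elements of $I_1$ separately but this only reshuffles which domination one invokes and does not change the estimate.
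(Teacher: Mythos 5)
Your skeleton matches the paper's up to the last step: both arguments express the loss from the extra conditioning on $A_0$ via inclusion--exclusion, take a union bound over $l\in I_0$, remove the jump-conditioning with \cref{lem:DM11Lemma2.10}, invoke \cref{prop:expbounds1}, and convert \smash{$\sum_{l\in I_0}l^{-1/2}m^{-1/2}(\log\sfrac Nm)^\alpha$} into $\xi(m)\xi(I_0)\psi^{k}(n_0,N)/N$, which is exactly the quantity \eqref{def:thin} controls. The gap is in the final step. Since
$\PP(\DD\Zz[m,j]=1\mid A_0,A_1)\ \ge\ \PP(\DD\Zz[m,j]=1,A_0\mid A_1)\ =\ \PP(\DD\Zz[m,j]=1\mid A_1)-\PP(\DD\Zz[m,j]=1,A_0^c\mid A_1),$
what must be shown is
$$\PP\big(\DD\Zz[m,j]=1,\ \exists\, l\in I_0:\DD\Zz[m,l]=1\,\big|\,A_1\big)\ \le\ \tfrac12\,\PP(\DD\Zz[m,j]=1\mid A_1).$$
Your ``key quantitative step'' only establishes $\PP(A_0^c\mid A_1)\le\tfrac12$, which yields $\PP(\DD\Zz[m,j]=1\mid A_0,A_1)\ge \PP(\DD\Zz[m,j]=1\mid A_1)-\tfrac12$ --- vacuous, since the jump probabilities are of order $(mj)^{-1/2}$ up to logarithms. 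The inference ``hence the relative loss is below a factor $2$'' is a non sequitur: on the bad event the process has an extra jump, so $\EE[f(\Zz[m,j])/j\mid A_0^c,A_1]$ is in general \emph{larger} than $\EE[f(\Zz[m,j])/j\mid A_1]$, and you never bound this ratio, even though your second paragraph correctly identifies it as the second factor to be controlled.

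The paper closes exactly this hole by factoring the joint probability the other way round,
$$\PP(\DD\Zz[m,j]=\DD\Zz[m,l]=1\mid A_1)=\PP(\DD\Zz[m,j]=1\mid A_1)\cdot\PP(\DD\Zz[m,l]=1\mid A_1,\DD\Zz[m,j]=1),$$
so that the target factor $\PP(\DD\Zz[m,j]=1\mid A_1)$ is pulled out exactly and only an \emph{upper} bound on the second factor is required. There the conditioning involves at most $k$ jumps ($\#I_1\le k-1$ plus the one at $j$ --- this is where the $k$ in $\psi^{k}$ and the hypothesis $\#I_1\le k-1$ come from; note your union bound uses only $k-1$ jumps, a sign the jump at $j$ has gone missing), whence \cref{lem:DM11Lemma2.10}, \eqref{psisize} and \eqref{eq:xitoroot} give $\PP(\DD\Zz[m,l]=1\mid A_1,\DD\Zz[m,j]=1)\le C(k)\,\psi^{k}(n_0,N)\xi(m)\xi(l)/N$, and \eqref{def:thin} finishes. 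To repair your version you would instead need a matching \emph{lower} bound $\PP(\DD\Zz[m,j]=1\mid A_1)\ge c(k)\sqrt{j/m}\,\big(1\vee\log\sfrac jm\big)^\alpha/j$, i.e.\ that conditioning on the jumps in $I_1$ does not stochastically decrease $\Zz[m,j]$; this positive-association statement is plausible but is not among the cited tools (\cref{lem:DM11Lemma2.9,lem:DM11Lemma2.10} give domination only in the other directions), so as written the argument does not close. A secondary point: the identity $\PP(\DD\Zz[m,j]=1\mid A_0,A_1)=\EE[f(\Zz[m,j])/j\mid A_0,A_1]$ is only immediate when all conditioning times lie before $j$; for times in $I_0\cup I_1$ after $j$ one should argue with joint probabilities, as the paper does.
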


\begin{proof}
Let $j \in\{m,\dots,N-1\} \setminus I_0.$ We have
\begin{equation}\label{eq:lemlbcond1}
\begin{aligned}
\PP(\DD\Zz[m,j]=1,& \DD\Zz[m,l]=0\;\forall l\in I_0|A_1) \\ & =\,\PP(\DD\Zz[m,j]=1|A_1)
-\PP(\DD\Zz[m,j]=1,\;\exists l\in I_0: \DD\Zz[m,l]=1|A_1)\\
& \geq \,\PP(\DD\Zz[m,j]=1|A_1)-\sum_{l\in I_0}\PP(\DD\Zz[n,j]=\DD\Zz[m,l]=1|A_1).
\end{aligned}
\end{equation}
The last sum can be rewritten
\begin{equation}\label{eq:lemlbcond2}
\begin{aligned}
\sum_{l\in I_0}\PP(\DD\Zz[m,j]=&\DD\Zz[m,l]=1|A_1)\\
&=\,\PP(\DD\Zz[m,j]=1|A_1)\sum_{l\in I_0}\PP(\DD\Zz[m,l]=1|A_1,\DD\Zz[m,j]=1 ).
\end{aligned}
\end{equation}
The conditioning event in the last sum involves at most $k$ jumps. We may apply \cref{lem:DM11Lemma2.10} to move them to the start of $\Zz[m,\cdot]$ and then the estimates \eqref{psisize} and \eqref{eq:xitoroot} to
obtain a constant $C(k)$ such that, for all $l\in I_0,$
\begin{equation}
\label{eq:lemlbcond3}
\begin{aligned}
\PP(\DD\Zz[m,l]= 1|A_1,\DD\Zz[m,j]=1) & \leq\,\PP^{k}(\DD\Zz[m,l]=1 )
=\,\frac{\psi^{k}(m,l)\xi(m,l)}{l}\\
&\leq\, C(k)\, \frac{\psi^{k}(n_0,N)\xi(m)\xi(l)}{N}.
\end{aligned}
\end{equation}
Inserting \eqref{eq:lemlbcond3} into~\eqref{eq:lemlbcond2} in combination with~\eqref{eq:lemlbcond1} yields
\begin{align*}\PP(\DD\Zz[m,j]=1&,\DD\Zz[m,l]=0\;\forall l\in I_0|A_1)\\ &\geq \PP(\DD\Zz[m,j]=1|A_1)\Big(1- C(k)\, \frac{\psi^{k}(n_0,N)\xi(m)\xi(I_0)}{N}\Big),\end{align*}
and using \eqref{def:thin} yields the statement.
\end{proof}

\begin{lem}[{Upper bound for conditional jump probabilities}]\label[lem]{lem:Mod212}
For every $k\in\N$ there exists $n_0\in\N$ and $C>0,$ such that for $n_0\leq m \leq N$
and   $I_0, I_1\subset\{m,\dots,N\}$ disjoint satisfying \eqref{def:thin} and  $\#I_1\leq k$, 
$$\PP(\DD\Zz[m,j]=1|A_1,A_0)\leq C\,
\PP(\DD\Zz[m,j]=1|A_1),\ \ \text{for all $j\in\{m,\dots,N-1\}$}.$$
\end{lem}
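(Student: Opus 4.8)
The plan is to mirror the structure of the proof of \cref{lem:lbCond}, but now bounding the relevant conditional probability from above rather than below. Fix $j\in\{m,\dots,N-1\}$. If $j\in I_1$, the probability equals one and there is nothing to prove, and if $j\in I_0$ it equals zero, so we may assume $j\notin I_0\cup I_1$. First I would write, by the definition of conditional probability,
\begin{equation*}
\PP(\DD\Zz[m,j]=1\mid A_1,A_0)=\frac{\PP(\DD\Zz[m,j]=1,A_0\mid A_1)}{\PP(A_0\mid A_1)},
\end{equation*}
so the task splits into producing an upper bound for the numerator and a lower bound for the denominator. The numerator is at most $\PP(\DD\Zz[m,j]=1\mid A_1)$, so it suffices to show that $\PP(A_0\mid A_1)$ is bounded below by an absolute constant under the hypotheses \eqref{def:thin} and $\#I_1\le k$.

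For the denominator, by inclusion–exclusion (or just a union bound on the complement),
\begin{equation*}
\PP(A_0\mid A_1)=\PP\big(\DD\Zz[m,l]=0\ \forall l\in I_0\mid A_1\big)\geq 1-\sum_{l\in I_0}\PP(\DD\Zz[m,l]=1\mid A_1).
\end{equation*}
Now I would estimate each term exactly as in \eqref{eq:lemlbcond3}: the conditioning event $A_1$ involves at most $k$ prescribed jumps, so by \cref{lem:DM11Lemma2.10} we may move those jumps to the start of the degree evolution, bounding $\PP(\DD\Zz[m,l]=1\mid A_1)\le \PP^{k}(\DD\Zz[m,l]=1)=\psi^k(m,l)\xi(m,l)/l$, and then the approximate factorisation $\xi(m,l)/l\approx\xi(m)\xi(l)/N$ from \eqref{eq:xitoroot} together with the monotonicity bound $\psi^k(m,l)\le C\,\psi^k(n_0,N)$ from \eqref{psisize} gives $\PP(\DD\Zz[m,l]=1\mid A_1)\le C(k)\,\psi^k(n_0,N)\xi(m)\xi(l)/N$. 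Summing over $l\in I_0$ and invoking the thinness condition \eqref{def:thin} — possibly after shrinking the constant $C(k)$ in \eqref{def:thin} so that the resulting sum is at most, say, $1/2$ — yields $\PP(A_0\mid A_1)\ge 1/2$. Combining the two bounds gives $\PP(\DD\Zz[m,j]=1\mid A_1,A_0)\le 2\,\PP(\DD\Zz[m,j]=1\mid A_1)$, which is the claim with $C=2$.

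The only genuinely delicate point is the bookkeeping of constants and of the index shift: here $\#I_1\le k$ rather than $\le k-1$ as in \cref{lem:lbCond}, so the conditioning event in the estimate of $\PP(\DD\Zz[m,l]=1\mid A_1)$ carries up to $k$ jumps and one must apply \cref{lem:DM11Lemma2.10} with this value, starting the dominating process in $\Zz[m,m]=\#I_1$; this is why the threshold in \eqref{def:thin} is stated with $\psi^k$ (rather than $\psi^{k-1}$) and why $n_0$ must be chosen large enough that \eqref{psisize} and the factorisation \eqref{eq:xitoroot} hold with the required uniformity on $\{n_0,\dots,N\}$. Beyond this, everything is a routine repetition of the argument already carried out for the lower bound, so I do not expect any substantive obstacle.
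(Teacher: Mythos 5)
Your argument is correct and reaches the same reduction as the paper — bound the numerator trivially by $\PP(\DD\Zz[m,j]=1\mid A_1)$ and bound $\PP(A_0\mid A_1)$ away from zero — but you handle the denominator differently. You use a union bound on the complement, $\PP(A_0\mid A_1)\geq 1-\sum_{l\in I_0}\PP(\DD\Zz[m,l]=1\mid A_1)$, and then estimate each summand exactly as in \eqref{eq:lemlbcond3} via \cref{lem:DM11Lemma2.10}, \eqref{psisize} and \eqref{eq:xitoroot}. The paper instead peels off the non-jump constraints one at a time using \cref{lem:DM11Lemma2.9} to obtain the product bound $\PP(A_0\mid A_1)\geq\prod_{j\in I_0}\bigl(1-\sfrac1j\EE^k f(\Zz[m,j])\bigr)$ and then controls $-\log\PP(A_0\mid A_1)$ by the same sum. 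The trade-off: your route dispenses with \cref{lem:DM11Lemma2.9} entirely, but it only closes if the sum of conditional jump probabilities over $I_0$ is strictly below $1$, which — as you correctly note — forces you to renormalise the constant in \eqref{def:thin} (and hence, strictly speaking, in \eqref{eq:colbd} and everywhere the lemma is invoked, which is harmless here since the applications have ample room). The product/logarithm argument is insensitive to this: it yields $\PP(A_0\mid A_1)\geq \e^{-C'}$ for \emph{any} uniform bound on $\xi(m)\psi^k(m,N)\xi(I_0)/N$, which is why the paper does not need to adjust constants. Both arguments are valid under the stated hypotheses; the index bookkeeping you flag ($\#I_1\le k$ versus $\le k-1$, and the choice of $n_0$) is handled correctly.
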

\begin{proof}
This is a modification of {\cite[Lemma 2.12]{dereich_random_2013}}. We have $$\PP(\DD\Zz[m,j]=1|A_0,A_1)\leq \frac{\PP(\DD\Zz[m,j]=1|A_1)}{\PP(A_0|A_1)},$$ so it suffices to bound 
 $\PP(A_0|A_1)$ uniformly from below. Since $\#I_1\leq k$, we get by \cref{lem:DM11Lemma2.10},
\begin{equation}\label{eq:Mod212lb}
\PP(\DD\Zz[m,j]=0\;\forall j\in I_0|\DD\Zz[m,j]=1\;\forall j\in I_1)\geq\PP^k(\DD\Zz[m,j]=0\;\forall j\in I_0).
\end{equation}
Denoting $i=\min{I_0}$, we obtain
\begin{align*}
\PP^k(\DD\Zz[m,j]=0\;\forall j\in I_0)=&\,\PP^k(\DD\Zz[m,j]=0\;\forall j\in I_0\setminus\{i\}|\DD\Zz[m,i]=0)\\
&\,\times\PP^k(\DD\Zz[m,i]=0)\\
\geq&\,\PP^k(\DD\Zz[m,j]=0\;\forall j\in I_0\setminus\{i\})\PP^k(\DD\Zz[m,i]=0),
\end{align*}
using Lemma~\ref{lem:DM11Lemma2.9}. Iteration yields
\begin{equation}\label{eq:Mod212lb2}
\begin{aligned}
\PP^k(\DD\Zz[m,j]=0\;\forall j\in I_0)&\geq\prod_{j\in I_0}\PP^k(\DD\Zz[m,j]=0)
=\prod_{j\in I_0}\big(1-\sfrac1j {\EE^k f(\Zz[m,j])}\big),
\end{aligned}
\end{equation}
and inserting \eqref{eq:Mod212lb2} into \eqref{eq:Mod212lb} yields
\begin{equation}
\label{eq:Mod212lb3}
\PP(\DD\Zz[m,j]=0\;\forall j\in I_0|\DD\Zz[m,j]=1\;\forall j\in I_1)\geq\prod_{j\in I_0}\big(1-\sfrac1j {\EE^k f(\Zz[m,j])}\big).
\end{equation}
Choose  $n_0$ large enough such that $n_0\leq m\leq j$ implies ${j}^{-1}\EE^k f(\Zz[m,j])<1$.
It is now possible to find~$c>1$ such that 
\smash{$-\log(1-j^{-1} {\EE^k f(\Zz[m,j])})\leq {c}j^{-1} {\EE^k f(\Zz[m,j])}$. }
Thus, taking the logarithm in \eqref{eq:Mod212lb3}, we bound, using \eqref{eq:xitoroot}, 
for some constant $C>0$,
\begin{align*}
-\log\PP(A_0|A_1)\leq & \, \sum_{j\in I_0} \sfrac{c}{j} \EE^k f(\Zz[m,j])=c \sum_{j\in I_0}\frac{\psi^k(m,j)\xi(m,j)}{j}\leq C\frac{\xi(m)\psi^k(m,N)\xi(I_0)}{N}, 
\end{align*}
and the last expression is uniformly bounded by \eqref{def:thin}.
\end{proof}

To choose $(\ell_k)_{k\geq 1}$ suitably, we need to understand how the choice of cutoff points influences the growth of the score. To this end let $ \mathcal{E}$ denote a configuration obtained after some stage of the exploration process, $V\subset\mathrm{veiled}( \mathcal{E})$ and consider the random variable $$S(V)=\xi(\{v\in V: v\leftrightarrow \mathrm{active}( \mathcal{E})\})=\xi(\{v\in V: \exists a\in \mathrm{active}( \mathcal{E}): v \leftrightarrow a\}).$$ The inclusion-exclusion principle yields the lower bound
\begin{equation}
\label{eq:Sincexcl}
S(V)\geq \sum_{v\in V}\xi(v)\sum_{a\in\mathrm{active}( \mathcal{E})}\1\{a\leftrightarrow v\} - \sum_{v\in V}\xi(v)\sum_{\heap{a<b}{a,b\in\mathrm{active}( \mathcal{E})}}\1\{a\leftrightarrow v \leftrightarrow b\}.
\end{equation}
To derive bounds on the probability that the term after the minus sign is positive, we define the events
\begin{align*}
A_1(V, \mathcal{E})&:=\{\exists v\in V; a<b; a,b\in\mathrm{active}( \mathcal{E}):\Delta\Zz[v,a-1]=\Delta\Zz[v,b-1]=1\},\\
A_2(V, \mathcal{E})&:=\{\exists v\in V; a<b; a,b\in\mathrm{active}( \mathcal{E}):\Delta\Zz[a,v-1]=\Delta\Zz[b,v-1]=1\},\\
A_3(V, \mathcal{E})&:=\{\exists v\in V; a<b; a,b\in\mathrm{active}( \mathcal{E}):\Delta\Zz[a,v-1]=\Delta\Zz[v,b-1]=1\}.\\
\end{align*}

Recalling that $\xi^2(A)=\sum_{v\in A}\xi(v)^2$ for $A\subset[N]$, we obtain the following bounds:
\begin{prop}[Collision probability]\label[prop]{prop:colbd}
Let $ \mathbf{e}$ be a proper configuration and $V\subset\mathrm{veiled}( \mathbf{e})$ such that, for some fixed $k\in\N$ and $n_0=n_0(k)$ as in \cref{lem:Mod212}, \begin{equation}\label{eq:colbd} \xi(\min V)\xi(\mathrm{active}( \mathbf{e})\cup \mathrm{dead}( \mathbf{e}))\leq C(k)\,  \frac{N}{2\psi^k(n_0,N)}.\end{equation} Then there is a constant $C>0$, depending only on $f$ and $k$, such that $$\PP\Big(\bigcup_{i=1}^3 A_i(V, \mathcal{E})\Big| \mathcal{E}= \mathbf{e}\Big)\leq C \big( 1 \vee \log\sfrac{N}{\min V \wedge \min (\mathrm{active}( \mathbf{e}))} \big)^{2\alpha+1}\frac{\xi(\mathrm{active}( \mathbf{e}))^2-\xi^2(\mathrm{active}( \mathbf{e}))}{N},$$
\end{prop}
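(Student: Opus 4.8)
The plan is to bound the probability of the union $\bigcup_{i=1}^3 A_i(V,\mathbf e)$ by a union bound over the three events, and to bound each $\PP(A_i(V,\mathbf e)\mid\mathcal E=\mathbf e)$ by a further union bound over all choices of the connector vertex $v\in V$ and the pair $a<b$ of active vertices. For a fixed such triple, the event in question asks that a single degree evolution jumps at two prescribed times (for $A_1$ and $A_2$) or that two degree evolutions each jump at one prescribed time, with one of these jumps being conditioned in a way compatible with $\mathbf e$ being the current configuration. The key point is that all the information encoded in $\mathbf e$ amounts to conditioning each relevant degree evolution on at most $k$ jumps (the active and dead vertices have bounded total score by~\eqref{eq:colbd}, so the thinness hypothesis~\eqref{def:thin} of \cref{lem:Mod212} is met), so \cref{lem:Mod212} lets us replace the conditional jump probabilities by unconditional ones at the cost of a constant $C=C(f,k)$.

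Concretely, for the term coming from $A_1$ I would write, for fixed $v<a<b$,
\[
\PP(\Delta\Zz[v,a-1]=\Delta\Zz[v,b-1]=1\mid \mathcal E=\mathbf e)\le C\,\PP^{k'}(\Delta\Zz[v,a-1]=1)\,\PP^{k'+1}(\Delta\Zz[v,b-1]=1)
\]
using \cref{lem:Mod212} together with \cref{lem:DM11Lemma2.10} to absorb the extra conditioned jump into a shift of the starting value; then \cref{prop:expbounds1} and the approximate factorisation $\xi(m,n)/n\approx\xi(m)\xi(n)/N$ give
\[
\PP^{k'}(\Delta\Zz[v,a-1]=1)=\frac{\EE^{k'}f(\Zz[v,a-1])}{a-1}\le C\,\big(1\vee\log\sfrac{N}{v}\big)^\alpha\,\frac{\xi(v)\xi(a)}{N},
\]
and similarly for the second factor. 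Summing the product over $a<b$ active gives a factor $\xi(\mathrm{active}(\mathbf e))^2-\xi^2(\mathrm{active}(\mathbf e))$ (the sum over unordered distinct pairs of $\xi(a)\xi(b)$), one power of $N^{-1}$ survives because there were two factors $N^{-1}$ but the sum $\sum_{v\in V}\xi(v)^2/N$ over connector vertices is $O(1)$ by the same score bound, and the two logarithmic factors combine into $(1\vee\log\sfrac{N}{\min V\wedge\min\mathrm{active}(\mathbf e)})^{2\alpha}$. The events $A_2$ and $A_3$ are handled the same way: for $A_2$ the two jumps $\Delta\Zz[a,v-1]=\Delta\Zz[b,v-1]=1$ live on two distinct degree evolutions (those of $a$ and of $b$), each contributing $\le C(1\vee\log\sfrac{N}{a})^\alpha\,\xi(a)\xi(v)/N$, and for $A_3$ one jump is of the evolution of $v$ and one of the evolution of $a$. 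In every case the sum over $v\in V$ and $a<b$ active produces exactly $\big(\xi(\mathrm{active}(\mathbf e))^2-\xi^2(\mathrm{active}(\mathbf e))\big)/N$ times a bounded sum $\sum_{v\in V}\xi(v)^2/N=O(1)$, and one keeps track of the worst logarithmic prefactor, which is $(1\vee\log\sfrac{N}{\min V\wedge\min\mathrm{active}(\mathbf e)})^{2\alpha}$ from the two $\alpha$-powers; an additional factor $(1\vee\log\sfrac{N}{\cdot})$ appears because in bounding $\sum_{v\in V}\xi(v)^2$ one uses $\xi(v)^2\le \xi(\min V)\xi(v)\cdot(\text{something})$ — more carefully, one of the three sums is over a single degree evolution visited at two times, where \cref{prop:expbounds1} applied to the \emph{second} moment $\EE f(\Zz)^2$ forces a $(1\vee\log)^{2\alpha}$ and an extra $(1\vee\log)$ comes from the harmonic-type sum $\sum_{a}1/a$ that is no longer killed by a $\xi$ factor. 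Collecting everything yields the stated bound with exponent $2\alpha+1$.

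The main obstacle I anticipate is the careful bookkeeping of which degree evolution carries which conditioning and how many conditioned jumps each one sees, so that \cref{lem:Mod212} and \cref{lem:DM11Lemma2.10} apply with a uniform constant; in particular one must check that the hypothesis~\eqref{def:thin}, i.e.\ $\xi(m)\xi(I_0)\le C(k)N/(2\psi^k(n_0,N))$, holds for each relevant evolution, which follows from~\eqref{eq:colbd} since $I_0$ is contained in the set of jump-times of active-or-dead vertices and $\min V$ bounds $\xi(m)^{-1}$ from the right direction. The secondary subtlety is extracting the exponent $2\alpha+1$ rather than $2\alpha$: this comes from the single-evolution contributions ($A_1$ and the symmetric half of $A_3$), where the relevant bound is on $\EE f(\Zz[v,\cdot])^2$ (giving $(1\vee\log)^{2\alpha}$ via the second-moment estimate of \cref{prop:expbounds1}) times a bare sum $\sum 1/j$ over an interval of logarithmic length, contributing the extra single power of $(1\vee\log)$. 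Once the logarithmic factor is pinned down as $\big(1\vee\log\sfrac{N}{\min V\wedge\min\mathrm{active}(\mathbf e)}\big)^{2\alpha+1}$, the remaining algebra — summing $\xi(a)\xi(b)$ over distinct active pairs and $\xi(v)^2/N$ over $v\in V$ — is routine and gives the claimed constant $C=C(f,k)$.
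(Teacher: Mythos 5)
Your overall strategy is the paper's: a union bound over triples $(v,a,b)$, removal of the conditioning on $\mathbf{e}$ via \cref{lem:Mod212} and \cref{lem:DM11Lemma2.10} (with \eqref{eq:colbd} guaranteeing \eqref{def:thin}), first-moment bounds from \cref{prop:expbounds1}, and the factorisation $\xi(m,n)/n\approx\xi(m)\xi(n)/N$. However, your final summation contains a concrete error. After bounding each triple's contribution by a constant times $\frac{\xi(a)\xi(b)}{vN}\big(1\vee\log\sfrac{N}{v\wedge a}\big)^{2\alpha}$, the remaining sum over connector vertices is $\sum_{v\in V}\sfrac1v\approx\sum_{v\in V}\xi(v)^2/N$, and your claim that this is $O(1)$ ``by the same score bound'' is false: \eqref{eq:colbd} controls $\xi(\min V)\,\xi(\mathrm{active}(\mathbf{e})\cup\mathrm{dead}(\mathbf{e}))$, not $\sum_{v\in V}\xi(v)^2$, and in the intended application $V$ is essentially all of $\{v_0,\dots,N\}$, so $\sum_{v\in V}\sfrac1v$ is of order $\log\sfrac{N}{\min V}$. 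This harmonic sum is precisely the source of the extra power of the logarithm; it is what turns the exponent $2\alpha$ into the stated $2\alpha+1$.

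Your subsequent attempt to recover the missing $+1$ by invoking the second-moment estimate $\EE f(\Zz)^2$ of \cref{prop:expbounds1} points to the wrong mechanism: the events $A_i$ prescribe jumps at two \emph{fixed} times, so only jump probabilities (i.e.\ first-moment bounds on $\EE f(\Zz[\cdot,\cdot])$) ever enter, and the second-moment bound plays no role in this proposition. Once you replace the erroneous $O(1)$ claim by $\sum_{v\in V}\sfrac1v\le C\big(1\vee\log\sfrac{N}{\min V}\big)$ and drop the second-moment digression, your argument coincides with the paper's proof. Your remark about tracking how many conditioned jumps each evolution sees is on point; the paper handles it by noting that $\mathbf{e}$ is proper, so an active vertex $a$ has exactly one explored incident edge and $\Zz[a,\cdot]$ is conditioned on at most one jump, which keeps the constants from \cref{lem:Mod212} and \cref{lem:DM11Lemma2.10} uniform.
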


\begin{proof}  Repeated use of the union bound yields $$\PP(A_1(V, \mathcal{E})| \mathcal{E}= \mathbf{e})\leq \sum_{v\in V}\sum_{\heap{a,b\in\mathrm{active}( \mathbf{e})}{a<b}}\PP(\Delta\Zz[v,a-1]=\Zz[v,b-1]=1| \mathcal{E}= \mathbf{e})$$
To drop the conditioning, we first use \cref{lem:Mod212} to remove all dependencies on non-existing connections given by $\mathbf{e}$ and then
\cref{lem:DM11Lemma2.10} to move the jump of $\Zz[v,\cdot]$  to the start. Note that we are allowed to do this as condition \eqref{eq:colbd} and the monotinicity of $\xi$ ensure that \eqref{def:thin} is satisfied, since certainly $\mathrm{active}( \mathbf{e})\cup \mathrm{dead}( \mathbf{e})$ contains the set of continuity points $I_0$ appearing in the conditioning of $\Zz[v,\cdot]$.
\begin{align*}
\PP( &\, \Delta\Zz[v,a-1]= \Delta\Zz[v,b-1]=1| \mathcal{E}= \mathbf{e})\\
&=\PP(\Delta\Zz[v,b-1]=1| \mathcal{E}= \mathbf{e},\Delta\Zz[v,a-1]=1)\, \PP(\Delta\Zz[v,a-1]=1| \mathcal{E}= \mathbf{e})\\
&\leq C_{\ref{lem:Mod212}}^2\PP^1(\Delta\Zz[v,b-1]=1)\PP(\Delta\Zz[v,a-1]=1)
\end{align*}
Using \cref{prop:expbounds1} yields a constant $C$, such that
\begin{align*}
\PP( &\, \Delta\Zz[v,a-1]\\
&\leq C_{\ref{lem:Mod212}}^2C \frac{(\log\frac{b}{v}\vee 1)^\alpha(\log\frac{a}{v}\vee 1)^\alpha}{v\sqrt{ab}}\\
&\leq B \frac{\xi(a)\xi(b)}{v N} (\log\sfrac{b}{v}\vee 1)^\alpha(\log\sfrac{a}{v}\vee 1)^\alpha,
\end{align*}
where the last inequality follows by using \eqref{eq:xitoroot} and combining all occurring constants into $B>0$. Hence, with $v_0=\min V$, we get
$$\PP(A_1(V, \mathcal{E})| \mathcal{E}= \mathbf{e})\leq \frac{B}{N}(\log\sfrac{N}{v_0}\vee 1)^{2\alpha}\sum_{v\in V}\frac 1v \sum_{\heap{a,b\in\mathrm{active}( \mathbf{e})}{a<b}}\xi(a)\xi(b).$$
For $A_2(V, \mathcal{E})$ we need to take into account that, for $a\in \mathrm{active}(\mathbf{e})$, 
$\Zz[a,\cdot]$ may only be conditioned to have at most one jump. This holds since $ \mathbf{e}$ is proper, i.e. the active and dead vertices of $ \mathbf{e}$ together with the explored edges form a tree implying that exactly one edge incident to $a$ has been explored. Using this fact to derive an upper bound on the number of jumps appearing in the conditioning of $\Zz[a,\cdot]$, a similar calculation as above yields $$\PP(A_2(V, \mathcal{E})| \mathcal{E}= \mathbf{e})\leq \frac{B'}{N}(\log\sfrac{N}{a_0}\vee 1)^{2\alpha}\sum_{v\in V}\frac 1v \sum_{\heap{a,b\in\mathrm{active}( \mathbf{e})}{a<b}}\xi(a)\xi(b),$$ for some $B'>0$ and $a_0=\min({\mathrm{active}( \mathbf{e})}).$
Analogously, we obtain
$$\PP(A_3(V, \mathcal{E})| \mathcal{E}= \mathbf{e})\leq \frac{B''}{N}(\log\sfrac{N}{a_0\wedge v_0}\vee 1)^{2\alpha}\sum_{v\in V}\frac 1v \sum_{\heap{a,b\in\mathrm{active}( \mathbf{e})}{a<b}}\xi(a)\xi(b),$$ for some $B''>0.$
Setting $B'''=\max(B,B',B'')$ these three estimates together with the union bound yield
$$\PP\Big(\bigcup_{i=1}^3 A_i(V, \mathcal{E})\Big| \mathcal{E}= \mathbf{e}\Big)\leq \frac{B'''}{N}(\log\sfrac{N}{a_0\wedge v_0}\vee 1)^{2\alpha}\big(\xi(\mathrm{active}( \mathbf{e}))^2-\xi^2(\mathrm{active}( \mathbf{e}))\big)\sum_{v\in V}\frac 1v,$$
which implies the claimed upper bound.
\end{proof}
\begin{rmk}
Note that we only use the that $\mathbf{e}$ is proper to make sure that an active vertex has at most one explored adjacent edge. Our proofs still work, if we drop the requirement that the explored subgraph is a tree and replace it with the requirement that its indegree is bounded in $N$.
\end{rmk}
\cref{prop:colbd} allows us to ignore the second sum of \eqref{eq:Sincexcl} outside a set of small probability. Decomposing the first sum of \eqref{eq:Sincexcl} according to the orientation of the occuring edges yields 
\begin{align*}\sum_{v\in V}\xi(v)\sum_{a\in\mathrm{active}( \mathcal{E})}\1\{a\leftrightarrow v\} &= \sum_{v\in V}\sum_{a\in\mathrm{active}( \mathcal{E})}\xi(v)\1\{v\leftarrow a\}+\sum_{a\in\mathrm{active}( \mathcal{E})}\sum_{v\in V}\xi(v)\1\{a\leftarrow v\}\\
&=:S^<(V)+S^>(V).\end{align*} Setting $$X_v:=\sum_{a\in\mathrm{active}( \mathcal{E})}\xi(v)\1\{v\leftarrow a\},\, v\in V \textrm{ and } Y_a:=\sum_{v\in V}\xi(v)\1\{a\leftarrow v\},\, a\in \mathrm{active}( \mathcal{E}),$$ we note that due to the independence of indegree evolutions $S^<(V)= \sum_{v\in V}X_v$ and $S^>(V)=\sum_{a\in\mathrm{active}( \mathcal{E})}Y_a$ are independent and both are sums of elements of the collection $\{X_v,Y_a:\,v\in V, a\in  a\in\mathrm{active}( \mathcal{E})\}$ of mutually independent random variables. In order to apply \cref{lem:CL27} we determine moment bounds for $X_v,v\in V$, $Y_a,a\in \mathrm{active}( \mathcal{E}).$
\begin{prop}[First and second moments of vertex scores]
Let $ \mathbf{e}$ be a proper configuration and $V\subset\mathrm{veiled}( \mathbf{e})$ such that \eqref{eq:colbd} is satisfied for some $k\in\NN$.
\begin{enumerate}[(i)]
\item There are constants $0<c,C<\infty$ depending only on $f$ and $k$, such that for all $v\in V$
\begin{equation}\label{eq:X1mom}
\EE[X_v| \mathcal{E}= \mathbf{e}]\;\geq\; c\frac{\xi(v)^2}{N}\sum_{\heap{a\in \mathrm{active}( \mathbf{e}):}{a>v}}\xi(a)(\log\sfrac av\vee 1)^\alpha
\end{equation}
and 
\begin{equation}
\begin{aligned} \label{eq:X2mom}
\EE[X^2_v| \mathcal{E}= \mathbf{e}]\; & \leq C  \xi(v)^2\Big( \!\!\!\sum_{\heap{a,b\in \mathrm{active}( \mathbf{e}):}{a,b>v, \; a\neq b}} \!\!\!\sfrac{(\log\frac av \vee 1)^\alpha(\log\frac bv \vee 1)^\alpha}{v\sqrt{ab}} + \!\!\!\sum_{\heap{a\in \mathrm{active}( \mathbf{e}):}{a>v}} \!\!\! \sfrac{(\log\frac av \vee 1)^\alpha}{\sqrt{va}} \Big).
\end{aligned}
\end{equation}
\item There are constants $0<c,C<\infty$ depending only on $f$ and $k$, such that for all $a\in \mathrm{active}( \mathbf{e})$
\begin{equation}\label{eq:Y1mom}
\EE[Y_a| \mathcal{E}= \mathbf{e}]\;\geq\; c  \frac{\xi(a)}{N}\sum_{\heap{v\in V:}{v>a}}\xi(v)^2 (\log\sfrac va \vee 1)^\alpha
\end{equation}
and 
\begin{equation}\label{eq:Y2mom}
\begin{aligned}
\EE[Y^2_a| \mathcal{E}= \mathbf{e}]\; \leq & \;C\Big(  \!\!\!\!\!\!\sum_{\heap{v, w\in V:}{v,w>a,\;v\neq w}} \!\!\! \!\!\!\xi(v)\xi(w)\sfrac{(\log\frac va \vee 1)^\alpha(\log\frac wa \vee 1)^\alpha}{a\sqrt{vw}} +  \!\sum_{\heap{v\in V:}{v>a}}\xi(v)^2\sfrac{(\log\frac va \vee 1)^\alpha}{\sqrt{va}} \Big).
\end{aligned}
\end{equation}
\end{enumerate}
\end{prop}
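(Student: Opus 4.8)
The plan is to compute each of the four conditional moments directly, as a single or double sum of conditional jump probabilities of the relevant independent degree evolutions, and to estimate these sums using the moment bounds of \cref{prop:expbounds1} and \cref{prop:expbounds2} together with the conditional jump bounds of \cref{lem:lbCond} and \cref{lem:Mod212}. Working under $\{\mathcal E=\mathbf e\}$, I would first record exactly how this conditioning constrains a degree evolution $\Zz[m,\cdot]$. Since $\mathbf e$ is proper, its explored edges form a tree: a veiled vertex $v$ has no explored incident edge, so $\{\mathcal E=\mathbf e\}$ forces $\Zz[v,\cdot]$ only to avoid jumps at a set $I_0\subset\{a-1:a\in\mathrm{active}(\mathbf e)\cup\mathrm{dead}(\mathbf e),\,a>v\}$ of times (the in-edges of $v$ already found absent), while an active vertex $a$ has exactly one explored incident edge, so $\Zz[a,\cdot]$ is forced to avoid a similar set $I_0$ and, if the parent of $a$ in the exploration tree is younger, to perform a single jump, giving $\#I_1\le1$. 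Because $\xi$ is non-increasing and $\xi(a-1)\approx\xi(a)$, hypothesis \eqref{eq:colbd} yields $\xi(\min V)\,\xi(I_0)\le C\,N/(2\psi^k(n_0,N))$ and hence the thinness condition \eqref{def:thin} for every application of \cref{lem:lbCond} and \cref{lem:Mod212} below; I would assume throughout that $k\ge2$, which is the only case used. Finally, combining the approximate factorisation $\xi(m,n)/n\approx\xi(m)\xi(n)/N$ from \eqref{eq:xitoroot}, the two-sided bound \eqref{psisize} on $\psi^j$, and \cref{prop:expbounds1}--\cref{prop:expbounds2}, I would note once and for all that for each fixed $j$ and all $m<n\le N$,
\[
\PP^j(\Delta\Zz[m,n-1]=1)=\frac{\EE^j f(\Zz[m,n-1])}{n-1}\;\approx\;\frac{\xi(m)\xi(n)}{N}\,\big(1\vee\log\sfrac nm\big)^\alpha,
\]
with implied constants depending only on $f$ and $j$, and with the upper, resp.\ lower, inequality furnished by \cref{prop:expbounds1}, resp.\ \cref{prop:expbounds2}.

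For the first moments I would use $\{v\leftarrow a\}=\{\Delta\Zz[v,a-1]=1\}$ to write $\EE[X_v\mid\mathbf e]=\xi(v)\sum_{a\in\mathrm{active}(\mathbf e),\,a>v}\PP(\Delta\Zz[v,a-1]=1\mid\mathbf e)$; since an active vertex has not yet probed the veiled vertex $v$ we have $a-1\notin I_0$ and $I_1=\emptyset$, so \cref{lem:lbCond} applies and gives $\PP(\Delta\Zz[v,a-1]=1\mid\mathbf e)\ge\tfrac12\PP(\Delta\Zz[v,a-1]=1)$, and the displayed lower bound then yields \eqref{eq:X1mom} after summation. The bound \eqref{eq:Y1mom} for $\EE[Y_a\mid\mathbf e]=\sum_{v\in V,\,v>a}\xi(v)\,\PP(\Delta\Zz[a,v-1]=1\mid\mathbf e)$ follows by the same scheme, noting that $v-1\notin I_0\cup I_1$ because the parent of $a$ is not a veiled vertex, and discarding the at most one forced jump in $I_1$ via the elementary observation that conditioning a degree evolution to perform an additional jump can only increase it stochastically, so that conditioning on $I_1$ does not decrease the jump probability.

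For the second moments I would expand $X_v^2=\xi(v)^2\big(\sum_a\1\{v\leftarrow a\}+\sum_{a\ne b}\1\{v\leftarrow a\}\1\{v\leftarrow b\}\big)$, the sums over active $a,b>v$. The diagonal term is treated as in the first moment but with \cref{lem:Mod212} passing from the conditioned to the unconditioned jump probability from above, which the display bounds by $C(\log\sfrac av\vee1)^\alpha/\sqrt{va}$, producing the second sum of \eqref{eq:X2mom}. For the off-diagonal term I would fix $a<b$, factor $\PP(\Delta\Zz[v,a-1]=\Delta\Zz[v,b-1]=1\mid\mathbf e)=\PP(\Delta\Zz[v,b-1]=1\mid\mathbf e,\Delta\Zz[v,a-1]=1)\,\PP(\Delta\Zz[v,a-1]=1\mid\mathbf e)$, apply \cref{lem:Mod212} to each factor (the conditioning involves $\#I_1\le1$ forced jump, admissible since $k\ge2$) to strip the dependence on $\mathbf e$, and then \cref{lem:DM11Lemma2.10} to move the remaining forced jump to the start of $\Zz[v,\cdot]$, bounding the product by $\PP^1(\Delta\Zz[v,b-1]=1)\,\PP(\Delta\Zz[v,a-1]=1)\le C(\log\sfrac bv\vee1)^\alpha(\log\sfrac av\vee1)^\alpha/(v\sqrt{ab})$; summing and symmetrising in $a,b$ gives the first sum of \eqref{eq:X2mom}. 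The computation of $\EE[Y_a^2\mid\mathbf e]$ is structurally identical after the analogous expansion; the only change is that $\Zz[a,\cdot]$ already carries up to one forced jump from the exploration tree, so the off-diagonal term requires stripping conditioning on $\#I_1\le2$ jumps (whence the hypothesis $k\ge2$) and then moving them to the start via \cref{lem:DM11Lemma2.10}, after which \cref{prop:expbounds1} applied to $\EE^1 f(\Zz[a,v-1])$ and $\EE^2 f(\Zz[a,w-1])$ gives \eqref{eq:Y2mom}.

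The main obstacle is not any single inequality --- each follows from the lemmas above once it is set up --- but the bookkeeping that makes the set-up legitimate: for every degree evolution entering a given collision or incidence event one must identify precisely which times it is conditioned to be a jump time and which a continuity time, check in each case that the resulting $I_0$ together with \eqref{eq:colbd} verifies \eqref{def:thin}, and keep the number of forced jumps on any one evolution at most $k$ so that \cref{lem:lbCond}, \cref{lem:Mod212} and \cref{lem:DM11Lemma2.10} apply, all while ensuring that the final constants depend only on $f$ and $k$, and not on $\mathbf e$, $V$ or $N$.
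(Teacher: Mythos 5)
Your proposal is correct and follows essentially the same route as the paper's proof: expand each moment as a (double) sum of conditional jump probabilities, use \cref{lem:lbCond} (plus the monotonicity from \cref{lem:DM11Lemma2.9}) for the first-moment lower bounds, and use \cref{lem:Mod212} followed by \cref{lem:DM11Lemma2.10} and \cref{prop:expbounds1} for the second-moment upper bounds, with the properness of $\mathbf e$ guaranteeing at most one forced jump on each active vertex's evolution. Your explicit bookkeeping of the sets $I_0,I_1$ and the verification of \eqref{def:thin} via \eqref{eq:colbd} is exactly what the paper does implicitly.
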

\begin{proof}
As $X_v$ is a constant multiple of a sum of indicators, its first conditional moment is 
\begin{align*}
\xi(v)\sum_{a\in \mathrm{active}( \mathbf{e})} & \PP(\Delta\Zz[v,a-1]=1| \mathcal{E}= \mathbf{e})\geq \frac12\xi(v) \!\!\!\sum_{a\in \mathrm{active}( \mathbf{e})} \!\!\!\PP(\Delta\Zz[v,a-1]=1)\\
&\geq \frac{c_{\ref{prop:expbounds2}}}{2}\xi(v) \!\!\!\sum_{\heap{a\in \mathrm{active}( \mathbf{e}):}{a>v}} \!\!\!\frac{(\log\frac av\vee 1)^\alpha}{\sqrt{va}}
\geq c\frac{\xi(v)^2}{N}\sum_{\heap{a\in \mathrm{active}( \mathbf{e}):}{a>v}}\xi(a)(\log\sfrac av\vee 1)^\alpha,
\end{align*}
where we have used \cref{lem:lbCond,lem:DM11Lemma2.9}, \cref{prop:expbounds2}, \eqref{eq:xitoroot} and chosen some appropiate constant $c>0$. A similar calculation for the second moment relies on \cref{lem:Mod212,lem:DM11Lemma2.9,lem:DM11Lemma2.10} and \cref{prop:expbounds1} 
and reads
\begin{align*}
&\xi(v)^2\sum_{a,b\in \mathrm{active}( \mathbf{e})}\PP(\Delta\Zz[v,a-1]=\Delta\Zz[v,b-1]=1| \mathcal{E}= \mathbf{e}) \\
& = \xi(v)^2\sum_{a,b\in \mathrm{active}( \mathcal{E})}\big(\PP(\Delta\Zz[v,a\vee b-1]=1| \mathcal{E}= \mathbf{e}, \Delta\Zz[v,a\wedge b-1]=1)\\
&\hspace{2cm}\times \PP(\Delta\Zz[v,a\wedge b-1]=1| \mathcal{E}= \mathbf{e})\big)\\
&\leq C^2_{\ref{lem:Mod212}}\xi(v)^2\sum_{a,b\in \mathrm{active}( \mathbf{e})}\PP^1(\Delta\Zz[v,a\vee b-1]=1)^{\1\{a\neq b\}}\PP(\Delta\Zz[v,a\wedge b-1]=1)\\
&\leq C^2_{\ref{lem:Mod212}}C_{\ref{prop:expbounds1}}^2\xi(v)^2\Big(\sum_{\heap{a,b\in \mathrm{active}( \mathbf{e}):}{a,b>v, \; a\neq b}}\frac{(\log\frac av \vee 1)^\alpha(\log\frac bv \vee 1)^\alpha}{v\sqrt{ab}} + \sum_{\heap{a\in \mathrm{active}( \mathbf{e}):}{a>v}}\frac{(\log\frac av \vee 1)^\alpha}{\sqrt{va}} \Big),
\end{align*}
This establishes~(i). Turning to (ii) we obtain firstly, for some appropriately chosen $c>0$,
\begin{align*}
\EE[Y_a| \mathcal{E}= \mathbf{e}]&=\sum_{v\in V}\xi(v)\PP(\Delta\Zz[a,v-1]=1| \mathcal{E}= \mathbf{e})\\
&\geq \sfrac{1}{2}\sum_{v\in V}\xi(v)\PP(\Delta\Zz[a,v-1]=1)
\geq c \, \xi(a)\sum_{v\in V:v>a}\frac 1v (\log\sfrac va \vee 1)^\alpha,
\end{align*}
where we have used \cref{lem:lbCond,lem:DM11Lemma2.9} for the first inequality and \cref{prop:expbounds2} and \eqref{eq:xitoroot} for the second. Secondly, analogous to the second moment calculation for~(i) we get
\begin{align*}
& \EE[Y^2_a|  \mathcal{E}= \mathbf{e}] \\
& = \sum_{v,w \in V}\xi(v)\xi(w)\PP(\Delta\Zz[a,v-1]=\Delta\Zz[a,w-1]=1| \mathcal{E}= \mathbf{e}) \\
& = \sum_{v,w\in V}\xi(v)\xi(w)\big(\PP(\Delta\Zz[a,v\vee w-1]=1| \mathcal{E}= \mathbf{e}, \Delta\Zz[a,v\wedge w-1]=1)\\
&\hspace{3cm}\times \PP(\Delta\Zz[a,v\wedge w-1]=1| \mathcal{E}= \mathbf{e})\big)\\
&\leq C^2_{\ref{lem:Mod212}}\sum_{v,w\in V}\xi(v)\xi(w)\PP^2(\Delta\Zz[a,v\vee w-1]=1)^{\1{\{v\neq w\}}}\PP^1(\Delta\Zz[a,v\wedge w-1]=1)\\
&\leq C^2_{\ref{lem:Mod212}}C_{\ref{prop:expbounds1}}^2\Big(\!\!\!\!\!\!\sum_{\heap{v, w\in V:}{v,w>a,\;v\neq w}}\xi(v)\xi(w)\sfrac{(\log\frac va \vee 1)^\alpha(\log\frac wa \vee 1)^\alpha}{a\sqrt{vw}} + \sum_{\heap{v\in V}{v>a}}\xi(v)^2\sfrac{(\log\frac va \vee 1)^\alpha}{\sqrt{va}} \Big),
\end{align*}
and the claim follows.
\end{proof}
The lower bounds \eqref{eq:X1mom} and \eqref{eq:Y1mom} now imply, that for $ \mathbf{e},V$ chosen as before
\begin{align}
&\EE[S^<(V)+S^>(V)| \mathcal{E} =  \mathbf{e}]\notag\\
& \geq \sfrac{c_{\eqref{eq:X1mom}}\wedge c_{\eqref{eq:Y1mom}}}{N} \!\!\!\!\! \sum_{\heap{a\in\mathrm{active}( \mathbf{e})}{v\in V}}\!\!\!\!\!\1\{v<a\}\xi(v)^2\xi(a)\big(\log\sfrac av \vee 1\big)^\alpha+\1\{v>a\}\xi(v)^2\xi(a)\big(\log\sfrac va \vee 1\big)^\alpha\notag\\
& \geq c \sum_{a\in\mathrm{active}( \mathbf{e})}\xi(a)\sum_{v\in V}\frac1v \big(\log\sfrac{a\vee v}{a\wedge v} \vee 1\big)^\alpha,\label{eq:FMLB}\end{align}
for some small $c>0.$ The factor $\sum_{v\in V}v^{-1} \big(\log(({a\vee v})/({a\wedge v})) \vee 1\big)^\alpha$ in the last sum is large as long as the set $V$ is sufficiently dense in $[N]$. In fact, the following instance of the pigeonhole principle applies, which is proved as \cref{lem:tedious2} in \cref{knownresults}.
\begin{lem}\label[lem]{lem:tediousmain}
There are $\eta\in(0,1)$ and $c>0$ only depending on $\alpha$ such that for any choice of $A\subset \{\lceil 2\e^2\rceil ,\dots, N \}$ and $v_0<\frac{\min A}{\e^2}\wedge\eta N$ satisfying
\begin{equation}\label{eq:conccond1}
\big(\log\sfrac{N}{\min A}\vee 1\big)^\alpha \xi^2(A)\leq \frac c2 N \big(\log\sfrac{N}{v_0}\big)^{\alpha+1},
\end{equation}
we have, for $V=\{v_0,\dots,N\}\setminus A$ and any $a\in A $,
\begin{equation}\label{eq:pigconclmain}
\sum_{v\in V}\frac1v \big(\log\sfrac{a\vee v}{a\wedge v}\vee 1\big)^{\alpha}\geq \frac c2 \big(\log\sfrac{N}{v_0}\big)^{\alpha+1},
\end{equation}
if $N$ is suffciently large.
\end{lem}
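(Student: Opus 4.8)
The plan is to write $\{v_0,\dots,N\}$ as the disjoint union of $V$ and $A$ — legitimate because the hypothesis $v_0<\min A/\e^2$ forces $\min A>v_0$, so that $A\subset\{v_0,\dots,N\}$ — and then to bound the sum over $V$ from below by subtracting an upper bound for the sum over $A$ from a lower bound for the full sum $\Sigma:=\sum_{v=v_0}^{N}\tfrac1v(\log\sfrac{a\vee v}{a\wedge v}\vee 1)^\alpha$. The conclusion \eqref{eq:pigconclmain} will then follow by choosing the constant $c$ small enough relative to the constant produced in the lower bound for $\Sigma$, observing that the hypothesis \eqref{eq:conccond1} already carries the same constant $c/2$ that the statement demands on the right of \eqref{eq:pigconclmain}.

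For the lower bound on $\Sigma$ I would first drop the $\vee 1$ inside the logarithm, which only decreases the summand since $(x\vee 1)^\alpha\ge x^\alpha$ for $x\ge 0$ and $\alpha>0$. I would then isolate the two ranges on which $v\mapsto\tfrac1v(\log\sfrac{a\vee v}{a\wedge v})^\alpha$ is monotone decreasing, namely $v<a/\e$ (a product of two decreasing positive functions) and $v>a\e^\alpha$ (beyond the unique maximum of $\tfrac1v(\log\sfrac va)^\alpha$ on $v>a$, located at $v=a\e^\alpha$). On each of these the sum dominates the corresponding integral up to an $\alpha$-dependent rounding error, and the substitutions $u=\log\sfrac av$, $u=\log\sfrac va$ evaluate the two integrals to $\tfrac1{\alpha+1}((\log\sfrac a{v_0})^{\alpha+1}-1)$ and $\tfrac1{\alpha+1}((\log\sfrac Na)^{\alpha+1}-\alpha^{\alpha+1})$ respectively (the first being legitimate because $v_0<a/\e^2$ makes $\log\sfrac a{v_0}\ge 2$, and the second being trivially valid when $a>N\e^{-\alpha}$ renders it vacuous). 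Summing and invoking the elementary convexity inequality $x^{\alpha+1}+y^{\alpha+1}\ge 2^{-\alpha}(x+y)^{\alpha+1}$ with $x=\log\sfrac a{v_0}$, $y=\log\sfrac Na$, $x+y=\log\sfrac N{v_0}$, gives $\Sigma\ge\tfrac{2^{-\alpha}}{\alpha+1}(\log\sfrac N{v_0})^{\alpha+1}-C_\alpha$ for an absolute constant $C_\alpha$. Since $\log\sfrac N{v_0}>\log\sfrac1\eta$ is bounded below, I would then choose $\eta=\eta(\alpha)$ small enough that $C_\alpha$ is absorbed into half the main term, leaving $\Sigma\ge c_\alpha(\log\sfrac N{v_0})^{\alpha+1}$ with, say, $c_\alpha=\tfrac{2^{-\alpha}}{2(\alpha+1)}$.

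For the upper bound on $\Sigma_A:=\sum_{v\in A}\tfrac1v(\log\sfrac{a\vee v}{a\wedge v}\vee 1)^\alpha$ I would use \eqref{eq:xitoroot} in the form $\tfrac1v\le\xi(v)^2/N$ together with the crude estimate $\log\sfrac{a\vee v}{a\wedge v}\le\log\sfrac N{\min A}$, valid for all $v,a\in A\subset\{\lceil 2\e^2\rceil,\dots,N\}$, to obtain $\Sigma_A\le\tfrac1N(\log\sfrac N{\min A}\vee 1)^\alpha\,\xi^2(A)$, which by hypothesis \eqref{eq:conccond1} is at most $\tfrac c2(\log\sfrac N{v_0})^{\alpha+1}$. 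Choosing $c\le c_\alpha$ then yields $\sum_{v\in V}(\cdots)=\Sigma-\Sigma_A\ge(c_\alpha-\tfrac c2)(\log\sfrac N{v_0})^{\alpha+1}\ge\tfrac c2(\log\sfrac N{v_0})^{\alpha+1}$, uniformly over all admissible $a$, which is \eqref{eq:pigconclmain}.

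The genuinely routine ingredient here is the sum-versus-integral comparison, including locating the maximum of $\tfrac1v(\log\sfrac va)^\alpha$ and the floor/ceiling corrections, and this is where the assumption that $N$ is large is spent. I expect the \emph{main obstacle} to be purely bookkeeping: one must fix the constants $\eta$ and $c$ — each depending only on $\alpha$ — so that the additive errors from the integral comparison, the factor $2^{-\alpha}$ from convexity, and the constant $c/2$ that appears on both sides of the statement are mutually consistent, and this uniformly over every admissible choice of $a\in[\e^2 v_0,N]$ and every set $A$ meeting \eqref{eq:conccond1}.
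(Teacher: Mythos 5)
Your proposal is correct and follows essentially the same route as the paper's own proof (\cref{lem:tedious2}): write the sum over $V$ as the full sum over $\{v_0,\dots,N\}$ minus the sum over $A$, bound the latter via $\frac1v\le\xi(v)^2/N$ and hypothesis \eqref{eq:conccond1}, and bound the former from below by integral comparison on the two monotone ranges followed by the convexity inequality $x^{\alpha+1}+y^{\alpha+1}\ge 2^{-\alpha}(x+y)^{\alpha+1}$. The only real difference is in the bookkeeping: the paper splits into the cases $a\le\varepsilon_0N$ and $a\ge\varepsilon_0N$ with an explicitly tuned $\eta$, whereas you absorb all the additive constants at once into half of the main term using $\log\sfrac{N}{v_0}>\log\sfrac1\eta$, which works equally well and avoids the case distinction.
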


We summarise our observations in the following lemma. 

\begin{lem}[{Concentration of score}]\label[lem]{lem:concscore}
Let $\mathbf{e}$ be a proper configuration with $a_0=\min(\mathrm{active}( \mathbf{e}))$ and let $v_0<({a_0}/{\e^2})\wedge \eta_{\ref{lem:tediousmain}}N$ such that $V=\{v_0,\dots,N\}\cap \mathrm{veiled}(\mathbf{e})$ satisfies both \eqref{eq:colbd} for $k=2$ and \eqref{eq:conccond1} for $A=\mathrm{active}( \mathbf{e}).$
Then there exists a constant $c>0$ such that, for all $\beta\in(0,1)$,
\begin{align*}
\PP\big(S(V)\leq & \;(1-\beta)  \sfrac{c_{\eqref{eq:FMLB}}c_{\ref{lem:tediousmain}}}{2}\big(\log\sfrac N{v_0}\big)^{\alpha+1}\xi(\mathrm{active}( \mathbf{e})) \big| \mathcal{E}= \mathbf{e}\big)\\
\leq & \,\exp\Big( -\beta^2c\min\Big\{\sfrac{\xi(\mathrm{active}( \mathbf{e}))^2}{\xi^2(\mathrm{active}( \mathbf{e}))},\;\sfrac{\xi(\mathrm{active}( \mathbf{e}))(\log(N/{v_0}))^{\alpha+2}}{\xi(v_0)},\; v_0\big(\log \sfrac{N}{v_0}\big)^2\Big\} \Big)\\
& + \,C_{\ref{prop:colbd}} \big( \log\sfrac{N}{v_0}\vee 1\big)^{2\alpha+1}\frac{\xi(\mathrm{active}( \mathbf{e}))^2-\xi^2(\mathrm{active}( \mathbf{e}))}{N}.
\end{align*}
\end{lem}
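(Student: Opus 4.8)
The proof combines the two deterministic inequalities already assembled with a single application of the concentration inequality from \cref{lem:CL27}, applied twice, to the independent sums $S^<(V)$ and $S^>(V)$. First I would record the lower bound on the conditional mean: by \eqref{eq:FMLB} together with \cref{lem:tediousmain} applied to $A=\mathrm{active}(\mathbf{e})$ (whose hypothesis \eqref{eq:conccond1} is exactly the assumed bound), we have
\[
\EE[S^<(V)+S^>(V)\mid\mathcal{E}=\mathbf{e}]\;\geq\; \tfrac{c_{\eqref{eq:FMLB}}c_{\ref{lem:tediousmain}}}{2}\big(\log\sfrac N{v_0}\big)^{\alpha+1}\xi(\mathrm{active}(\mathbf{e}))\;=:\;\mu.
\]
Since $S^<(V)=\sum_{v\in V}X_v$ and $S^>(V)=\sum_{a\in\mathrm{active}(\mathbf{e})}Y_a$ are each (conditionally on $\mathcal{E}=\mathbf{e}$) sums of independent nonnegative variables drawn from the joint collection $\{X_v,Y_a\}$, \cref{lem:CL27} applies to the combined sum $S^<(V)+S^>(V)$ with $\lambda=\beta\mu$, giving
\[
\PP\big(S^<(V)+S^>(V)\leq(1-\beta)\mu\,\big|\,\mathcal{E}=\mathbf{e}\big)\;\leq\;\exp\Big(-\frac{\beta^2\mu^2}{2\big(\sum_{v\in V}\EE[X_v^2\mid\mathbf{e}]+\sum_{a}\EE[Y_a^2\mid\mathbf{e}]\big)}\Big).
\]

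The heart of the argument is then to show that the exponent on the right dominates (up to a constant) the minimum of the three quantities appearing in the statement. This is where I would spend the effort: summing the second-moment bounds \eqref{eq:X2mom} and \eqref{eq:Y2mom} over $v\in V$ and $a\in\mathrm{active}(\mathbf{e})$. Each of these bounds splits into a ``diagonal'' and an ``off-diagonal'' piece. For the off-diagonal pieces one recognises, after using the approximate factorisation $\xi(m)\xi(n)/N\approx\xi(m,n)/n$ from \eqref{eq:xitoroot} and bounding $\log(a/v)\vee 1\leq\log(N/v_0)\vee 1$, that they reproduce a term comparable to $\mu^2/\big(v_0(\log(N/v_0))^{2}\big)$ times a harmonic-type sum; more precisely the off-diagonal contribution is of order $\xi(\mathrm{active}(\mathbf{e}))^2\,(\log\sfrac N{v_0})^{2\alpha}\sum_{v\in V}v^{-1}$, and comparing with $\mu^2$ yields the term $v_0(\log\sfrac N{v_0})^2$ inside the minimum once $\sum_{v\in V}v^{-1}\lesssim\log(N/v_0)$ is used. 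The diagonal piece of \eqref{eq:X2mom}, summed over $v$, is bounded (again via \eqref{eq:xitoroot}) by a constant times $\big(\log\sfrac N{v_0}\big)^{\alpha}\,\xi(\mathrm{active}(\mathbf{e}))\,\sum_{v\in V}\xi(v)/\sqrt{\cdot}\lesssim\xi(\mathrm{active}(\mathbf{e}))^2(\log\sfrac N{v_0})^{2\alpha+1}/v_0\cdot\xi(v_0)$-type expression; matching against $\mu^2$ produces the second term $\xi(\mathrm{active}(\mathbf{e}))(\log(N/v_0))^{\alpha+2}/\xi(v_0)$. Finally the diagonal piece of \eqref{eq:Y2mom} summed over $a$ contributes a term comparable to $\xi^2(\mathrm{active}(\mathbf{e}))\,(\log\sfrac N{v_0})^{\alpha}\cdot(\text{something})$, and dividing $\mu^2$ by it yields the first term $\xi(\mathrm{active}(\mathbf{e}))^2/\xi^2(\mathrm{active}(\mathbf{e}))$ in the minimum. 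Since $1/\max\{A,B,C\}\geq\tfrac13\min\{1/A,1/B,1/C\}$, the exponent is bounded below by $\beta^2 c\min\{\cdots\}$ for a suitable $c>0$, which is the first term on the right of the asserted bound.

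It remains to pass from $S^<(V)+S^>(V)$ to the actual score $S(V)$. Here I invoke the inclusion--exclusion inequality \eqref{eq:Sincexcl}: on the complement of the event $\bigcup_{i=1}^3 A_i(V,\mathcal{E})$, the subtracted double sum in \eqref{eq:Sincexcl} vanishes (a pair $a\leftrightarrow v\leftrightarrow b$ would force one of the configurations recorded by $A_1,A_2,A_3$), hence $S(V)\geq S^<(V)+S^>(V)$ there. Therefore
\[
\PP\big(S(V)\leq(1-\beta)\mu\,\big|\,\mathcal{E}=\mathbf{e}\big)\;\leq\;\PP\big(S^<(V)+S^>(V)\leq(1-\beta)\mu\,\big|\,\mathbf{e}\big)+\PP\Big(\bigcup_{i=1}^3 A_i(V,\mathcal{E})\,\Big|\,\mathbf{e}\Big),
\]
and the second summand is controlled by \cref{prop:colbd}, whose hypothesis \eqref{eq:colbd} (for $k=2$) is among the standing assumptions of the lemma and which gives exactly the last term $C_{\ref{prop:colbd}}(\log\sfrac N{v_0}\vee 1)^{2\alpha+1}(\xi(\mathrm{active}(\mathbf{e}))^2-\xi^2(\mathrm{active}(\mathbf{e})))/N$ (using $a_0\wedge v_0=v_0$ since $v_0<a_0/\e^2$). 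Combining the two displays completes the proof.

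The main obstacle is clearly the bookkeeping in the middle paragraph: one must carefully sum \eqref{eq:X2mom} and \eqref{eq:Y2mom}, repeatedly replace $\xi(m)\xi(n)/N$ by $\xi(m,n)/n$ via \eqref{eq:xitoroot}, handle the $(\log)^\alpha$ factors by the crude bound $\log(a/v)\vee 1\le\log(N/v_0)\vee 1$, and identify precisely which of the three terms in the stated minimum each of the four resulting sums (two diagonal, two off-diagonal) is responsible for; the constants must be tracked only up to their dependence on $f$ and $k=2$. Everything else is either a direct citation (\cref{lem:CL27}, \cref{prop:colbd}, \cref{lem:tediousmain}) or the elementary inclusion--exclusion step.
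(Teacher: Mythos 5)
Your approach is essentially identical to the paper's: decompose $S(V)$ via the inclusion--exclusion bound \eqref{eq:Sincexcl}, control the collision term by \cref{prop:colbd}, and apply \cref{lem:CL27} to $S^<(V)+S^>(V)$ with the conditional mean bounded below by \eqref{eq:FMLB} and \cref{lem:tediousmain} and the second moments bounded above via \eqref{eq:X2mom}, \eqref{eq:Y2mom}. That is exactly what the paper does. However, the bookkeeping in your middle paragraph -- which you rightly identify as the heart of the argument -- is partly misattributed. Carrying out the four sums carefully (as the paper does with the quantities $\rho_1,\rho_2,\rho_3,\rho_4$ in the proof) one finds: the off-diagonal piece of \eqref{eq:X2mom} yields, after the substitution $\sum_{v\in V}\xi(v)^2/v=O(N/v_0)$, a contribution of order $(\log\frac N{v_0})^{2\alpha}\xi(\mathrm{active}(\mathbf e))^2/v_0$, producing the \emph{third} term $v_0(\log\frac N{v_0})^2$; the diagonal pieces of both \eqref{eq:X2mom} and \eqref{eq:Y2mom} each give contributions of order $(\log\frac N{v_0})^{\alpha}\xi(v_0)\xi(\mathrm{active}(\mathbf e))$, producing the \emph{second} term; and the \emph{off-diagonal} piece of \eqref{eq:Y2mom} (not the diagonal one, as you state) gives order $(\log\frac N{v_0})^{2\alpha+2}\xi^2(\mathrm{active}(\mathbf e))$, producing the \emph{first} term $\xi(\mathrm{active}(\mathbf e))^2/\xi^2(\mathrm{active}(\mathbf e))$. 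Also, your intermediate expression ``$\xi(\mathrm{active}(\mathbf{e}))^2(\log\frac N{v_0})^{2\alpha}\sum_{v\in V}v^{-1}$'' for the off-diagonal contribution is off: the relevant sum is $\sum_{v\in V}\xi(v)^2/v\approx N/v_0$, not $\sum_{v\in V}v^{-1}$; if you used the latter you would obtain $\log(N/v_0)$ rather than $v_0(\log(N/v_0))^2$ after dividing $\mu^2$ by it. These are local slips in a sketch and would surface immediately on carrying out the computation, but they should be corrected before the proof can be considered complete.
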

\begin{proof}
 On the complement of the event $\mathcal{A}:=A_1(V, \mathcal{E})\cup A_2(V, \mathcal{E})\cup A_3(V, \mathcal{E})$ we choose $$\lambda=\beta\sfrac{c_{\eqref{eq:FMLB}}c_{\ref{lem:tediousmain}}}{2}\big(\log\sfrac N{v_0}\big)^{\alpha+1}\xi(\mathrm{active}(\mathbf{e})),$$ set $d={c_{\eqref{eq:FMLB}}c_{\ref{lem:tediousmain}}}/{2},$ and note that by \eqref{eq:FMLB},
 \begin{align*}
 \PP\big(\{S(V)\leq (1-\beta) & d\big(\log\sfrac N{v_0}\big)^{\alpha+1}\xi(\mathrm{active}( \mathbf{e}))\}\cap\mathcal{A} \big| \mathcal{E}= \mathbf{e}\big)\\
 \leq\; & \PP\big(\{S(V)\leq \EE[S(V)| \mathcal{E}= \mathbf{e}] -\beta d\big(\log\sfrac N{v_0}\big)^{\alpha+1}\xi(\mathrm{active}( \mathbf{e}))\}\cap\mathcal{A} \big| \mathcal{E}= \mathbf{e}\big).
 \end{align*}
  Applying \cref{lem:CL27}, we obtain
 \begin{equation}
 \label{eq:4help1}
 \begin{aligned}
 \PP\big(\{S(V)\leq (1-\beta) & d\big(\log\sfrac N{v_0}\big)^{\alpha+1}\xi(\mathrm{active}( \mathbf{e}))\}\cap\mathcal{A} \big| \mathcal{E}= \mathbf{e}\big)\\
 \leq & \exp\Big(-\frac{\beta^2d^2}{2}\frac{(\log\sfrac{N}{v_0})^{2\alpha+2}\xi(\mathrm{active}( \mathbf{e}))^2}{\sum_{v\in V}\EE[X_v|\mathcal{E}= \mathbf{e}]+\sum_{a\in \mathrm{active}( \mathbf{e})}\EE[Y_a|\mathcal{E}= \mathbf{e}]}\Big)
 \end{aligned}
 \end{equation}
and by \eqref{eq:X2mom} and \eqref{eq:Y2mom}, the sum in the denominator can be bounded
\begin{equation}
\label{eq:4help2}
\begin{aligned}\sum_{v\in V}&\EE[X_v|\mathcal{E}= \mathbf{e}]+\sum_{a\in \mathrm{active}( \mathbf{e})}\EE[Y_a|\mathcal{E}= \mathbf{e}]\\
 \leq\, & \, C_{\eqref{eq:X2mom}}  \sum_{v\in V}\xi(v)^2\Big( \!\!\!\sum_{\heap{a,b\in \mathrm{active}( \mathbf{e}):}{a,b>v, \; a\neq b}} \!\!\!\sfrac{(\log\frac av \vee 1)^\alpha(\log\frac bv \vee 1)^\alpha}{v\sqrt{ab}} + \!\!\!\sum_{\heap{a\in \mathrm{active}( \mathbf{e}):}{a>v}} \!\!\! \sfrac{(\log\frac av \vee 1)^\alpha}{\sqrt{va}} \Big)\\
 & + C_{\eqref{eq:Y2mom}}\sum_{a\in \mathrm{active}( \mathbf{e})}\Big(  \!\!\!\!\!\!\sum_{\heap{v, w\in V:}{v,w>a,\;v\neq w}} \!\!\! \!\!\!\xi(v)\xi(w)\sfrac{(\log\frac va \vee 1)^\alpha(\log\frac wa \vee 1)^\alpha}{a\sqrt{vw}} +  \!\sum_{\heap{v\in V:}{v>a}}\xi(v)^2\sfrac{(\log\frac va \vee 1)^\alpha}{\sqrt{va}} \Big).
\end{aligned}
\end{equation} 
 We now calculate bounds for all the terms appearing on the right hand side of \eqref{eq:4help2}. Observe that, for some appropiately chosen constant $D_1>0$, by \eqref{eq:xitoroot}
 $$\rho_1(v):=\sum_{\heap{a,b\in \mathrm{active}( \mathbf{e}):}{a,b>v, \; a\neq b}} \!\!\!\sfrac{(\log\frac av \vee 1)^\alpha(\log\frac bv \vee 1)^\alpha}{v\sqrt{ab}}\leq \big(\log\sfrac{N}{v_0}\big)^{2\alpha}\frac{D_1}v \sum_{\heap{a,b\in \mathrm{active}( \mathbf{e}):}{a,b>v, \; a\neq b}}\frac{\xi(a)\xi(b)}{N}$$
  and similarly, for some $D_2>0$,
  $$\rho_2(v):=\!\!\!\sum_{\heap{a\in \mathrm{active}( \mathbf{e}):}{a>v}} \!\!\! \sfrac{(\log\frac av \vee 1)^\alpha}{\sqrt{va}} \leq D_2 \big(\log\sfrac{N}{v_0}\big)^\alpha \frac{ \xi(v)\xi(\mathrm{active}( \mathbf{e}))}{N}.$$
  Combining the two estimates just obtained repeated use of \eqref{eq:xitoroot} yields \begin{equation}\label{eq:p16_1}\begin{aligned}
  \sum_{v\in V}\xi(v)^2\rho_1(v) +& \sum_{v\in V}\xi(v)^2\rho_2(v) \\ \leq & \; D \max\Big\{\frac1{v_0}\big(\log\sfrac{N}{v_0}\big)^{2\alpha}\xi(\mathrm{active}( \mathbf{e}))^2 ,\big(\log\sfrac{N}{v_0}\big)^{\alpha}\xi(v_0)\xi(\mathrm{active}( \mathbf{e}))\Big\}
 \end{aligned}
 \end{equation}
 for some $D>0,$ using $\sum_{v\in V}\xi(v)^2/v=O(N\sum_{v=v_0}^N v^{-2})=O(N/v_0)$ for the first sum, $\sum_{v\in V}\xi(v)^3=O(N^{3/2}\sum_{v=v_0}^N v^{-3/2})=O(N^{3/2}/{v_0}^{1/2})$ for the second sum, and finally $x+y\leq 2 (x\vee y)$. Next, we obtain in a similar way, for some $D_3>0$, $$\rho_3(a):=\sum_{\heap{v, w\in V:}{v,w>a,\;v\neq w}} \!\!\! \!\!\!\xi(v)\xi(w)\sfrac{(\log\frac va \vee 1)^\alpha(\log\frac wa \vee 1)^\alpha}{a\sqrt{vw}}\leq \; D_3\big(\log\sfrac{N}{v_0}\big)^{2\alpha}\frac{N}{a}\sum_{\heap{v, w\in V:}{v,w>a,\;v\neq w}}\frac{1}{vw} $$ and for some $D_4>0$ $$\rho_4(a):=\!\sum_{\heap{v\in V:}{v>a}}\xi(v)^2\sfrac{(\log\frac va \vee 1)^\alpha}{\sqrt{va}}\leq\;D_4 \big(\log\sfrac{N}{v_0}\big)^{\alpha} \frac{N}{\sqrt{a}}\sum_{\heap{v\in V:}{v>a}}v^{-\frac32}.$$ Consequently, mirroring the derivation of \eqref{eq:p16_1}, we obtain
 \begin{equation}\label{eq:p16_2}\begin{aligned}
 \sum_{a\in \mathrm{active}( \mathbf{e})}\rho_3(a)+ & \sum_{a\in \mathrm{active}( \mathbf{e})}\rho_4(a)\\ \leq & \; D' \max\big\{\big(\log\sfrac{N}{v_0}\big)^{2\alpha+2}\xi^2(\mathrm{active}( \mathbf{e})),\big(\log\sfrac{N}{v_0}\big)^{\alpha}\xi(v_0)\xi(\mathrm{active}( \mathbf{e}))\big\},
 \end{aligned}
 \end{equation}
 for some $D'>0$. Applying \eqref{eq:p16_1} and \eqref{eq:p16_2} in \eqref{eq:4help2} yields a bound on the denominator in \eqref{eq:4help1} from which the exponential term in the conclusion of the lemma is obtained. To conclude the proof it remains to note that the second term in the conclusion of the lemma is the bound on the probability of the occurence of $A_1(V, \mathcal{E})\cup A_2(V, \mathcal{E})\cup A_3(V, \mathcal{E})$ obtained in \cref{prop:colbd}.
\end{proof}
As a consequence of \cref{lem:concscore} we are able to bound the growth of the score from below as long as the total score of the explored vertices is not too large. To this end define, for given $s_0>0$ and $\delta_0\in(0,\sfrac12)$,
\begin{equation}\label{def:ell}
\ell_k :=\max\left\{n\in[N]: \sqrt{\frac{N}{n}}\geq\frac{ s_0 \delta_0}{1\vee \log k}\prod_{i=1}^{k-1}\sfrac{c_{\eqref{eq:FMLB}}c_{\ref{lem:tediousmain}}}{4}\big(\log\sfrac N{\ell_i}\big)^{\alpha+1} \right\}\; \textrm{ for all } k\geq 1.  
\end{equation}
If the maximum in the above definition is taken over the empty set, we let $\ell_k=1$.

\begin{rmk} Note that $\ell_k$ is defined in such a way that, up to a factor of order $\log k$, $(\xi(\ell_{k}))_{k\geq 1}$ mimics the superexponential growth of $(S_k)_{k\geq 1}$ described in \cref{lem:concscore}. It is precisely this property of $(\ell_k)_{k\geq 1}$ which makes them the correct truncation points for our exploration.
\end{rmk}	
Denoting by $K_*:=K_*(N)$ the first index $k$ for which $\ell_{k+1}=\ell_{k}$, we check that $(\ell_k)_{k\geq 1}$ satisfies the following decay condition.
\begin{lem}\label[lem]{lem:elldecay}
For any $\alpha\geq0,\delta\in(0,2\alpha+2)$ let $$k_0(\delta,\alpha)=\min\big\{k \geq 3 :\delta \log k \geq (2\alpha+2-\delta)k\log\big(1+\sfrac1k\big)+1 \big\}$$
then $$\ell_k\leq N\e^{-(2\alpha+2-\delta)(k-k_0)\log k}\;\textrm{ for all } k_0\leq k <K_*(N).$$
Also, there is a constant $c>0$ depending only on $s_0$ such that $$\ell_k\geq cN\e^{-(4\alpha+5)k(1\vee\log k)},\; \textrm{ for all }k.$$
\end{lem}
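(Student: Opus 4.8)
The plan is to analyse the recursive definition \eqref{def:ell} directly, working with the logarithm of the defining inequality. Writing $L_k := \log(N/\ell_k)$, condition \eqref{def:ell} says that $\ell_k$ is the largest integer for which $\tfrac12 L_n \geq \log(s_0\delta_0) - \log(1\vee\log k) + \sum_{i=1}^{k-1}\big[\log\tfrac{c_{\eqref{eq:FMLB}}c_{\ref{lem:tediousmain}}}{4} + (\alpha+1)\log L_i\big]$, so up to bounded additive corrections $L_k \approx 2(\alpha+1)\sum_{i=1}^{k-1}\log L_i + O(k)$. First I would record that for $k < K_*(N)$ we have $\ell_k \geq 2$, hence $L_k \leq \log N$, and also $L_k$ is nondecreasing in $k$ on this range (since each new factor in the product in \eqref{def:ell} is eventually $\geq 1$); these monotonicity facts let me control the sum $\sum_{i<k}\log L_i$ by its largest term times $k$.

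For the upper bound on $\ell_k$ (equivalently the lower bound $L_k \geq (2\alpha+2-\delta)(k-k_0)\log k$), I would argue by induction on $k$ starting from $k_0 = k_0(\delta,\alpha)$. The base case is immediate since the claimed bound is $0$ at $k=k_0$. For the inductive step, I would plug the inductive hypothesis $L_i \geq (2\alpha+2-\delta)(i-k_0)\log i$ into $L_{k+1} \geq 2(\alpha+1)\sum_{i=1}^{k}\log L_i + O(k)$; the dominant contribution comes from the terms with $i$ close to $k$, where $\log L_i \approx \log\big((2\alpha+2-\delta)(i-k_0)\log i\big) \approx \log i$ up to lower-order terms. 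Summing $\sum_{i=k_0}^{k}\log i$ and comparing with $(2\alpha+2-\delta)(k+1-k_0)\log(k+1)$ is where the precise choice of $k_0(\delta,\alpha)$ enters: the defining inequality $\delta\log k \geq (2\alpha+2-\delta)k\log(1+\tfrac1k)+1$ is exactly the slack needed to absorb both the $O(k)$ error term and the discrepancy $(2\alpha+2-\delta)\big[(k+1-k_0)\log(k+1) - (k-k_0)\log k\big] = (2\alpha+2-\delta)\big[\log(k+1) + (k-k_0)\log(1+\tfrac1k)\big]$ against the gain $2(\alpha+1)\log k$ from the new summand, since $2(\alpha+1) = (2\alpha+2-\delta) + \delta$.

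For the lower bound $\ell_k \geq cN\e^{-(4\alpha+5)k(1\vee\log k)}$ — equivalently $L_k \leq (4\alpha+5)k(1\vee\log k) + C$ — I would again induct, using the crude bound $L_i \leq (4\alpha+5)i(1\vee\log i)+C \leq (4\alpha+5)k(1\vee\log k)+C$ for all $i \leq k$, so that $\log L_i \leq \log k + O(1)$, whence $L_{k+1} \leq 2(\alpha+1)k(\log k + O(1)) + O(k) \leq (4\alpha+5)(k+1)(1\vee\log(k+1)) + C$ once the constant $C$ (and the hidden constant $c$, depending on $s_0$ through the $\log(s_0\delta_0)$ term) are chosen large enough; the factor $4\alpha+5 > 4(\alpha+1)+O(1)$ leaves ample room. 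The main obstacle is bookkeeping: the inequality in \eqref{def:ell} is "largest $n$ such that...", so $\ell_k$ and $\ell_k+1$ straddle the threshold, and one must be careful that the $O(1)$ and $O(k)$ error terms (coming from the constants $c_{\eqref{eq:FMLB}}, c_{\ref{lem:tediousmain}}, s_0, \delta_0$, from $\log(1\vee\log k)$, and from rounding to integers) are genuinely uniform in $N$ and $k$; once the recursion $L_k \approx 2(\alpha+1)\sum_{i<k}\log L_i$ is isolated cleanly, both inductions are routine.
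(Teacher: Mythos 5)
Your plan is correct and follows essentially the same route as the paper's proof: both derive from \eqref{def:ell} the one-step recursion $\log\ell_{k+1}\leq\log\ell_k-(2\alpha+2)\log\log(N/\ell_k)+O(1)$ and run an induction from $k_0$, using the splitting $2\alpha+2=(2\alpha+2-\delta)+\delta$ so that the defining inequality for $k_0$ absorbs exactly the discrepancy $(2\alpha+2-\delta)\big[\log(k+1)+(k-k_0)\log(1+\frac1k)\big]+1$, with the lower bound on $\ell_k$ handled by the analogous crude induction. The only difference is cosmetic: you phrase the recursion cumulatively as $L_k\approx 2(\alpha+1)\sum_{i<k}\log L_i+O(k)$ before reducing to the same one-step comparison the paper uses directly.
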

A verification of \cref{lem:elldecay} is provided in \cref{knownresults}, see \cref{lem:tedious3}. We conclude this section with the central result on the growth of the score in the truncated exploration. While \cref{lem:concscore} states that with high probability the total score of the active vertices grows by a factor close to $(\log N)^{\alpha+1}$ in every exploration step, the next proposition states that with high probability we may iterate the estimate of \cref{lem:concscore} and indeed reach a large score after $O(\log N / \log\log N)$ stages. 
\begin{prop}[{Score growth}]\label[prop]{prop:scoregrowth}
Let $\varepsilon,\eta>0$ and set $$K=\Big\lceil\Big(\frac{1}{2\alpha+2}+\eta\Big)\frac{\log N}{\log\log N}\Big\rceil.$$ Then there are $s_0(\varepsilon)>0,\delta_0(\varepsilon)\in(0,\sfrac12)$ and $N_0(\varepsilon,\eta)$ such that
$$\PP\Big(\xi(\mathrm{active}(\mathcal{E}_K) \cup \mathrm{dead}(\mathcal{E}_K) )\leq \frac{\sqrt{N}}{(\log N)^{\alpha+1}}\Big)\leq \varepsilon,\; \textrm{ for all }N\geq N_0,$$
where $(\Ee_{k})_{k\geq 0}$ is the exploration in $\Gg_N$ with truncation $(\ell_k)_{k\geq 1}$ as in \eqref{def:ell} that is started in a proper configuration $\Ee_0$ satisfying ${\xi(\mathrm{active}(\mathcal{E}_0))}/{\xi(\min(\mathrm{active}(\mathcal{E}_0)))}\geq s_0.$
\end{prop}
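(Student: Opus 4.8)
The plan is to iterate the one‑step estimate of \cref{lem:concscore} along the entire main‑phase exploration and to show that the random product of the one‑step growth factors reaches the threshold $\sqrt N/(\log N)^{\alpha+1}$ within $K$ stages with probability at least $1-\eps$. Fix $\beta=\tfrac14$ as the parameter for \cref{lem:concscore} and write $\gamma:=(1-\beta)\,c_{\eqref{eq:FMLB}}c_{\ref{lem:tediousmain}}/2$ for the one‑step growth constant it produces; note that $\gamma$ divided by the constant $c_{\eqref{eq:FMLB}}c_{\ref{lem:tediousmain}}/4$ occurring in \eqref{def:ell} equals $2(1-\beta)=\tfrac32$. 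Let $T:=\inf\{k\ge0\colon \xi(\mathrm{active}(\Ee_k)\cup\mathrm{dead}(\Ee_k))>\sqrt N/(\log N)^{\alpha+1}\}$, so that the assertion is $\PP(T\le K)\ge 1-\eps$. For $k\ge1$ set $V_k:=\{\ell_k,\dots,N\}\cap\mathrm{veiled}(\Ee_{k-1})$ and let $G_k$ be the event that, in the $k$th step, none of the collision events $A_1,A_2,A_3$ of \cref{prop:colbd} for this $V_k$ occurs \emph{and} $S(V_k)\ge \gamma\,(\log\sfrac N{\ell_k})^{\alpha+1}\,\xi(\mathrm{active}(\Ee_{k-1}))$. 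On $G_k$ the configuration $\Ee_k$ remains proper, and since $\mathrm{active}(\Ee_k)$ is exactly the set of pre‑active vertices found in step $k$ — all of which lie in $\{\ell_k,\dots,N\}$ — one has the recursion $\xi(\mathrm{active}(\Ee_k))=S(V_k)\ge\gamma(\log\sfrac N{\ell_k})^{\alpha+1}\xi(\mathrm{active}(\Ee_{k-1}))$.

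First I would check that \cref{lem:concscore} applies at stage $k$, with $v_0=\ell_k$, on the event $\{T>k-1\}\cap G_1\cap\dots\cap G_{k-1}$. Properness of $\Ee_{k-1}$ is inherited from the earlier $G_i$; the size conditions \eqref{eq:colbd} (with $k=2$) and \eqref{eq:conccond1} (with $A=\mathrm{active}(\Ee_{k-1})$) follow from $\xi(\ell_k)\le\sqrt N$, $\psi^2(n_0,N)\approx(\log N)^\alpha$, the inclusion $\mathrm{active}(\Ee_{k-1})\subset\{\ell_{k-1},\dots,N\}$ and the running bound $\xi(\mathrm{active}(\Ee_{k-1})\cup\mathrm{dead}(\Ee_{k-1}))\le\sqrt N/(\log N)^{\alpha+1}$ valid on $\{T>k-1\}$, all of which reduce to elementary inequalities that hold once $N$ is large; and the separation $\ell_k<(\min(\mathrm{active}(\Ee_{k-1}))/\e^2)\wedge\eta_{\ref{lem:tediousmain}}N$ holds for $k\ge2$ because $\min(\mathrm{active}(\Ee_{k-1}))\ge\ell_{k-1}$ and, by the computation behind \cref{lem:elldecay}, $\ell_k\le\ell_{k-1}\e^{-2}$ once $s_0\delta_0$ is chosen large. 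The stage $k=1$ needs a short case distinction on $\Ee_0$: if $\min(\mathrm{active}(\Ee_0))$ is small enough that $\xi(\mathrm{active}(\Ee_0))\ge s_0\xi(\min\mathrm{active}(\Ee_0))$ already exceeds $\sqrt N/(\log N)^{\alpha+1}$ then $T=0$; otherwise $\min(\mathrm{active}(\Ee_0))\ge s_0^2(\log N)^{2\alpha+2}$, and using $v_0=\min(\ell_1,\lfloor\min(\mathrm{active}(\Ee_0))/\e^3\rfloor)$ as first truncation level restores the separation while only decreasing $v_0$, hence only accelerating the growth. Granting this, iterating the recursion gives, on $\{T>K\}\cap\bigcap_{k\le K_*-1}G_k$ with $K_*=K_*(N)$ as in \cref{lem:elldecay} (and $K_*\le K$ for $N\ge N_0(\eta)$),
\[
\xi(\mathrm{active}(\Ee_{K_*-1}))\ \ge\ \xi(\mathrm{active}(\Ee_0))\,\prod_{k=1}^{K_*-1}\gamma\,(\log\sfrac N{\ell_k})^{\alpha+1}.
\]
From \cref{lem:elldecay} one reads off $\xi(\ell_{K_*-1})\approx\sqrt N/(\log N)^{\alpha+1}$, while the product above is comparable to $\xi(\ell_{K_*-1})\,(\tfrac32)^{K_*-2}\,\xi(\mathrm{active}(\Ee_0))/(s_0\delta_0)$; since $\xi(\mathrm{active}(\Ee_0))\ge s_0$ and $(\tfrac32)^{K_*}=\exp(\Theta(\log N/\log\log N))$ dominates every power of $\log N$, the right‑hand side exceeds $\sqrt N/(\log N)^{\alpha+1}$ for $N$ large. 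Thus $\{T>K\}\subseteq\bigcup_{k=1}^{K_*-1}G_k^c\subseteq\bigcup_{k=1}^{K}G_k^c$.

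It then remains to bound $\PP(T>K)\le\sum_{k=1}^{K_*-1}\PP(\{T>k-1\}\cap G_1\cap\dots\cap G_{k-1}\cap G_k^c)$ via \cref{lem:concscore}, which splits each summand into a \cref{prop:colbd}‑collision term and an exponential term. On $\{T>k-1\}$ one has $\xi(\mathrm{active}(\Ee_{k-1}))^2-\xi^2(\mathrm{active}(\Ee_{k-1}))\le N/(\log N)^{2\alpha+2}$, so every collision term is at most $C_{\ref{prop:colbd}}(\log N)^{2\alpha+1}/(\log N)^{2\alpha+2}=C_{\ref{prop:colbd}}/\log N$ and their sum over $k\le K_*-1=O(\log N/\log\log N)$ is $O(1/\log\log N)\le\eps/2$ for $N$ large. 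For the exponential terms, note that on $G_1\cap\dots\cap G_{k-1}$ the score has already grown, and using $\xi^2(\mathrm{active}(\Ee_{k-1}))\le\xi(\min\mathrm{active}(\Ee_{k-1}))\,\xi(\mathrm{active}(\Ee_{k-1}))\le\xi(\ell_{k-1})\,\xi(\mathrm{active}(\Ee_{k-1}))$ together with \eqref{def:ell} one bounds each of the three quantities in the minimum $M_k$ in the exponent of \cref{lem:concscore} below by $c_*(\tfrac32)^{k-1}\wedge(\log N)^{2\alpha+4}$, where $c_*$ can be made arbitrarily large by choosing $\delta_0$ small and then $s_0$ large — this is exactly why \eqref{def:ell} carries the constant $c_{\eqref{eq:FMLB}}c_{\ref{lem:tediousmain}}/4$ against the $c_{\eqref{eq:FMLB}}c_{\ref{lem:tediousmain}}/2$ of \cref{lem:concscore} with $\beta\le\tfrac12$. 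Hence $\sum_{k\ge1}\e^{-\beta^2 c M_k}\le\sum_{j\ge1}\e^{-\beta^2 c\,c_*(3/2)^{j-1}}+K\,\e^{-\beta^2 c(\log N)^{2\alpha+4}}$, which is $\le\eps/2$ once $\delta_0(\eps),s_0(\eps)$ are fixed and $N\ge N_0$; adding the two contributions yields $\PP(T>K)\le\eps$.

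I expect the main obstacle to be precisely this last step, which also explains the intricate shape of \eqref{def:ell}: near the start of the exploration the one‑step failure probability supplied by \cref{lem:concscore} is only $\e^{-\Theta(s_0)}$, a fixed constant rather than $o(1)$, so a crude union bound over the $\Theta(\log N/\log\log N)$ stages would diverge. One is forced to show that the score ratios $\xi(\mathrm{active}(\Ee_{k-1}))/\xi(\ell_{k-1})$, and with them the exponents $M_k$, grow geometrically in $k$, which demands that the truncation levels shrink at exactly the rate set by the per‑stage growth factor $(\log N)^{\alpha+1}$; verifying this self‑consistency of \eqref{def:ell}, together with the bookkeeping around the first stage and the observation that the threshold is crossed near stage $K_*<K$ (so the degenerate regime $\ell_k=1$ is never reached), is the technically demanding part. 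The collision estimate and the deterministic growth count are comparatively routine given \cref{prop:colbd}, \cref{lem:concscore} and \cref{lem:elldecay}.
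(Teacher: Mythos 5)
Your proposal is correct and follows essentially the same route as the paper: iterate \cref{lem:concscore} along a first-failure decomposition, verify its hypotheses at each stage from the running bound $H_{k-1}\leq \sqrt N/(\log N)^{\alpha+1}$ and the decay of $(\ell_k)$, bound the collision contributions by $O(1/\log\log N)$, and make the exponential error terms summable. The only real difference is the summability mechanism: you take $\beta=\tfrac14$ so that the realised growth factor exceeds the one in \eqref{def:ell} by $\tfrac32$ per step and the exponents grow geometrically, whereas the paper takes $\beta=\tfrac12$ (exactly matching \eqref{def:ell}) and instead extracts the growth of the exponent from the $(1\vee\log k)^{-1}$ factor built into \eqref{def:ell}, giving error terms of order $k^{-c/\delta_0}$ — both choices work, and your handling of the first stage is if anything more explicit than the paper's.
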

\begin{proof}
We first note that, for fixed $\eta>0$, $K_*<K$ for all sufficiently large $N$ by the first statement in \cref{lem:elldecay}. We wish to iteratively apply \cref{lem:concscore} until $k\leq K_*(N)$ is so large that the second conclusion of \cref{lem:elldecay} allows us to establish the lower bound for $\xi(\mathrm{active}(\mathcal{E}_K) \cup \mathrm{dead}(\mathcal{E}_K))$. To this end let, for $k\geq 0,$ $$S_k:=\xi(\mathrm{active}(\mathcal{E}_k)),\quad H_k:=\xi(\mathrm{active}(\mathcal{E}_k) \cup \mathrm{dead}(\mathcal{E}_k)),  \quad a_k:=\min(\mathrm{active}(\mathcal{E}_k)),$$ 
and furthermore $$K_0:=K_0(N):=\min\Big\{k: H_k > \frac{\sqrt{N}}{(\log N)^{\alpha+1}}\Big\}.$$ To accomplish this, we need to bound the total probability of error which arises by repeatedly applying \cref{lem:concscore}. The proof is complete once we have verified the following three claims:

\begin{enumerate}[$(i)$]
\item For given $\varepsilon,\delta_0$ we may choose $s_0>0$ such that, for all sufficiently large $N$,
the configuration $\Ee_0$ and $\ell_1$ satisfy the conditions of \cref{lem:concscore} unless $K_0=0.$ Additionally, with probability exceeding $1-\gamma_1$, for \smash{$\gamma_1:={6\varepsilon}/({2\pi^2})$} we have, for all sufficiently large $N$, 
$$S_1> \sfrac{c_{\eqref{eq:FMLB}}c_{\ref{lem:tediousmain}}}{4}\big(\log\sfrac N{\ell_1}\big)^{\alpha+1}S_0.$$
\item Conditional on $k< K_0\wedge K_*$ and $\mathcal{E}_j,1\leq j\leq k$, satisfying 
$$S_j> \sfrac{c_{\eqref{eq:FMLB}}c_{\ref{lem:tediousmain}}}{4}\big(\log\sfrac N{\ell_j}\big)^{\alpha+1}S_{j-1},$$ the configuration $\Ee_k$ and $\ell_{k+1}$ satisfy the conditions of \cref{lem:concscore}. Consequently, we can find $\gamma_k>0$, such that with conditional probability exceeding $1-\gamma_k$ we have
$$ S_{k+1}> \sfrac{c_{\eqref{eq:FMLB}}c_{\ref{lem:tediousmain}}}{4}\big(\log\sfrac N{\ell_{k+1}}\big)^{\alpha+1}S_{k},$$ and thus $K_*\geq K_0\geq k+1.$
\item $\delta_0=\delta_0(\varepsilon)$ may be fixed in such a way that $(\gamma_k)_{k\geq 1}$ from (i) and (ii) satisfies $\sum_{k=1}^{L}\gamma_k<{\varepsilon}$ as $N\to\infty$ for any $L=O(\log N(\log\log N)^{-1})$.
\end{enumerate}
Note that $(\gamma_k)_{k\geq 1}$ serves as a proxy for the probability that in exploration step $k+1$ the exploration process violates the conditions of \cref{lem:concscore}.
\medskip

\emph{Proof of (i):} If $K_0=0$, then there is nothing to show. Let $K_0>0.$ Given $\delta_0$, the condition $\ell_1<{a_0}/{\e^2}$ is satisfied by choosing $s_0$ sufficiently large. The conditions $\ell_1<\eta_{\ref{lem:tediousmain}}N$, \eqref{eq:colbd} and \eqref{eq:conccond1} are now implicit in the assumption $K_0>0$, for all sufficiently large $N$. Application of \cref{lem:concscore} with $\beta=\frac12$ yields $$\PP\Big(S_1\leq \sfrac{c_{\eqref{eq:FMLB}}c_{\ref{lem:tediousmain}}}{4}\big(\log\sfrac N{\ell_1}\big)^{\alpha+1}\xi(\mathrm{active}( {\mathcal{E}_0}))\Big)\leq \e^{-\frac{c_{\ref{lem:concscore}}}{4}s_0}+o(1),$$ as $N\to\infty,$ thus, after possibly increasing $s_0$ again, $(i)$ holds.\medskip

\emph{Proof of (ii):} Assume that $K_*>K_0>k$ and note that this implies $\xi(\ell_{k+1})\leq \sqrt{N}$. By definition of the exploration we have $a_k\geq \ell_k$. By \cref{lem:elldecay} and the definition of
 $(\ell_k)_{k\geq 1}$, the network size
$N$ can be  chosen so large that $(\ell_k)_{k\geq 1}$ decays faster than $(\e^{-2k})_{k\geq 1}$  for all $k<K_*(N)$. In particular $\ell_{k+1}<{a_k}/{\e^2}$ holds. As in the proof of $(i)$, $K_0>k$ implies that \eqref{eq:conccond1} is satisfied and also, using $\xi(\ell_{k+1})\leq\sqrt{N},$ \eqref{eq:colbd} must hold. Hence we may again apply \cref{lem:concscore} with $\beta=\frac12$ to obtain that, conditionally on $\mathcal{E}_k$,
\begin{align} &\PP\Big(S_{k+1}\leq \sfrac{c_{\eqref{eq:FMLB}}c_{\ref{lem:tediousmain}}}{4}\big(\log\sfrac N{\ell_{k+1}}\big)^{\alpha+1}S_{k}\Big) \notag\\
&\leq \;\exp\big(-\sfrac{c_{\ref{lem:concscore}}}{4}\min\big\{\sfrac{S_k^2}{\xi^2(\mathrm{active}(\mathcal{E}_k))},\sfrac{S_k(\log(N{\ell^{-1}_{k+1}}))^{\alpha+2}}{\xi(\ell_{k+1})}, \ell_{k+1}(\log (N{\ell^{-1}_{k+1}}))^{2}\big\}\big)\notag\\ & \phantom{averyveryverylongthing} + C_{\ref{prop:colbd}} \big( \log\sfrac{N}{\ell_{k+1}}\vee 1\big)^{2\alpha+1}\frac{S_k^2}{N}\notag\\
&=:\Delta_k+\Gamma_k.\label{eq:finalbound}
\end{align}
The conclusion $K_*\geq K_0\geq k+1$ holds if $$S_{k+1}> \sfrac{c_{\eqref{eq:FMLB}}c_{\ref{lem:tediousmain}}}{4}\big(\log\sfrac N{\ell_{k+1}}\big)^{\alpha+1}S_{k},$$ as ${S_{k+1}}/{S_k}\geq {\xi(\ell_{k+1})}/{\xi(\ell_{k})},$ by choice of the defining recursion \eqref{def:ell} for the truncation $(\ell_{k})_{k\geq 1}$.\\
\\
\emph{Proof of (iii):} It remains to bound the random terms $\Delta_k+\Gamma_k, k\leq K_0,$ appearing in \eqref{eq:finalbound} by some deterministic sequence $\gamma_k$ with the desired summability property. We start with~$\Gamma_k$. Since $S_k< H_{K_0},$ we get $$\Gamma_k\leq C_{\ref{prop:colbd}}(\log N)^{2\alpha+1}\frac{S_k^2}{N}\leq \frac{C_{\ref{prop:colbd}}}{\log N},$$ thus $\sum_{k=1}^{L} \Gamma_k=O((\log\log N)^{-1})$ for $L=O(\log N(\log\log N)^{-1}).$ To bound $\Delta_k$, we analyse the three terms under minimisation separately. Since $x\mapsto x(\log({N}/{x}))^2$ is strictly increasing on $[1,{N}/{\e^2}]$, the deterministic rightmost term satisfies $$\ell_{k+1}\big(\log ({N}/{\ell_{k+1}})\big)^{2}\geq (\log N)^2.$$ By definition of $(\ell_k)_{k\geq 1}$ and \eqref{eq:xitoroot} there is some constant $c$, independent of $k,N$ and $\varepsilon$, such that 
\smash{$\xi(\ell_{k+1})\leq c (\log({N}/{\ell_{k}}))^{\alpha+1}\xi(\ell_k)$} and thus $$\frac{S_k(\log\sfrac N{\ell_{k+1}})^{\alpha+2}}{\xi(\ell_{k+1})}\geq\frac{S_k(\log\sfrac{N}{\ell_{k+1}})^{\alpha+2}}{c \big(\log\sfrac{N}{\ell_{k}}\big)^{\alpha+1}\xi(\ell_{k})}\geq \frac{S_k}{c\xi(\ell_k)}.$$
Since $$\sum_{u\in\mathrm{active}(\Ee_k)}\xi^2(u)\leq \xi(a_k)S_k\leq \xi(\ell_k)S_k,$$ we also have $$\frac{S_k^2}{\xi^2(\mathrm{active}(\mathcal{E}_k))}\geq\frac{S_k}{\xi(\ell_k)}.$$ On the conditioning event of $(ii)$, we have \begin{equation}
\label{eq:S_vs_ell}
S_k\geq s_0\prod_{i=1}^{k}\frac{c_{\eqref{eq:FMLB}}c_{\ref{lem:tediousmain}}}{4}\big(\log\sfrac N{\ell_{i}}\big)^{\alpha+1}
\end{equation} 
and thus $$\frac{S_k}{\xi({\ell_k})}\geq (1\vee \log k)\frac{c'}{\delta_0},$$
for some constant $c'>0$. Combining all estimates we obtain, for some $c''>0$, $$\Delta_k\leq\e^{-c''\big(\sfrac{(1\vee \log k)}{\delta_0}\wedge (\log N)^2\big)}.$$ This implies that by choosing $\delta_0$ small enough we may obtain $\Delta_k\leq \big({6\varepsilon}/({2\pi^2 k^2})\big)\vee N^{-c''\log N}$ and the last claim is proved.
\end{proof}

\subsection{Connectivity of high degree vertices}
{
We now provide a connectivity result for those vertices in $\Gg_N$ which have a very high degree. This sprinkling-type argument is close in spirit to the proof of a diameter result for the `inner core' of a different preferential attachment model in \cite{dommers_diameters_2010}.
\medskip

Fix a sequence $(M_N)_{N\in\N}$ of positive integers satisfying $\log M_N=o(\log N)$. We will now define a random subset $C_N\subset[N]$ of 
size 
at most $M_N$ which has small diameter in $\Gg_N$. To this end, fix $\eE\in(0,1)$ and associate $N\in\N$ with $N_\eps= \lceil (1+\eE)^{-1} N\rceil$. Assuming that $N$ is sufficiently large such that $M_N\leq N_\eE$ we call the elements of the random set 
	\begin{equation}
	\label{def:CN}
	C_N=\{v=1,\dots,M_N: f(\Zz[v,N_\eE])\geq  \sfrac{1}{2}\EE f(\Zz[M_N,N_\eE])\}
	\end{equation}
	\emph{core vertices} of $\Gg_N$. We show below that the diameter of $C_N$ in the random graph $\Gg_N$ is bounded with high probability, but first we provide an estimate for the number of vertices in $C_N$. 
	
	\begin{lem}[Size of $\text{core}_N$]\label{lem:sizelemma}
		There exists a constant $c=c(\eps)>0$ such that
		$$
		\# C_N\geq c M_N, \textrm{ with high probability as }N\to\infty,
		$$
		where $C_N$ is as in \eqref{def:CN}.
	\end{lem}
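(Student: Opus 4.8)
The plan is to show that each fixed vertex $v\in\{1,\dots,M_N\}$ lies in $C_N$ with probability bounded away from zero, and then use a concentration argument over $v$ to deduce that a positive proportion of them do. First I would observe that, by \cref{prop:expbounds1} and \cref{prop:expbounds2}, the threshold $\frac12\EE f(\Zz[M_N,N_\eps])$ is of order $\sqrt{N_\eps/M_N}\,(1\vee\log(N_\eps/M_N))^\alpha=\sqrt{N/M_N}\,(\log N)^{\alpha+o(1)}$, using $\log M_N=o(\log N)$ and $N_\eps\approx N$; more precisely it is bounded above by $\frac12 C_{\ref{prop:expbounds1}}(\mathrm{const})\sqrt{N/M_N}(\log N)^{\alpha}$ up to constants. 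Meanwhile, for a fixed $v\le M_N$ started (say) at $\Zz[v,v]=1$, the expected score $\EE f(\Zz[v,N_\eps])$ is of the \emph{larger} order $\sqrt{N/v}\,(\log N)^{\alpha+o(1)}$, which dominates the threshold as soon as $v=o(M_N)$, and is at least comparable for $v\asymp M_N$. So the event $\{f(\Zz[v,N_\eps])\ge \frac12\EE f(\Zz[M_N,N_\eps])\}$ is asking the score of vertex $v$ to be at least a constant fraction of its own mean (for $v$ in, say, the first half of $\{1,\dots,M_N\}$ this is immediate; for the rest a Paley–Zygmund-type bound is needed).

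Concretely, for the vertices $v\le M_N/2$ I would apply the Paley–Zygmund inequality: since by \cref{prop:expbounds1} and \cref{prop:expbounds2} we have $\EE f(\Zz[v,N_\eps])^2\le C'(1)\,\frac{N_\eps}{v}(\log\tfrac{N_\eps}{v})^{2\alpha}$ and $\EE f(\Zz[v,N_\eps])\ge c\sqrt{N_\eps/v}(\log\tfrac{N_\eps}{v})^\alpha$, the second-moment-to-first-moment-squared ratio $\EE[f(\Zz[v,N_\eps])^2]/(\EE f(\Zz[v,N_\eps]))^2$ is bounded by a constant $\Theta$ depending only on $\alpha,\beta$. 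Hence for any $\theta\in(0,1)$,
\begin{equation*}
\PP\big(f(\Zz[v,N_\eps])\ge \theta\,\EE f(\Zz[v,N_\eps])\big)\ \ge\ (1-\theta)^2/\Theta.
\end{equation*}
For $v\le M_N/2$ one checks from the two-sided bounds above and the monotonicity $m\mapsto\EE f(\Zz[m,N_\eps])$ is decreasing in $m$ that $\EE f(\Zz[v,N_\eps])\ge \theta^{-1}\cdot\frac12\EE f(\Zz[M_N,N_\eps])$ for a suitable fixed $\theta=\theta(\eps)\in(0,1)$, once $N$ is large (here the $\log$-factors only help since $\log(N_\eps/v)\ge\log(N_\eps/M_N)$). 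Combining, $\PP(v\in C_N)\ge p$ for some $p=p(\eps)>0$ and all $v\le M_N/2$, $N$ large. Therefore $\EE\#C_N\ge p\,M_N/2$.

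Finally I would upgrade this expectation bound to a with-high-probability statement. The indicators $\{v\in C_N\}$, $v\le M_N$, are not independent (the degree evolutions $\Zz[v,\cdot]$ are independent as Markov chains, but the \emph{events} defining $C_N$ both depend on the graph only through those chains—so in fact $\one\{v\in C_N\}$ depends only on the evolution $(\Zz[v,n])_{n\ge v}$, and these are mutually independent across $v$!). This is the key simplification: by the structural remark at the start of Section 3.1, $\#C_N=\sum_{v=1}^{M_N}\one\{f(\Zz[v,N_\eps])\ge\frac12\EE f(\Zz[M_N,N_\eps])\}$ is a sum of independent Bernoulli variables. A standard Chernoff/Hoeffding bound then gives $\PP(\#C_N<\tfrac12\EE\#C_N)\le e^{-c\,M_N}\to 0$ since $M_N\to\infty$, which yields the claim with $c(\eps)=p(\eps)/4$. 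The main obstacle is purely bookkeeping: carefully tracking the $(\log)^{o(1)}$ and $o(1)$ terms so that the threshold $\frac12\EE f(\Zz[M_N,N_\eps])$ is genuinely dominated by $\theta\,\EE f(\Zz[v,N_\eps])$ for all $v\le M_N/2$ uniformly in $N$; this is where the hypothesis $\log M_N=o(\log N)$ and the explicit constants from \cref{prop:expbounds1,prop:expbounds2} are used, and one must be slightly careful because for $v$ close to $M_N$ the first and second moments of $f(\Zz[v,N_\eps])$ and $f(\Zz[M_N,N_\eps])$ are only comparable up to constants, which is exactly why one restricts to $v\le M_N/2$ and loses a harmless factor $\tfrac12$ in the final count.
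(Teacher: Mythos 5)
Your proposal is correct and follows essentially the same route as the paper: Paley--Zygmund applied to $f(\Zz[v,N_\eps])$, the two moment bounds of \cref{prop:expbounds1,prop:expbounds2} to make the Paley--Zygmund ratio uniform, the monotonicity $\EE f(\Zz[v,N_\eps])\ge\EE f(\Zz[M_N,N_\eps])$ to pass from each vertex's own mean to the common threshold, and independence of the degree evolutions to concentrate the count. The only (harmless) deviations are your restriction to $v\le M_N/2$, which the monotonicity you already invoke renders unnecessary, and the explicit Chernoff step, which the paper leaves implicit.
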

	
	\begin{proof}
		Note that by the Paley-Zygmund inequality one has for  $v\in [M_N]$  
		\begin{align*}\PP\big(f(\Zz[v,N_\eE])\geq\sfrac{1}{2}\EE f(\Zz[v,N_\eE])\big)  
		\;\geq \;\frac{(\EE f(\Zz[v,N_\eE]))^2}{4\, \EE f(\Zz[v,N_\eE])^2}=:p(v,N).
		\end{align*}
		By Propositions~\ref{prop:expbounds1} and \ref{prop:expbounds2} there exists $p^*>0$ such that for large $N\in\N$, $p(v,N)>p^*$ for all $v\leq M_N$. Since further 
		the degree evolutions $(\Zz[v,\cdot]:v=1,\dots,M_N)$ are independent and $\EE f(\Zz[v,N_\eps])\geq \EE f(\Zz[M_N,N_\eps])$, for all $v\in[M_N]$, we conclude that
		$$
		\frac {\# C_N}{M_N}\geq p^*/2, \text{ with high probability.}
		$$
	\end{proof}

	\begin{prop}[Diameter of the core]\label{prop:cores} Let $C_N$ be as in \eqref{def:CN} and $M_N=\lfloor (\log N)^R \rfloor$ for some $R>0$. Then, with high probability as $N\to \infty$ we have $$\max_{u,v\in C_N} d_N(u,v)\leq \max\Big(6, \Big\lfloor\frac{R}{\alpha}\Big\rfloor+2\Big).$$
	\end{prop}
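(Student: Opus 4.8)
The plan is a sprinkling argument: we use only the edges of $\Gg_N$ created between times $N_\eE$ and $N$ to connect $C_N$ to progressively older and denser sub-cores, until we reach one of bounded diameter. Set $C^R:=C_N$ and, for $0<r<R$,
$$C^r:=\{v\le(\log N)^r:\ f(\Zz[v,N_\eE])\ge T_r\},\qquad T_r:=\tfrac12\EE f(\Zz[\lceil(\log N)^r\rceil,N_\eE]).$$
By \cref{prop:expbounds1,prop:expbounds2} one has $T_r\asymp\sqrt N(\log N)^{\alpha-r/2}$, and every vertex of $C_N$ has $f(\Zz[v,N_\eE])$ at least of this order as well. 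Since $\EE f(\Zz[v,N_\eE])\ge\EE f(\Zz[\lceil(\log N)^r\rceil,N_\eE])=2T_r$ for $v\le(\log N)^r$ by monotonicity of $m\mapsto\EE f(\Zz[m,N_\eE])$, the Paley--Zygmund inequality together with the moment bounds of \cref{prop:expbounds1,prop:expbounds2} gives $\PP(v\in C^r)\ge p^*$ for a universal $p^*>0$; by independence of the degree evolutions and \cref{lem:CL27} it follows that, on an event $\mathcal G$ measurable with respect to the $\sigma$-field $\mathcal F_{N_\eE}$ generated by $\Gg_{N_\eE}$ and satisfying $\PP(\mathcal G)\to1$, we have $\#C^r\ge\tfrac{p^*}{2}(\log N)^r$ for all $r$ in any fixed finite set of levels. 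From here on I argue conditionally on $\mathcal F_{N_\eE}$, on $\mathcal G$.

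For the sprinkling, note that for $v\le M_N$ and $N_\eE<w\le N$ we have $f(\Zz[v,w-1])/(w-1)\le1$ and, as $f$ is nondecreasing and $\Zz[v,\cdot]$ is non-decreasing, $f(\Zz[v,w-1])/(w-1)\ge f(\Zz[v,N_\eE])/(w-1)$. Processing the new vertices $w=N_\eE+1,\dots,N$ in order therefore produces a coupling in which $\Gg_N$ contains a graph $G'$ on $[M_N]$ whose edges $w\to_{G'}v$ (for $N_\eE<w\le N$, $v\le M_N$) are, conditionally on $\mathcal F_{N_\eE}$, present independently with probability $f(\Zz[v,N_\eE])/(w-1)$; independence is genuine because these probabilities are $\mathcal F_{N_\eE}$-measurable. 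I split $\{N_\eE+1,\dots,N\}$ into finitely many consecutive blocks $J_1,\dots,J_{m+1}$, each of size of order $N$, and use only block $J_j$ in stage $j$ below, so that different stages involve disjoint and hence independent sets of $G'$-edges.

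The construction rests on two estimates. First, if $r<2\alpha$ then, for any $u,v\in C^r$, the number of $w$ in a block of size $\asymp N$ with $w\to_{G'}u$ and $w\to_{G'}v$ is a sum of independent indicators with mean at least $c(\log N)^{2\alpha-r}\to\infty$, so the probability that there is no such $w$ is at most $\exp(-c(\log N)^{2\alpha-r})$; a union bound over the at most $(\log N)^{2r}$ pairs shows $\mathrm{diam}_{\Gg_N}(C^r)\le2$ with high probability. Second, if $r'<r\le R$ with $r-r'<4\alpha$, then for a fixed $u\in C^r$ the number of $w$ in a block with $w\to_{G'}u$ and $w\to_{G'}v$ for some $v\in C^{r'}\setminus\{u\}$ is a sum of independent indicators with mean at least $c(\log N)^{2\alpha-(r-r')/2}\to\infty$ --- here I use $\#C^{r'}\ge\tfrac{p^*}{2}(\log N)^{r'}$, that each such $v$ has $f(\Zz[v,N_\eE])\ge T_{r'}$, and \eqref{eq:xitoroot} --- so the probability that this $u$ has no length-$2$ connection into $C^{r'}$ through that block is at most $\exp(-c(\log N)^{\eta})$ for some $\eta>0$, and a union bound over the at most $(\log N)^R$ vertices $u\in C^r$ shows that, with high probability, every vertex of $C^r$ is within graph distance $2$ of $C^{r'}$. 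Fixing a small $\delta>0$ I now choose levels $r_0=R$ and $r_j=\max(r_{j-1}-4\alpha+\delta,\alpha)$, stopping at the first index $m$ with $r_m<2\alpha$; since $R$ and $\alpha$ are fixed this occurs after $m=O(R/\alpha)$ steps. Applying the second estimate at stage $j$ with block $J_j$ places every vertex of $C_N=C^{r_0}$ within distance $2m$ of $C^{r_m}$, and the first estimate with block $J_{m+1}$ gives $\mathrm{diam}_{\Gg_N}(C^{r_m})\le2$; hence $\mathrm{diam}_{\Gg_N}(C_N)\le2m+2$ with high probability, and a short computation with the recursion for $(r_j)$ shows $2m+2\le\max(6,\lfloor R/\alpha\rfloor+2)$.

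I expect the main difficulty to be twofold: setting up the sprinkling coupling so that $G'$ has genuinely independent edges, which is what forces the conditioning on $\mathcal F_{N_\eE}$ and the use of the crude lower bound $\Zz[v,w-1]\ge\Zz[v,N_\eE]$; and proving the descent estimate uniformly over the polylogarithmically many candidate starting vertices, for which the exponential tail for sums of independent indicators --- rather than a mere second-moment bound --- is essential. Identifying the precise exponents of $\log N$ that occur in the two estimates and verifying the final inequality $2m+2\le\max(6,\lfloor R/\alpha\rfloor+2)$ is routine bookkeeping.
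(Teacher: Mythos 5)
Your sprinkling setup, the nested cores $C^r$ with their size and degree estimates, and the two per-step connection estimates are all sound, and the route (iterated descent through denser and denser sub-cores using disjoint blocks of late vertices) is genuinely different from the paper, which instead couples the whole of $C_N$ at once to a single multinomial random graph and invokes the classical diameter theorem for dense $G(n,p)$. However, the final assembly contains a real error. If every vertex of $C_N$ is within distance $2m$ of $C^{r_m}$ and $\mathrm{diam}_{\Gg_N}(C^{r_m})\le 2$, then for $u,v\in C_N$ you must descend from \emph{both} endpoints before crossing, so the conclusion is $d_N(u,v)\le 2m+2+2m=4m+2$, not $2m+2$.

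This is not just a typo, because $4m+2$ does not satisfy the claimed inequality for all parameters. Your recursion forces $m$ to be the least integer with $m(4\alpha-\delta)>R-2\alpha$, i.e.\ essentially $m=\lceil R/(4\alpha)-1/2\rceil$; for $R/\alpha$ slightly above $6$ this gives $m=2$ and hence $4m+2=10$, whereas $\max(6,\lfloor R/\alpha\rfloor+2)=8$. More generally the symmetric descent produces path lengths congruent to $2$ modulo $4$, which overshoots the target by $2$ whenever $R/\alpha\in[4k+2,4k+4)$ for an integer $k\ge1$. The loss is exactly that both endpoints are made to descend the same number of levels; the constraint per hop (a drop of less than $4\alpha$ in the exponent per two edges, and a crossing requiring $r_m+r_m'<4\alpha$) would permit an \emph{asymmetric} descent (e.g.\ $u$ descends $k$ levels and $v$ descends $k'$ levels with $k+k'+1>R/(2\alpha)$), which recovers the stated bound; this is precisely the flexibility that the Erd\H{o}s--R\'enyi diameter theorem used in the paper's proof exploits automatically. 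So either rework the last step with an asymmetric descent and a generalised crossing condition, or follow the paper's multinomial-graph coupling.
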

For the proof of \cref{prop:cores} we use \emph{multinomial random graphs}. This random graph model depends on three parameters: a finite set of vertices $\mathcal{V}$, an iteration number $t$ and a success probability $r\geq 0$ with
$$
r \frac{\#\mathcal{V}(\#\mathcal{V}-1)}2\leq 1.
$$
The corresponding multinomial random graph is an undirected multigraph that is constructed as follows. We denote by $\mathcal A(v,w)$ the random number of edges that connect two distinct vertices $v$ and $w$ of $\mathcal V$. An $\mathcal M(\mathcal V, t,r)$-graph $(\mathcal V,\mathcal A)$ is obtained by choosing
$$
(\mathcal A(v,w): v<w  \text{ distinct vertices of }\mathcal V)
$$
multinomially distributed with $t$ draws and identical success probabilities $r$. Note that we do not assume that $r \frac{\#\mathcal{V}(\#\mathcal{V}-1)}2=1$ which means that formally the random vector has to be extended by a dummy variable which gets the remaining mass. 

Recall that the sum of two independent multinomial random variables with identical success probabilities is again multinomial. Hence the sum of two independent multinomial random graphs with identical sets of vertices and success probabilities is a multinomial random graph with the same success probability with the number of draws being the sum of the two  draw parameters. We will make use of this fact in the proof of {\cref{prop:cores}} below.

\begin{lem}\label{lem:multinomial}
Let $(\mathcal{V},\mathcal{A})\sim\mathcal{M}(\mathcal V,t,r)$ with $rt\geq \#\mathcal{V}^{\rho-1}$ for some $\rho>0$. Then, with high probability as $t\to\infty$,  the diameter of $(\mathcal{V},\mathcal{A})$ is bounded by $\max(3,\lfloor1/\rho\rfloor+1)$.
\end{lem}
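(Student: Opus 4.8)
The plan is to reduce the multinomial random graph to a standard Erdős–Rényi comparison and then run a second-moment/breadth-first argument on neighbourhood sizes. First I would pass from the multinomial model $\mathcal{M}(\mathcal{V},t,r)$ to a simpler binomial model: since each pair $v<w$ receives $\mathcal{A}(v,w)$ edges from a multinomial vector with $t$ draws and per-pair success probability $r$, the event $\{\mathcal{A}(v,w)\geq 1\}$ has probability $1-(1-r)^t$, and these events are \emph{negatively associated} (multinomial coordinates are negatively associated, and increasing functions of disjoint coordinate blocks preserve this). Write $n=\#\mathcal{V}$ and $p:=1-(1-r)^t$. The hypothesis $rt\geq n^{\rho-1}$ together with $r\binom{n}{2}\leq 1$, i.e. $r\leq 2/(n(n-1))$, gives $rt\to 0$ when $n$ stays bounded but forces $t\to\infty$; in all cases one checks $p\geq \tfrac12(rt\wedge 1)\geq c\,n^{\rho-1}$ for large $t$ (using $1-(1-r)^t\geq 1-e^{-rt}\geq \tfrac12(rt\wedge 1)$). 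So the underlying simple graph stochastically dominates (in the sense of containing each edge with at least this probability, with negative association) an Erdős–Rényi graph $G(n,p)$ with $p\geq c\,n^{\rho-1}$, and it suffices to bound the diameter of this $G(n,p)$ by $\max(3,\lfloor 1/\rho\rfloor+1)$ with high probability.

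Next I would handle the two regimes of $n$. If $n$ stays bounded along a subsequence, then $p\to1$ (because $p\geq c n^{\rho-1}$ would only give a constant, so here one uses instead $p=1-(1-r)^t$ with $rt\geq n^{\rho-1}\geq$ const and $t\to\infty$, forcing $p\to1$), hence the graph is complete with high probability and the diameter is $1\leq\max(3,\cdot)$. The interesting case is $n\to\infty$ with $p\geq c\,n^{\rho-1}$. Set $D:=\lfloor 1/\rho\rfloor+1$, so that $D\rho>1$, i.e. $n^{D\rho}\gg n$; equivalently $p^{D}n^{D-1}=(pn)^{D}/n\geq c^D n^{D\rho}/n\to\infty$. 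I would show: (a) for every vertex $v$, the ball $B_{\lceil D/2\rceil}(v)$ has size at least $\sqrt{n}\,/\,(\log n)$, say, with probability $1-o(n^{-1})$; and (b) any two sets of size $\geq \sqrt{n}/(\log n)$ are joined by an edge with probability $1-o(n^{-2})$. Combining (a) over the (at most two) balls around a given pair $u,w$ and (b) for those balls gives $d(u,w)\leq 2\lceil D/2\rceil+1$; a short check separating $D$ even/odd shows $2\lceil D/2\rceil + 1 \leq \max(3,D+1)$, which is exactly the claimed bound (the constant $3$ covers the small cases $D=1,2$ where one argues directly that $G(n,p)$ with $pn\to\infty$ has diameter $\leq 2$ or $3$).

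For step (b), if $|A|,|B|\geq m$ with $A\cap B=\emptyset$ then $\PP(\text{no }A\text{–}B\text{ edge})\leq (1-p)^{m^2}\leq e^{-pm^2}$; with $m=\sqrt n/\log n$ and $p\geq cn^{\rho-1}\geq c/n$ this is $e^{-\Omega(1/(\log n)^2\cdot n^{\rho})}$, which decays faster than any polynomial. (If $A,B$ overlap, they already share a vertex, trivially at distance $\le 2$ through it.) For step (a) I would run a breadth-first exploration from $v$ for $\lceil D/2\rceil$ rounds, using at each round the standard one-step growth lemma for $G(n,p)$: if the explored-plus-frontier set $S$ has size $s$ with $ps=o(1)$ and $s\leq \sqrt n$, then by a second-moment (or Chernoff) estimate the number of new vertices discovered is at least $\tfrac12 p s (n-s)\geq \tfrac14 psn$ with probability $1-e^{-\Omega(psn)}$, and $psn\geq c\,sn^{\rho}$ which is superpolynomially small in its failure probability once $s\geq1$. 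Iterating, the frontier multiplies by a factor $\geq \tfrac14 pn\geq \tfrac{c}{4}n^{\rho}$ each round until it first exceeds $\sqrt n$; since $(\tfrac{c}{4}n^{\rho})^{\lceil D/2\rceil}\geq n^{D\rho/2}\geq n^{1/2}$ (as $D\rho>1$), after $\lceil D/2\rceil$ rounds the ball has size $\geq\sqrt n\geq \sqrt n/\log n$, with total failure probability $o(n^{-1})$; a union bound over the two relevant starting vertices costs only a factor $2$. I expect the main obstacle to be the careful bookkeeping in step (a): one must either use the negative-association of the multinomial directly (so that the lower-tail bound on newly discovered vertices still holds) or first couple down to genuine $G(n,p)$, and one must track that the "stopping size" $\sqrt n$ is reached in exactly $\lceil D/2\rceil$ rounds so that the diameter bound comes out as $\max(3,\lfloor1/\rho\rfloor+1)$ rather than something weaker; the edge-case analysis for small $D$ and for bounded $n$ also needs to be done by hand rather than through the generic argument.
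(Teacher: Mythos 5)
Your reduction from the multinomial model to an Erd\H{o}s--R\'enyi comparison via negative association of the multinomial coordinates is sound, and is in fact more self-contained than the paper's route, which bounds the diameter by that of the uniform random graph $G(n,m)$ and then invokes its asymptotic equivalence with $G(n,p)$, $p=\frac12(1-(1-r)^t)$, citing \cite{dommers_diameters_2010}. The genuine gap is in your final step. Writing $D=\lfloor 1/\rho\rfloor+1$, the lemma asserts a diameter bound of $\max(3,D)$, not $\max(3,D+1)$ as you state. Your symmetric argument --- grow balls of radius $\lceil D/2\rceil$ around both $u$ and $w$ up to size $\sqrt n/\log n$ and join them by a single edge --- only yields $d(u,w)\le 2\lceil D/2\rceil+1$, which equals $D+1$ for even $D$ and $D+2$ for odd $D$; neither is $\le\max(3,D)$ once $D\ge 4$ (resp.\ $D\ge 3$), and even the weaker inequality you assert, $2\lceil D/2\rceil+1\le\max(3,D+1)$, fails for odd $D\ge 3$ (take $D=3$: you get $5>4$). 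So as written you prove a strictly weaker statement than the lemma.

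The repair is to make the two balls asymmetric: grow them for $a=\lceil (D-1)/2\rceil$ and $b=\lfloor (D-1)/2\rfloor$ rounds, so that $a+b+1=D$, and stop at sizes of order $(pn)^a$ and $(pn)^b$ rather than $\sqrt n$. The probability that no edge joins the two balls is then at most $\exp(-p\,(pn)^a(pn)^b)=\exp(-p^{D}n^{D-1})$, and $p^{D}n^{D-1}\ge c^{D}n^{D\rho-1}\to\infty$ polynomially since $D\rho>1$, which comfortably beats the union bound over all $\binom n2$ pairs. This asymmetric count is precisely what conditions \eqref{dia1}--\eqref{dia2} of the classical dense-diameter theorem encode, so the cleanest fix is to keep your negative-association coupling and then quote \cite[Corollary 10.12]{bollobs_random_2001} as the paper does. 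Two secondary points: your treatment of bounded $\#\mathcal V$ is incorrect --- $rt\ge c$ together with $t\to\infty$ only forces $p\ge 1-\e^{-c}$, not $p\to1$, so the graph need not be complete; the lemma should be read as an $\#\mathcal V\to\infty$ statement, as in the application where $\#\mathcal V=\#c_N\to\infty$. And the union bound in your step (a) must run over all $n$ starting vertices, not just two, which your $o(n^{-1})$ per-vertex failure probability does support.
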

\begin{proof}
The detailed argument is given in Lemma A.2 and Proposition 3.2 of \cite{dommers_diameters_2010}. We give a brief outline here: First one shows that the diameter of $(\mathcal{V},\mathcal{A})$ is bounded by the diameter of the \emph{uniform random graph} $\Gg_{u}(\#\mathcal{V},m)$ with $\#\mathcal{V}$ vertices and $m=m(t)$ edges, where $$m(t)=\Big\lceil\frac{\#\mathcal{V}(\#\mathcal{V}-1)}{4}\big(1-(1-r)^t\big)\Big\rceil.$$ The graph $\Gg_{u}(\#\mathcal{V},m)$ is in turn asymptotically equivalent to the classical \emph{Erd\H{o}s-R\'{e}nyi graph} $\Gg(\#\mathcal{V},p)$ on $\#\mathcal{V}$ vertices with edge probability $p=p(t)$ given by
\begin{equation}
\label{eq:ERprobapprox}
p(t)=\frac12 \big(1-(1-r)^t\big).
\end{equation}
The well known diameter result for dense $\Gg(n,p)$, see e.g. \cite[Corollary 10.12]{bollobs_random_2001}, states that $\Gg(n,p)$ has with high probability diameter $d \geq 3$ if 
\begin{equation}\label{dia0}
\lim_{n\to \infty}\frac{\log n}{d}-3\log\log n=\infty,
\end{equation}
\begin{equation}\label{dia1}
\lim_{n\to \infty}p^dn^{d-1}-2\log n=\infty,
\end{equation}
and
\begin{equation}\label{dia2}
\lim_{n\to \infty}p^{d-1}n^{d-2}-2\log n=-\infty.
\end{equation}
Finally, by assumption, for $n=\#\mathcal{V}$, we have $rt\geq n^{\rho-1}$ and therefore by \eqref{eq:ERprobapprox} $$p= c(1+o(1)) n^{\rho-1},$$ for some constant $c$. We may assume $\rho\leq 1/2$, since clearly decreasing $\rho$ only increases the diameter. Setting $d=\lfloor1/\rho\rfloor+1$ it is obvious, that \eqref{dia0} holds and furthermore $$p^dn^{d-1}\approx n^{d\rho-1}=n^{\lfloor 1/\rho \rfloor\rho -1 +\rho}\quad{\textrm{ and }}\quad p^{d-1}n^{d-2}\approx n^{(d-1)\rho -1}=n^{\lfloor 1/\rho \rfloor\rho -1 },$$ implying that \eqref{dia1} and \eqref{dia2} are satisfied as well.
\end{proof}
\begin{proof}[Proof of {\cref{prop:cores}}]
	We use a coupling of $C_N\subset\Gg_N$ and a multinomial random graph to show that the diameter of $C_N$ is small. Recall that the preferential attachment model is uniquely specified by the degree evolutions which can be constructed as follows. Take a family of independent Uniform$[0,1]$ random variables 	$(U(v,n): v,n\in \mathbb N \text{ with } v<n)$ and define iteratively
	\begin{equation}\label{COUPLUZ}
\Zz[v,n]=0 \text{ and } \Zz[v,n] =\Zz[v,n-1]+\1 \Bigl\{U(v,n)\leq \frac{f(\Zz[v,n-1])}{n-1}\Bigr\},\text{ \ for }n=v+1,\dots
	\end{equation} 
	Let  $N\in\N$ and  $c_N\subset [N_\eps]$ such that
	\begin{equation}
	\label{CNspecs}
	\#c_N\to\infty \text{ and } \log(\#c_N)=o(\log N)
	\end{equation} and construct for each $n\in[N]\backslash[N_\eps]$ a multinomial random graph $(c_N,\mathcal A_n)$ with iteration number one by the rule that for distinct vertices $v,v'\in c_N$ the edge $( v,v')$ is present if and only if 
	\begin{equation}
	\label{edgepresent}   \{v,v'\} =\{  w  \in c_N :  U(w,n)\leq \EE f(\Zz[M_N,N_\eps])/(2N)\},
	\end{equation}
    thus the success probability equals
\begin{equation}
	\label{eq:pairconnectionprob}
	r(N):=\Big(\frac{\EE f(\Zz[M_N,N_\eps])}{2N}\Big)^2\Big(1-\frac{\EE f(\Zz[M_N,N_\eps])}{2N}\Big)^{\#c_N-2}.
	\end{equation}
	Clearly, $r(N)\in(0,1)$ if $N$ is sufficiently large by \eqref{CNspecs}. Note that the random graphs $(c_N,\mathcal A_{[N_\eps]+1}),\dots, (c_N,\mathcal A_{[N]})$ are independent and the sum of the latter graphs, say $(c_N,\mathcal A^{N})$, is a binomial random graph with iteration number $N-N_\eps$ and success probability $r(N)$. Furthermore, by \eqref{edgepresent} and \eqref{COUPLUZ}, for any $v,w\in c_N$ with $f(\Zz[v,N_\eps])\geq \EE f(\Zz[M_N,N_\eps])$ and $f(\Zz[w,N_\eps])\geq \EE f(\Zz[M_N,N_\eps])$ the existence of the edge $(v,w)$ in the multinomial graph $(c_N,\mathcal A_n)$ $(n=[N_\eps]+1,\dots,[N])$ implies the existence of edges $(v,n)$ and $(w,n)$ in the graph~$\Gg_N$. Thus the diameter of $c_N$ in $\Gg_N$ is less than twice the diameter of the multinomial random graph $(c_N,\mathcal A^N)$.
	\smallskip
	
	Next we show that for a sequence of sets $c_N\subset [N_\eps]$ satisfying $\delta M_N\leq \#c_N\leq M_N$, for some $\delta>0$, the random graphs $(c_N,\mathcal A^N)$ satisfy the assumptions of \cref{lem:multinomial}. By \cref{prop:expbounds1}, for some $C>0$, 
	\begin{align*}
	&\Big(1-\frac{\EE f(\Zz[M_N,N_\eps])}{2N}\Big)^{\#c_N-2}\geq \Big(1-\frac{C(\log N)^\alpha}{\sqrt{N}}\Big)^{\#c_N}\\
	&=\,1-\frac{C\#c_N(\log N)^\alpha}{\sqrt{N}}+O\big((\#c_N^2(\log N)^{2\alpha} N^{-1})\big),
	\end{align*}
	which converges to one as $\log(\#c_N) = o(\log N)$. Hence we obtain, using again \cref{prop:expbounds1}, that $$r(N)\geq  d\frac{(\log\sfrac{N_\eps}{M_N})^{2\alpha}\frac{N_\eps}{M_N}}{N^2}\geq d_\eps \frac{(\log N)^{2\alpha}}{N \#c_N},$$ for some suitably chosen constants $d,d_\eps>0.$ It now follows from $N-N_\eps\geq N \eps/2$, that $$r(N)(N-N_\eps)\geq \frac{\eps}{2} d_\eps \frac{(\log N)^{2\alpha}}{\#c_N}=\frac{\eps}{2} d_\eps {(\#c_N)}^{2\alpha\log\log N/\log \#c_N-1}.$$ Using that $\log \#c_N= R\log\log N+O(1)$, by choice of $M_N$ and $c_N$, we thus may apply \cref{lem:multinomial} for any $\rho<2\alpha/R,$ which yields a diameter bound of $\max(3,\lfloor R/(2\alpha)\rfloor+1)$ on $(c_N,\mathcal{A}^N)$ with high probability as $N\to\infty.$
	\smallskip
	
	Finally, note that $\{C_N=c_N\}$ and the random variables $\{U(v,n): v\in c_N, n\in[N]\backslash[N_\eps]\}$ are independent. Hence the event $\{C_N=c_N\}$ is independent of the realisation of $(c_N,\mathcal A^{N})$. By \cref{lem:sizelemma}, the conclusion of the last paragraph may thus be applied to $(C_N,\mathcal{A}^N)$ outside a set of vanishing probability and recalling that one edge in $(C_N,\mathcal{A})$ corresponds to two edges in $\Gg_N$ now yields the bound claimed in the proposition with high probability as $N\to\infty$.\end{proof}

\subsection{Proof of \cref{thm:PA}}
It remains to prove the upper bound by combining the results about the first two phases of the explorations of \emph{two} independently chosen vertices, and join the connected components uncovered during these explorations to $C_N$. 

\begin{proof}[Proof of \cref{thm:PA}]
We start local explorations in the uniformly chosen vertices $U,V$ from the largest connected component $\mathcal{C}_N\subset\Gg_N$. Let $\eps\in (0,1/3)$ be fixed. Since $\#(\mathcal{C}_N \setminus \mathcal{C}_{N_\eps})\leq \eps N$ for $N_\eps= \lceil (1+\eE)^{-1} N\rceil$ we have $U,V\in \Gg_{N_\eps}$ with probability exceeding $1-2\eps.$ We consider two exploration processes around $U$ and $V$, respectively, in $\Gg_{N_\eps}.$\smallskip

By \cref{prop:localknown} there exists $k_0(\varepsilon)$ such that with probability exceeding $1-\eE/4$, in both explorations we reach after at most $k\leq k_0(\varepsilon)$ exploration steps active sets $A\subset \mathcal{E}_k$ satisfying  $\xi(A)\geq s_0\xi(\min A)$ with $s_0=s_0({\varepsilon}/{8})$, as defined in \cref{prop:scoregrowth}. Now we start the main phase of the two explorations with initial configurations in which the sets $A$ represent the active vertices, and possible other active vertices are veiled and connecting edges removed. Observe that this modification can only increase the observed distance between $U$ and $V$.\smallskip

We denote the explored parts of the network at this stage by $\mathcal{E}_0^{\ssup{1}},\mathcal{E}_0^{\ssup{2}}$ and henceforth only look at the scores of the two explorations. To keep the explorations sufficiently independent, we slightly modify the algorithm: The exploration process around $U$ inspects for any active vertex $v$ only connections to $w>v$, if $w\in[N]$ is \emph{odd}. Similarly, the exploration around $V$ only checks an active vertex $v$ for connections to $w>v$ if $w\in[N]\}$ is an \emph{even} vertex. It is easily seen that this only changes the constant in the lower bound of \cref{lem:tediousmain}.\smallskip

We know by \cref{prop:scoregrowth} that if $N_\eps$ is sufficiently large, then for each exploration viewed on its own, with probability exceeding $1-\eE/4$, after 
$$K_0^{\ssup i}\leq \;\Big(\frac{1}{2\alpha+2}+\frac{\eta}{2}\Big) \frac{\log N}{\log\log N}$$
steps and any choice of $\eta>0$, the conclusion of \cref{prop:scoregrowth} is applicable. We call such an exploration \emph{successful}. In the step when the score bound in \cref{prop:scoregrowth} is reached we have
\begin{equation}\label{eq:scorebound2}
H_K^{\ssup{i}}\geq \frac{\sqrt{N_\eps}}{(\log N_\eps)^{1+\alpha}}, 
\end{equation}
where $$H_K^{\ssup{i}}:=\xi\Big(\mathrm{active}\big(\mathcal{E}_{^{K_0^{\ssup i}}}^{\ssup{i}}\big)\cup\mathrm{dead}\big(\mathcal{E}_{^{K_0^{\ssup i}}}^{\ssup{i}}\big),N_\eps\Big).$$ 
Also note for later reference that by the definition \eqref{def:ell} of $(\ell_k)_{k\geq 1}$ and the recursion for $(S_k)$, cf. \eqref{eq:S_vs_ell}, 
\begin{equation}\label{eq:scorebound3}
H_{K}^{\ssup{i}}\geq d\,\frac{\log N}{\log\log N}\,\xi(\ell_{K_0},N_\eps),
\end{equation}
for some small $d>0.$
\smallskip

We may assume without loss of generality that $K_0^{\ssup 1}<K_0^{\ssup 2}$. After stage $K_0^{\ssup 1}$, we cannot apply exactly the same reasoning for the second exploration as in \cref{prop:scoregrowth}, since the total score of both configurations combined is too high. However, the lower bound given in \cref{lem:lbCond} can still be applied in each exploration step, since the set $I_0$ of non-jump times featured in this lemma consists only of odd vertices and is therefore disjoint of the sets of non-jump times used in the other exploration which may have exceeded the score bounds. The restriction on the set of jump-times $I_1$ clearly plays no role -- if we encounter an additional jump due to a connection to the first exploration, then the procedure can be stopped and a shortest path connecting $U$ and $V$ is found.\smallskip

As a consequence, we deduce that with high probability, $U$ and $V$ are either found to be connected before stage $K_0^{\ssup 2}$ or their respective explorations have reached a score of at least $\sqrt{N_\eps}(\log N_\eps)^{-(\alpha+1)}.$ Note that for a successful exploration, by definition of $K_0$, $$ \sqrt{\frac{N_\eps}{(\log N_\eps)^{\alpha+2}}}\geq S_{K_0-1}$$ and furthermore \eqref{def:ell} and \eqref{eq:S_vs_ell} imply that $$S_{K_0-1}\geq \sqrt{\frac{N_\eps}{\ell_{K_0}}}.$$ Combining these estimates it follows that $\ell_{K_0}>(\log N_\eps)^{2\alpha+2}$ and the exploration has thus collected no information about the degree evolutions of vertices in $[M_N]$ during its main phase, where $M_N=\lfloor c_{\eps}(\log N)^{2\alpha+2}\rfloor$, and $c_\eps>0$ is some suitably chosen constant. Therefore we can apply \cref{lem:sizelemma,prop:cores} to deduce that, for sufficiently large $N$, with probability exceeding $1-\eE/4$, the subgraph induced by $C_N\subset\Gg_N$ is of bounded diameter $D$ and contains at least $r M_N$ vertices, for some $r=r(\varepsilon)>0$. \smallskip

Denoting the sets of active and dead vertices of \smash{$\mathcal{E}^{\ssup{i}}_{^{K_0^{(i)}}}$} by $V(i)$, and using the shorthand $$\{V(i)\con{\eps} C_N\}:=\{\exists\; n\in [N]\setminus[N_\eps], v\in V(i) , w\in C_N: n\to v, n\to w\},\quad i=1,2,$$ it remains to show that $$\PP(V(1)\con{\eps} C_N,\,V(2)\con{\eps} C_N)\geq 1-\eE/4,$$ if $N$ is sufficiently large. Conditional on $\Gg_{N_\eps}$, let $$L=\{j\in \{N_\eps+1,\dots,N\}:\; \exists\, v \in C_N \textrm{ with } j \to v\}.$$
We have already established that, with high probability, $C_N$ contains at least $rM_N$ vertices. It is now straightforward to deduce via an appropriate coupling to Bernoulli random variables that \begin{equation}
\label{LBOUND} \#L\geq  qM_N\psi(M_N,N_\eps)\xi(M_N,N_\eps)
\end{equation} 
with probability at least $1-{\varepsilon}/{12}$, where $q=q(\varepsilon)>0$ is some small constant. Each $j\in L$ has an independent probability of at least $f(\Zz[v,N_\eps])/N$ to connect to $v\in V(i)$, thus the probability that it does not connect to any $v\in V(i)$ is bounded above by {$\exp\big(-N^{-1}\sum_{v\in V(i)}f(\Zz[v,N_\eps])\big)$}. Since this holds independently for all $j\in L$, we obtain by \eqref{eq:scorebound2} and \eqref{LBOUND}, recalling that $\psi(M_N,N_\eps)\xi(M_N,N_\eps)\approx \log(N/M_N)^{\alpha}\sqrt{N/M_N}$, \begin{equation}\label{eq:condfinal}
\begin{aligned}
&\1\{\#L\geq  qM_N\psi(M_N,N_\eps)\xi(M_N,M_\eps) \} \\
&\times\1\big\{\textstyle\sum_{v\in V(i)}f(\Zz[v,N_\eps])\geq \nu \xi(V(i),N_\eps)\big\}\PP\big(\{V(i)\con{\eE}C_N \}^{c} \, \big| \, \Gg_{N_\eps}\big)\\
&\leq\;\exp\Big(-\frac{\#L\sum_{v\in V(i)}f(\Zz[v,N_\eps])}{N}\Big) \leq\; \exp\Big(-\frac{\#L \nu}{N} H_{K}^{\ssup{i}}\Big)\\
&\leq\; \exp\Big(-\frac{\nu (qM_N\psi(M_N,N_\eps)\xi(M_N,N_\eps)-1)\sqrt{N_\eps}}{ N \log N^{\alpha+1}}\Big) \leq\; \frac{\varepsilon}{24},
\end{aligned}
\end{equation}
for all sufficiently large $N$ and some small $\nu\in(0,1)$ to be fixed below. Note that the term in the last exponential is bounded below by a constant (depending only on $\eps$) multiple of $(\log(N/M_N))^{\alpha}$.\smallskip

It remains to fix $\nu>0$ and bound $$\PP\Big(\sum_{v\in V(i)}f(\Zz[v,N_\eps])< \nu \xi(V(i),N_\eps)\Big)=\PP\Big(\sum_{v\in V(i)}f(\Zz[v,N_\eps])< \nu H_{K}^{\ssup{i}}\Big).$$ The proof of \cref{prop:scoregrowth} shows that $(S_k)_{k=1}^{K_0}$ grows superexponentially, thus for every $\mu>0$, there is $\nu>0$ such that $$\sum_{v\in \mathrm{active}(\mathcal{E}_{K_0})}f(\Zz[v,N_\eps])\geq \mu S_{K_0}\Rightarrow \sum_{v\in V(i)}f(\Zz[v,N_\eps])\geq \nu H_{K_0},$$ i.e. $H_{K_0}$ can differ from $S_{K_0}$ by at most a constant factor. Therefore it is sufficient to find a lower bound on $S_{K_0}$. Note that, for $v\in \mathrm{active}(\mathcal{E}_{K_0})$, replacing the attachment rule $f$ by the linearised attachment rule $\bar{f}(k)=f(0)+{k}/{2}$ does not change the values $\xi(v,N_\eps)$ and only diminishes the sum on the left. For the rest of the argument we may therefore assume that $f=\bar{f}$ in the evolutions $\{\Zz[v,\cdot],v\in \mathrm{active}(\mathcal{E}_{K_0})\}$. During the final exploration stage $K_0$, the evolution $\Zz[v,i]_{i=1}^{N_\eps}$ of an active vertex $v$ is only conditioned on a set $I_0$ of non-jumps which still fullfills the conditions of \cref{lem:lbCond}. This implies that, for some small $s>0$, we have
$\EE[f(\Zz[v,N_\eps])|\mathcal{E}_{K_0}]\geq s\xi(v,N_\eps),$ and thus $$\EE\Big[\sum_{v\in \mathrm{active}(\mathcal{E}_{K_0})}f(\Zz[v,N_\eps])\Big|\mathcal{E}_{K_0}\Big]\geq s S_{K_0}.$$ The random variables under summation on the left are independent. Choosing $\mu=\mu(s)$ small enough we thus find, by \cref{lem:CL27},
$$\PP\Big(\sum_{v\in \mathrm{active}(\mathcal{E}_{K_0})}f(\Zz[v,N_\eps])<\mu S_{K_0}\, \Big|\,\mathcal{E}_{K_0}\Big) \leq\exp\Big({-\delta\frac{S^2_{K_0}}{\sum_{v\in \mathrm{active}(\mathcal{E}_{K_0})} \EE[ f(\Zz[v,N_\eps])^2|\mathcal{E}_{K_0}]} \Big)},$$
for some $\delta=\delta(\mu)>0$. Taking into account the linearisation of $f$, and \cref{prop:expbounds1}, we obtain $\EE[ f(\Zz[v,N_\eps])^2|\mathcal{E}_{K_0}]\leq C_{\ref{prop:expbounds1}}\EE f(\Zz[v,N_\eps])^2\leq C\xi(v,N_\eps)^2,$ for some constant $C>0.$ Hence $$\sum_{v\in \mathrm{active}(\mathcal{E}_{K_0})} 
\EE\big[ f(\Zz[v,N_\eps])^2\,\big|\,\mathcal{E}_{K_0} \big] \leq C \xi(\ell_{K_0},N_\eps)S_{K_0},$$ using $\sum_{i}x_i^2\leq \max |x_i|\sum_{i}x_i$ and that the maximum is attained at $\ell_{K_0}$ due to the restriction of the exploration. Therefore $$\PP\Big(\sum_{v\in \mathrm{active}(\mathcal{E}_{K_0})}f(\Zz[v,N_\eps])<\mu S_{K_0}\Big|\mathcal{E}_{K_0}\Big)=O\big(\e^{-{\log N}/{\log\log N}}\big),$$ by \eqref{eq:scorebound3} and the fact that $H_{K_0}$ is a bounded multiple of $S_{K_0}.$ Taking expectations and using the already established lower bound on the probability of a successful exploration yields the desired bound of 
$\PP(\sum_{v\in V(i)}f(\Zz[v,N_{\eps}])< \nu \xi(V(i),N_\eps))\leq {\varepsilon}/{24},$ for sufficiently large $N$.
\smallskip

Combining the distance bounds from all exploration phases and summing up all error probabilities we thus have shown that for any $\eps\in(0,1/3)$ with probability exceeding $1-3\varepsilon$, 
$$d_{N}(U,V)\leq D+2+\Big(\frac{1}{1+\alpha}+\eta\Big)\frac{\log N}{\log \log N}+2k_0(\varepsilon),$$ for all sufficiently large $N$. This concludes the proof as $\eta>0$ was arbitrary.
\end{proof}

\section{Proof of \cref{thm:CM}}
In this section we use a similar method as in the previous sections to describe the average distances in the Norros-Reittu model with i.i.d. random weights, and thus prove~\cref{thm:CM}.  The technical details are considerably easier in this case, and some parts of the proof which proceed in direct analogy to the preferential attachment case will only be sketched.
\smallskip

We first state some well known facts about heavy tailed i.i.d.\ weight sequences.
\begin{prop}[Asymptotics of weights]\label[prop]{prop:weightas}
Let $(W_i)_{i\geq 1}$ be an i.i.d. sequence satisfying \begin{equation}\label{TAILS}
\PP( W_1\geq k) =k^{-2}(\log k)^{2\alpha+o(1)},\end{equation} and denote by $F_n$ the distribution function of the $n$-th power $W_1^n$ of the weights.  For every $\varepsilon\in(0,1)$ there is a subset $\Omega_{\varepsilon}$ of the space of all infinite weight sequences with $\PP(\Omega_{\varepsilon})>1-\varepsilon$ and positive constants $C_1,C_2,C_3$ and $c_2$ 
such that on $\Omega_{\varepsilon}$ the following conditions are satisfied
\begin{equation}\label{eq:maxbound}
\max_{1\leq i\leq N}W_i\leq C_1 \big(\sfrac{1}{1-F_1}\big)^{-1}(N),
\end{equation}
\begin{equation}\label{eq:sumbound2}
c_2 \leq \frac{\sum_{i=1}^N W_i^2 - J(N)}{\big(\sfrac{1}{1-F_2}\big)^{-1}(N)} \leq C_2,
\end{equation}
\begin{equation}\label{eq:sumbound3}
\sum_{i=1}^N W_i^3 \leq C_3 \big(\sfrac{1}{1-F_3}\big)^{-1}(N),
\end{equation}
where the generalised inverse of a monotone function is chosen to be left-continuous and
$$J(N):=N\EE [W^2_1 \1\{W^2_1\leq \big(\sfrac{1}{1-F_2}\big)^{-1}(N)\}].$$
\end{prop}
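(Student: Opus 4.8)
The plan is to establish each of the three estimates separately, each holding on a subset of weight-sequence space whose complement has probability at most $\varepsilon/3$, and then intersect. Throughout write $a_j(N):=\big(\sfrac{1}{1-F_j}\big)^{-1}(N)$ for the generalised inverse appearing in the statement, so $a_1(N)=N^{1/2}(\log N)^{\alpha+o(1)}$, $a_2(N)=N(\log N)^{2\alpha+o(1)}$ and $a_3(N)=N^{3/2}(\log N)^{3\alpha+o(1)}$ by the regular-variation-type tail assumption~\eqref{TAILS} together with the elementary identity $\PP(W_1^j\geq x)=\PP(W_1\geq x^{1/j})$.

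First I would treat~\eqref{eq:maxbound}. Since $\PP(\max_{i\leq N}W_i> t)\leq N\,\PP(W_1>t)$, plugging $t=C_1 a_1(N)$ and using that $x\mapsto 1-F_1(x)$ is (eventually) decreasing and that $(1-F_1)\big(a_1(N)\big)\approx 1/N$ shows $N\,\PP(W_1>C_1 a_1(N))\to 0$ provided $C_1$ is taken large enough, because the tail $1-F_1$ is regularly varying of index $-2$ up to the slowly varying logarithmic correction, so $(1-F_1)(C_1 a_1(N))\leq \tfrac12 C_1^{-2}(1-F_1)(a_1(N))$ for large $N$. This gives a single event of probability $\to 1$, so certainly probability $>1-\varepsilon/3$ for $N$ beyond some threshold; one then enlarges the exceptional set to cover small $N$ as well, or simply notes the statement is for the whole sequence so one may pass to a tail event. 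For~\eqref{eq:sumbound3} the argument is a first-moment (Markov) bound on $\sum_{i=1}^N W_i^3$: on the event $\{\max_i W_i\leq C_1 a_1(N)\}$ already secured, one has $W_i^3\leq C_1^3 a_1(N)^3$ deterministically, but more efficiently one truncates at level $a_3(N)$ and writes $\EE[W_1^3\1\{W_1^3\leq a_3(N)\}]=\int_0^{a_3(N)}\PP(W_1^3>t)\,dt$, which by the index $-2/3$ regular variation of $\PP(W_1^3>\cdot)$ is $\asymp a_3(N)\,\PP(W_1^3>a_3(N))\cdot(\text{const})\asymp a_3(N)/N$; hence $N$ times this truncated mean is $\asymp a_3(N)$, and Markov's inequality controls the sum with the truncated summands, while the contribution of the at most $o(1)$-probability event that some $W_i^3$ exceeds $a_3(N)$ is absorbed into the exceptional set. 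One has to check the integral $\int_0^{a_3(N)}\PP(W_1^3>t)\,dt\asymp a_3(N)/N$ carefully since the exponent $-2/3>-1$ makes the integral dominated by its upper end, which is exactly the regime where Karamata's theorem applies.

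The estimate~\eqref{eq:sumbound2} is the genuine two-sided one and the main obstacle, because $\EE[W_1^2]=\infty$ (the tail index is exactly $-1$ for $W_1^2$, up to logs), so the sum $\sum_{i=1}^N W_i^2$ is not concentrated around a finite mean and the centring constant $J(N)$ — the truncated mean at level $a_2(N)$ — is essential. The plan is: (a) split $W_i^2=W_i^2\1\{W_i^2\leq a_2(N)\}+W_i^2\1\{W_i^2> a_2(N)\}$; (b) for the truncated part, use a second-moment / variance estimate (or Chebyshev together with Karamata, noting $\EE[W_1^4\1\{W_1^2\leq a_2(N)\}]\asymp a_2(N)\EE[W_1^2\1\{W_1^2\le a_2(N)\}]\asymp a_2(N)^2/N$, so the variance of the truncated sum is $\asymp a_2(N)^2$, matching the scale) to show that $\sum_i W_i^2\1\{W_i^2\le a_2(N)\}=J(N)+O(a_2(N))$ with probability $\ge 1-\varepsilon/9$, which pins down both the upper and the lower bound for this part with two explicit constants; (c) for the exceptional part, the number of indices with $W_i^2>a_2(N)$ is Binomial$(N,1/N(\text{const}))$, hence $O(1)$ in probability, and each such term is at most $C_1^2 a_1(N)^2 = o(a_2(N))$ on the already-secured max event — so the exceptional part is $o(a_2(N))$ and in particular nonnegative, which is what makes the \emph{lower} constant $c_2$ attainable (indeed $J(N)\asymp a_2(N)$ by Karamata, so $J(N)+(\text{nonneg})\ge c_2 a_2(N)$). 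Collecting the three sub-events and their $\varepsilon/3$-complements, intersecting, and choosing the constants $C_1,C_2,C_3,c_2$ as produced above completes the proof. The delicate point throughout is the repeated use of Karamata's theorem for integrals $\int_0^{y}\PP(W_1^j>t)\,dt$ when $W_1^j$ has tail index in $(-1,0)$, i.e. the divergent-mean regime, to get the truncated moments on exactly the right scale; this, rather than any probabilistic subtlety, is where the work lies.
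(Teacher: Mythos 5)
Your proposal takes a genuinely different, more elementary route than the paper. The paper's own proof is two sentences: it invokes the Fr\'{e}chet limit law for the rescaled maximum to get \eqref{eq:maxbound}, and stable limit theorems for the partial sums (citing Resnick) to get \eqref{eq:sumbound2} and \eqref{eq:sumbound3}. You instead build everything from union bounds, truncated-moment computations via Karamata's theorem, and Markov/Chebyshev. Your treatment of \eqref{eq:maxbound}, of \eqref{eq:sumbound3}, and of the upper bound in \eqref{eq:sumbound2} is sound, and the key Karamata computation $\EE\big[W_1^4\1\{W_1^2\le a_2(N)\}\big]\asymp a_2(N)^2/N$ driving the Chebyshev step is exactly right; the self-contained route is arguably more illuminating than the paper's reference-dropping, since it makes visible that the tail indices of $W_1$, $W_1^2$, $W_1^3$ are $-2$, $-1$, $-2/3$, straddling the critical value $-1$ in a way that governs which parts need centring.

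There is, however, a genuine gap in your argument for the positive lower bound $c_2$ in \eqref{eq:sumbound2}, and the parenthetical claim \emph{``indeed $J(N)\asymp a_2(N)$ by Karamata''} is false. The Karamata asymptotic $\int_0^y\PP(X>t)\,dt\asymp y\,\PP(X>y)$ requires the tail index to lie strictly in $(-1,0)$; this is why it works for $W_1^3$ (index $-2/3$), but $\PP(W_1^2>t)\sim t^{-1}(\log t)^{2\alpha+o(1)}$ has index exactly $-1$, and in that borderline case the truncated mean picks up an extra logarithm: $\EE\big[W_1^2\1\{W_1^2\le y\}\big]\asymp(\log y)^{2\alpha+1}$, so
$$J(N)\;\asymp\;N(\log N)^{2\alpha+1}\;\asymp\;a_2(N)\log N\;\gg\;a_2(N).$$
This undermines the logic of your lower bound: you are in effect bounding $\sum_i W_i^2$ from below, whereas what is required is a lower bound on the \emph{centred} quantity $\sum_iW_i^2-J(N)$. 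Chebyshev applied to the truncated sum only yields $\sum_iW_i^2\1\{W_i^2\le a_2(N)\}-J(N)\ge -K\,a_2(N)$, a negative lower bound, and the exceptional part $\sum_iW_i^2\1\{W_i^2>a_2(N)\}$ cannot rescue you: the number of exceptional indices is approximately Poisson of constant mean, hence equal to zero with probability bounded away from zero, and on that event the centred sum can be of order $-a_2(N)$. (You should note, too, that this delicacy is not an artefact of your approach: the mechanism the paper appeals to, stable limits with ``positive support,'' applies cleanly only to $W_1^3$, since for $W_1^2$ the limit is a two-sided spectrally positive $1$-stable law. If you wish to complete the argument you must say precisely what lower bound you can actually prove for the centred sum and check that it suffices downstream.)
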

\begin{proof}
Inequality \eqref{eq:maxbound} is a direct consequence of the weak convergence of the rescaled maximum weight to the Fr\'echet distribution (see e.g. \cite[Chapter I]{Re87}). The relations \eqref{eq:sumbound2} and \eqref{eq:sumbound3} follow from weak convergence of rescaled partial sums to stable random variables with positive support (see e.g. \cite[Corollary 7.1]{Re07} for a stronger functional version).
\end{proof}
\subsection{Proof of the lower bound}
It is now straightforward to deduce a first moment upper bound on the probability of existence of short paths in $\mathcal{H}_N.$
\begin{prop}[Lower bounds on distances in NR]\label[prop]{prop:CMLB}
Let $\mathcal{H}_N$ denote a Norros-Reittu network with weight distribution satisfying \eqref{eq:CMcondition}, then for every $\delta\in(0,({1+2\alpha})^{-1})$ and independently and uniformly chosen vertices $U,V\in\mathcal{H}_N$,
$$d_N(U,V)\geq\Big(\frac{1}{1+2\alpha}-\delta\Big)\frac{\log N}{\log\log N}\ \ \textrm{ with high probability as }N\to\infty.$$
\end{prop}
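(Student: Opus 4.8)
The plan is to imitate the first-moment path-counting argument behind \cref{prop:lowerbounds}, adapting it to the i.i.d.-weight structure of the Norros--Reittu graph. The essential simplification over the preferential attachment case is that, conditionally on the weight sequence $W=(W_i)$, the edges of $\mathcal H_N$ are independent, so no correlation correction is needed. Fix $\varepsilon\in(0,1)$ and a small $\eta>0$, and condition on the event $\Omega^*$ obtained by intersecting the event $\Omega_\varepsilon$ of \cref{prop:weightas} with $\{cN\le L_N\le CN\}$; the latter has probability $1-o(1)$ because \eqref{eq:CMcondition} gives $\EE W_1\in(0,\infty)$. The only quantitative input we need is that, on $\Omega^*$ and for all large $N$,
$$\sum_{i=1}^N W_i^2\;\le\;N\,(\log N)^{2\alpha+1+\eta}.$$
To obtain this, note that \eqref{eq:CMcondition} gives $\PP(W_1^2>x)=x^{-1}(\log x)^{2\alpha+o(1)}$, hence $b_N:=(1/(1-F_2))^{-1}(N)=N(\log N)^{2\alpha+o(1)}$ and, computing the truncated second moment, $J(N)=N\,\EE[W_1^2\1\{W_1^2\le b_N\}]=N(\log N)^{2\alpha+1+o(1)}$; since the fluctuation term controlled by \eqref{eq:sumbound2} is only of order $b_N=o(N(\log N)^{2\alpha+1})$, the bound follows. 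I expect this to be the one step requiring genuine care: $W_1^2$ has tail index one, so its partial sums do \emph{not} concentrate around a deterministic mean (that mean is infinite), and a naive law of large numbers is unavailable -- one must pass through the truncation in \cref{prop:weightas}.

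With this in hand the path count is routine. Using $1-\e^{-x}\le x$ in \eqref{eq:NRprob} together with conditional independence of edges, for a self-avoiding path $P=(v_0,\dots,v_l)$ in $[N]$ we have
$$\PP\big(P\subset\mathcal H_N\,\big|\,W\big)\;\le\;\prod_{j=0}^{l-1}\frac{W_{v_j}W_{v_{j+1}}}{L_N}\;=\;\frac{W_{v_0}W_{v_l}}{L_N^{\,l}}\prod_{j=1}^{l-1}W_{v_j}^2 .$$
Summing over the set $\mathcal P_l(v,w)$ of self-avoiding paths of length $l$ from $v$ to $w$ and then over $1\le l\le L$, on $\Omega^*$,
$$\PP\big(d_N(v,w)\le L\,\big|\,W\big)\;\le\;\frac{W_vW_w}{cN}\sum_{l=1}^{L}\Big(\frac{(\log N)^{2\alpha+1+\eta}}{c}\Big)^{l-1},$$
and since the per-step factor $(\log N)^{2\alpha+1+\eta}$ tends to infinity the sum is dominated by its last term, giving a constant $C=C(c,\eta)$ with $\PP(d_N(v,w)\le L\mid W)\le C\,W_vW_w\,N^{-1}(\log N)^{(2\alpha+1+\eta)(L-1)}$.

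Finally one averages over the uniformly chosen vertices $U,V\in[N]$. Using $\PP(U=v)=\PP(V=v)=1/N$ and $\sum_v W_v=L_N\le CN$ on $\Omega^*$,
$$\PP\big(d_N(U,V)\le L\,\big|\,W\big)\;\le\;\frac{C}{N^3}\Big(\sum_{v=1}^N W_v\Big)^2(\log N)^{(2\alpha+1+\eta)(L-1)}\;\le\;\frac{C'}{N}\,(\log N)^{(2\alpha+1+\eta)(L-1)}.$$
Taking $L=L(N)=\big\lceil\big(\tfrac1{1+2\alpha}-\delta\big)\tfrac{\log N}{\log\log N}\big\rceil$, and then $\eta$ small enough in terms of $\delta$, the exponent satisfies $(2\alpha+1+\eta)(L-1)\log\log N\le(1-\delta')\log N$ for some $\delta'=\delta'(\delta)>0$, so the right-hand side is $O(N^{-\delta'+o(1)})=o(1)$. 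Hence $\PP(d_N(U,V)\le L)\le\PP\big((\Omega^*)^{c}\big)+o(1)\le\varepsilon+o(1)$, and letting $\varepsilon\downarrow0$ gives $\PP(d_N(U,V)\le L)\to0$; since $L\ge\big(\tfrac1{1+2\alpha}-\delta\big)\tfrac{\log N}{\log\log N}$ this is the assertion. The parts of this argument that parallel the preferential attachment case -- in particular the dominated-sum step -- are exactly as before, the only new feature being the weight estimate of the first paragraph and the fact that here the correlation constant is trivial.
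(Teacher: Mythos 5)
Your argument is correct, and it reaches the same threshold by the same first-moment philosophy as the paper, but via a genuinely different key input. The paper routes the proof through the general path-counting \cref{lem:1stMomentBound}: it relabels the vertices in decreasing order of weight and spends most of its effort establishing the order-statistics envelope $W^{\ssup v}\leq C(\eps)\sqrt{(N/v)\bar\Psi_N}$ (via a Bernstein bound for the bulk and a Poisson approximation for the top $O(1)$ order statistics), so that hypothesis \eqref{eq:conpb} holds with $\Psi_N=(\log N)^{2\alpha+o(1)}$ and the interior vertices of a path contribute $\big(\sum_j 1/j\big)^{l-1}$. You instead keep the weights themselves and observe that the only aggregate quantity entering the path count is the empirical second moment, bounding $\sum_{i\le N}W_i^2\le N(\log N)^{2\alpha+1+\eta}$ via the truncated-moment estimate \eqref{eq:sumbound2} of \cref{prop:weightas}; this is a strictly weaker (more aggregate) fact than the paper's pointwise envelope, and it is exactly the correct identification of where the tail index one of $W_1^2$ forces care. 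Your route is shorter and avoids both the relabelling and the exclusion of the $\eps N/3$ oldest-labelled vertices, because averaging $W_vW_w$ over uniform $U,V$ gives $L_N^2/N^2=O(1)$ directly; the paper's route buys reusability of \cref{lem:1stMomentBound} across both model classes. One cosmetic point: after summing the geometric series, your prefactor is not a constant $C(c,\eta)$ but carries a factor $c^{-(L-1)}=N^{o(1)}$ from the $L_N\geq cN$ bound applied $l$ times; this is harmless since it is absorbed into the final $N^{-\delta'+o(1)}$, but it should be stated rather than suppressed.
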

\begin{proof}
We use \cref{lem:1stMomentBound} conditionally on the sequence $W_1, W_2, \ldots$ of weights, and
given~$N$ we relabel the vertices of  $\mathcal{H}_N$ in decreasing order of weight and denote by
$W^{\ssup 1}\geq \cdots\geq W^{\ssup N}$ the order statistic of the first $N$ weights.
 It is sufficient to verify the conditions of the lemma,  for any $\varepsilon\in(0,1)$,  on a subset $\Omega_\varepsilon$ of the space of all  weight sequences with $\PP(\Omega_\varepsilon)\geq 1-\varepsilon$. Conditional independence of edges immediately yields \eqref{eq:cor} with $\kappa_N=1$.
Let $F$ be the distribution function of $W_1$. By \eqref{TAILS} we may fix a sequence $(\bar{\Psi}_N)_{N\in\N}$ satisfying
$\bar\Psi_N=(\log N)^{2\alpha+o(1)}$ such that, for any $\delta\in>0$ we have
$$p(v):=1-F\Big(\sqrt{\sfrac{N}{v}\bar\Psi_N}\Big)\leq \delta\frac{v}{N}, \quad \text{for all }v\in[N],$$ if $N$ is sufficiently large.
Denoting $L_N:=\sum_{n=1}^N W_n\sim N \, \EE W_1$, the conditional connection probabilities  satisfy 
\begin{equation} \label{eq:connprobb}\PP(v\leftrightarrow w)\leq\frac{W^{\ssup v}W^{\ssup w}}{L_N}.\end{equation} 
Therefore we may show that \eqref{eq:conpb} is satisfied for $\Psi_N=C(\varepsilon)^2\bar{\Psi}_N$, where $C(\varepsilon)$ is some constant such that \begin{equation} 
W^{\ssup v}\leq C(\eps)\, \sqrt{\frac{N}{v}\bar\Psi_N}\label{eq:degBound}, \ \  \textrm{for all }1\leq v\leq N, \end{equation} 
with probability exceeding $1-\varepsilon$. To demonstrate this, let $S^{\ssup{v}}_{N}$ the number of weights 
$W_1, \ldots, W_N$ exceeding \smash{$\sqrt{{(N}/{v})\bar\Psi_N}.$} 
The random variable~$S^{\ssup{v}}_{N}$ is dominated by a binomial random variable with parameters $N$ and $p(v)$,
hence  Bernstein's inequality gives, for fixed $\delta<1$,
 $$\PP(S^{\ssup{v}}_{N}>2v)\leq\exp\Big(-\frac{v^2}{2\textrm{Var}S^{\ssup{v}}_{N}+\frac{2}{3}v}\Big)\leq \e^{-\frac{3}{8}v}.$$ Let $M$ such that $\sum_{v=M}^{\infty}\e^{-3v/8}<\varepsilon/2$. Then with probability exceeding $1-\varepsilon/2$, there is no $v\geq M$ such that $W^{\ssup{2v}}>\sqrt{({N}/{v})\bar\Psi_N}$ which is equivalent to 
 \begin{equation}
 \label{evenbound}
 W^{\ssup{v}}\leq \sqrt{2}\,\sqrt{\frac{N}{v}\bar\Psi_N} \quad \text{for all even } v\geq 2M.
 \end{equation} Now if \eqref{eq:degBound} were not true for any odd index $v+1 > 2M$ and $C(\eps)>2$, this would mean in particular that $$W^{\ssup{v}}>C(\eps)\sqrt{\frac{N}{v+1}\bar\Psi_N}=C(\eps)\sqrt{\frac{v}{v+1}}\sqrt{\frac{N}{v}\bar\Psi_N}\geq \frac{C(\eps)}{\sqrt{2}}\sqrt{\frac{N}{v}\bar\Psi_N},$$ contradicting \eqref{evenbound}. We conclude that \eqref{eq:degBound} holds with $C(\eps)>2$ for all $v\geq 2M$ with probability exceeding $1-\eps/2$. Turning our attention to the weights $W^{\ssup{v}},\dots,W^{\ssup{2M}}$, we note that by a standard Poisson approximation result, see e.g.~\cite[Proposition 3.21]{Re87},
 for any $1\leq v \leq 2M$, we have that $S_N^{\ssup v}$ converges weakly to a Poisson distribution
 with parameter $\lambda:=\lim_{N\to\infty}
 N p(v)\leq 2\delta M.$
 Hence by choosing $\delta$ small enough we can ensure that, for large~$N$, we have
 $\sum_{i=1}^{2M} \PP\{ S^{\ssup i}_N > i\}\leq \eps/2,$  which completes the proof of \eqref{eq:degBound}.
Application of \cref{lem:1stMomentBound} now concludes the proof of \cref{prop:CMLB} as $\log \Psi_N=\big(2\alpha+o(1)\big)\log\log N$ and~$\kappa_N=1$.
\end{proof}

\subsection{Proof of the upper bound}\label{sec:proofsCM}
We now prove the upper bound in Theorem 2.
\begin{prop}[Upper bound on distances in NR]\label{prop:ubCM}
Let $\mathcal{H}_N$ be a Norros-Reittu network with weight distribution satisfying \eqref{eq:CMcondition}. Consider vertices $U,V$ chosen independently and  uniformly at random from the largest component $\Cc_N\subset\mathcal{H}_N$. Then, for any~$\delta>0$, $$d_N(U,V)\leq\Big(\frac{1}{1+2\alpha}+\delta\Big)\frac{\log N}{\log\log N}\ \ \textrm{ with high probability as }N\to\infty.$$
\end{prop}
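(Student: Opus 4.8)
The plan is to carry over the scheme behind \cref{thm:PA} to $\mathcal{H}_N$, now exploiting that conditionally on the weight sequence $W=(W_i)$ the edges of $\mathcal{H}_N$ are independent, which makes every moment estimate transparent. Throughout we condition on the good set $\Omega_\eps$ of \cref{prop:weightas} and relabel the vertices in decreasing order of weight, writing $W^{\ssup 1}\ge\dots\ge W^{\ssup N}$; arguing as in the proof of \cref{prop:CMLB} (but keeping matching two-sided bounds) we may assume $W^{\ssup v}\approx\sqrt{N/v}\,(1\vee\log(N/v))^{\alpha}$ on $\Omega_\eps$, and \eqref{eq:sumbound2}--\eqref{eq:sumbound3} then yield the truncated moment asymptotics $\sum_{v\ge\ell}(W^{\ssup v})^{2}\approx N\,(\log(N/\ell))^{2\alpha+1}$ and $\sum_{v\ge\ell}(W^{\ssup v})^{3}\approx N^{3/2}(\log(N/\ell))^{3\alpha}\,\ell^{-1/2}$, with $L_N\approx N$. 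As in the preferential-attachment case the path is routed through an \emph{inner core}: we show that $U$ and $V$ each lie within $\big(\tfrac{1}{4\alpha+2}+o(1)\big)\tfrac{\log N}{\log\log N}$ of a random set $C_N$ of uniformly bounded diameter, which yields the asserted $\big(\tfrac{1}{1+2\alpha}+o(1)\big)\tfrac{\log N}{\log\log N}$ because $\tfrac{2}{4\alpha+2}=\tfrac{1}{1+2\alpha}$.

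For the core, fix a large $R$ and let $C_N$ consist of $M_N=\lfloor(\log N)^R\rfloor$ vertices of largest weight. On $\Omega_\eps$ each such vertex has weight at least a constant times $\sqrt{N/M_N}\,(\log N)^{\alpha}$, so any two of them are joined by an edge of $\mathcal{H}_N$ with probability at least a constant times $(\log N)^{2\alpha}/M_N=M_N^{2\alpha/R-1+o(1)}$, and in this regime $1-\e^{-x}\approx x$. Hence the subgraph of $\mathcal{H}_N$ induced on $C_N$ stochastically dominates an inhomogeneous Erd\H{o}s--R\'enyi graph of that edge density, and the dense-graph diameter estimate already invoked in \cref{lem:multinomial} (via \cite[Corollary 10.12]{bollobs_random_2001}) gives $\max_{u,v\in C_N}d_N(u,v)\le\lfloor R/(2\alpha)\rfloor+O(1)$ with high probability. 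This step is strictly simpler than \cref{prop:cores}: in the rank-one model the heavy vertices are linked directly, so no sprinkling through young connector vertices is required.

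Starting from $U$ we then run the truncated breadth-first exploration of the main-phase analysis, now taking the \emph{score} of a set of vertices to be the sum of their weights, with truncation levels $(\ell_k)$ defined exactly as in \eqref{def:ell}. The local-approximation input---the rank-one counterpart of \cref{prop:localknown}, namely that the local weak limit of $\mathcal{H}_N$ is a size-biased Poisson two-stage branching process whose survival probability equals the asymptotic giant-component fraction---shows that, conditionally on $U\in\Cc_N$, within $O(1)$ steps the active set reaches score at least a prescribed constant $s_0$ with probability $1-o(1)$. In the main phase, if the active set has score $s$ then the next generation has expected score $\approx(s/L_N)\sum_{v\ge\ell_k}(W^{\ssup v})^{2}\approx s\,(\log(N/\ell_k))^{2\alpha+1}$, while both its variance and the collision correction (the probability that a veiled vertex attaches to two active vertices, estimated as in \cref{prop:colbd}) are governed by $\sum_{v\ge\ell_k}(W^{\ssup v})^{3}$; the levels $(\ell_k)$ are tuned precisely so that the current score, which satisfies $S_k\approx\sqrt{N/\ell_k}$ up to the logarithmic factor built into \eqref{def:ell}, always makes the Chung--Lu lower-tail inequality \cref{lem:CL27} force $S_{k+1}$ above this expected multiplicative growth with super-polynomially small failure probability. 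Summing the per-step errors over the $O(\log N/\log\log N)$ exploration steps exactly as in the proof of \cref{prop:scoregrowth}, we obtain that after $K=\big(\tfrac{1}{4\alpha+2}+\eta\big)\tfrac{\log N}{\log\log N}$ steps the score exceeds $\sqrt{N}/(\log N)^{O(1)}$ with probability $1-o(1)$; the exponent $2\alpha+1$ here (against $\alpha+1$ in the preferential-attachment case) is exactly what turns $\tfrac{1}{2\alpha+2}$ into $\tfrac{1}{4\alpha+2}$.

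Finally, since the truncation levels $\ell_k$ stay above $M_N$ throughout the main phase (exactly as in the proof of \cref{thm:PA}), the explorations never inspect an edge incident to $C_N$, and---unless $U$ and $V$ are already discovered to be close---the explorations of $U$ and $V$ touch disjoint vertex sets; hence the Poisson edges joining $C_N$ to the set $V(i)$ of vertices discovered by the $i$th exploration, and the edges within $C_N$, are still fresh and mutually independent. Since $\sum_{v\in V(i)}W_v\ge\sqrt{N}/(\log N)^{O(1)}$ and $\sum_{w\in C_N}W_w$ is at least a constant times $\sqrt{M_N N}\,(\log N)^{\alpha}$, we get
\[
\PP\big(V(i)\not\leftrightarrow C_N\,\big|\,W\big)\le\exp\Big(-\tfrac{1}{L_N}\sum_{v\in V(i)}\sum_{w\in C_N}W_vW_w\Big)\le\exp\big(-c\,(\log N)^{R/2+\alpha-O(1)}\big)\longrightarrow 0
\]
once $R$ is chosen large enough. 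Combining this with the bounded diameter of $C_N$ yields $d_N(U,V)\le 2K+O(1)\le\big(\tfrac{1}{1+2\alpha}+\delta\big)\tfrac{\log N}{\log\log N}$ with high probability, upon letting $\eps,\eta\downarrow 0$. The main obstacle is the concentration of the score growth: the score is dominated by the contributions of a handful of extremely heavy vertices, so without the truncation device and the sharp moment asymptotics supplied by \cref{prop:weightas} the relevant second moment would be too large for \cref{lem:CL27} to be useful; everything else is lighter bookkeeping than in the preferential-attachment case, thanks to the conditional independence of the edges.
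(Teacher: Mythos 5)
Your proposal is correct and follows essentially the same route as the paper's own (sketched) argument: local weak convergence to seed the exploration, a truncated breadth-first search whose weight-score grows by a factor $(\log)^{2\alpha+1}$ per generation with failure probability controlled by the Chung--Lu lower-tail bound via the truncated second and third weight moments, a core of the $(\log N)^{R}$ heaviest vertices of bounded diameter obtained by Erd\H{o}s--R\'enyi coupling, and a direct Poisson connection of the explored sets to that core. The only (harmless) imprecision is the phrase ``truncation levels defined exactly as in \eqref{def:ell}'': the recursion must be re-tuned with the growth factor $(\log(N/\ell_k))^{2\alpha+1}$ in place of $(\log(N/\ell_k))^{\alpha+1}$, which is what you in fact use in the subsequent computation (the paper equivalently truncates by weight rather than by index).
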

This result can be obtained by a straightforward adaptation of the proof of \cite[Theorem 3.22]{hofstad_random_2012}, which uses the second moment method in combination with path counting techniques. For the closely related Chung-Lu model with deterministic weights, a related result is \cite[Theorem 7.9]{chung_complex_2006}, the proof of which also works in our setting. We provide a sketch of a proof relying on similar arguments as given in Section 4 for the preferential attachment network.\medskip

For $H\subset [N]$ we denote by $W(H)=\sum_{v\in H}W_v$ the \emph{total weight} of $H$. 
Just like in the preferential model, the neighborhood of a uniformly chosen vertex $V\in\mathcal{H}_N$ converges in distribution to a random tree $\mathfrak{S}$. This tree can be obtained by a \emph{mixed Poisson branching process}, see \cite{norros2006}.
Denoting by  $p(W)$ the probability of $\{|\mathfrak{S}|=\infty\}$, we get $\lim_{N\to\infty}{\#\mathcal C_N}/{N}=p(W)$ in probability, see \cite[Section 3.1.]{hofstad_random_2012}. 
\medskip\pagebreak[3]

The following facts are instrumental for our argument.
\begin{lem}\label[lem]{prop:local}
Choose $V\in[N]$ uniformly. For every $\varepsilon\in(0,p(W)), s_0>0$ there exists $k_0>0$, such that $\PP(W(\{v\in[N]:d_N(V,v)=k_0\})\geq s_0)\geq p(W)-\varepsilon,$ for sufficiently large~$N$.
\end{lem}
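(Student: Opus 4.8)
The plan is to derive the statement from the local weak convergence of the neighbourhood of a uniformly chosen vertex of $\mathcal{H}_N$ to the mixed Poisson branching process tree $\mathfrak{S}$ recalled above (see \cite{norros2006,hofstad_random_2012}), combined with an elementary analysis of $\mathfrak{S}$. For a weighted graph $G$ with distinguished root $v$ write $S_k(G,v)=\sum_{u\colon d_G(v,u)=k} W_u$ for the total weight of the sphere of radius $k$ around $v$, and abbreviate $\mathcal{S}_k=S_k(\mathfrak{S},\emptyset)$ for the total weight of generation $k$ of $\mathfrak{S}$. The quantity appearing in the lemma is exactly $S_{k_0}(\mathcal{H}_N,V)$.

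First I would establish the branching process estimate that, for every fixed $s>0$,
\[
\PP(\mathcal{S}_k\geq s)\longrightarrow p(W)\qquad\text{as }k\to\infty .
\]
Since $\{\mathcal{S}_k\geq s\}\subseteq\{\text{generation }k\text{ of }\mathfrak{S}\text{ is non-empty}\}$ and the probability of the latter event decreases to the survival probability $p(W)$, the upper bound $\limsup_k\PP(\mathcal{S}_k\geq s)\leq p(W)$ is immediate. For the matching lower bound, recall that the offspring distribution of every non-root vertex of $\mathfrak{S}$ is a mixed Poisson law in the size-biased weight $\hat W$, which by \eqref{eq:CMcondition} has infinite mean; in particular $\mathfrak{S}$ is supercritical with almost surely finite generations, so on the survival event $\Sigma=\{|\mathfrak{S}|=\infty\}$ the generation sizes $Z_k$ tend to infinity almost surely --- the dichotomy $\{Z_k\to 0\}\cup\{Z_k\to\infty\}$ requires only $\PP(\text{at least two children})>0$ and not finiteness of the mean. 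Conditionally on the tree up to generation $k$, the weights of the $Z_k$ vertices of generation $k$ are i.i.d.\ copies of the almost surely positive weight $\hat W$, so fixing $\delta>0$ with $q:=\PP(\hat W\geq\delta)>0$ and comparing with a binomial variable gives $\PP(\mathcal{S}_k<s\mid Z_k=m)\leq\PP(\mathrm{Bin}(m,q)<s/\delta)\to 0$ as $m\to\infty$. Splitting $\PP(\mathcal{S}_k<s,\Sigma)$ over $\{Z_k\geq m_0\}$ and $\{Z_k<m_0\}$, letting $k\to\infty$ and then $m_0\to\infty$, yields $\PP(\mathcal{S}_k\geq s,\Sigma)\to p(W)$, which is the lower bound. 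Given $\varepsilon\in(0,p(W))$ and $s_0>0$ I then fix $k_0$ with $\PP(\mathcal{S}_{k_0}\geq 2s_0)\geq p(W)-\varepsilon/2$.

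Finally I would transfer this to $\mathcal{H}_N$. By the local weak convergence the rooted weighted ball of radius $k_0$ around $V$ in $\mathcal{H}_N$ converges in distribution to the ball of radius $k_0$ in $\mathfrak{S}$; since the map sending a rooted weighted graph to $\min\big(S_{k_0}(\cdot),3s_0\big)$ is bounded and continuous for this topology --- the truncation being needed only because the weights, and hence $S_{k_0}$, are unbounded --- we obtain $\min\big(S_{k_0}(\mathcal{H}_N,V),3s_0\big)\Rightarrow\min(\mathcal{S}_{k_0},3s_0)$. The portmanteau theorem applied to the open set $(s_0,\infty)$ then gives
\[
\liminf_{N\to\infty}\PP\big(S_{k_0}(\mathcal{H}_N,V)\geq s_0\big)\;\geq\;\PP(\mathcal{S}_{k_0}>s_0)\;\geq\;\PP(\mathcal{S}_{k_0}\geq 2s_0)\;\geq\;p(W)-\varepsilon/2,
\]
so that $\PP\big(W(\{v\colon d_N(V,v)=k_0\})\geq s_0\big)\geq p(W)-\varepsilon$ for all sufficiently large $N$, as required. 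I expect the only genuinely delicate point to be the interplay, in this last passage to the limit, between the infinite-mean branching process and the unbounded marks; the local limit itself and the remaining bookkeeping are routine once \cite{norros2006,hofstad_random_2012} are invoked.
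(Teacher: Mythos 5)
Your argument is correct and follows the same route as the paper, which disposes of this lemma in one sentence by invoking local weak convergence to $\mathfrak{S}$ together with supercriticality of the offspring law; your write-up merely supplies the details (the $Z_k\to0$ or $Z_k\to\infty$ dichotomy, the i.i.d.\ size-biased weights in generation $k$, and the truncation needed to apply portmanteau to the unbounded sphere-weight functional). No gaps.
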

\begin{proof}
This follows from local weak convergence to $\mathfrak{S}$ and the fact that the offspring distribution of the branching process generating $\mathfrak{S}$ has infinite mean in every generation~$k\geq 2$, hence is supercritical.
\end{proof}
\begin{lem}\label[lem]{prop:coresNR}
Fix $M=\lceil\log N^R \rceil$ for some fixed $R>0$ and let $C_N$ denote the $M$ vertices with the largest weights. Then the diameter of the subgraph induced by $C_N\subset \mathcal{H}_N$ is bounded with high probability, as $N\to\infty.$
\end{lem}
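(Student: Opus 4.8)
The plan is to realise the subgraph of $\mathcal{H}_N$ induced by $C_N$ as stochastically dominating a dense homogeneous Erdős–Rényi graph on $M$ vertices, and then to invoke the classical diameter estimate for such graphs, exactly as in the proof of \cref{lem:multinomial}. Everything is done conditionally on the weight sequence, on a good event of probability at least $1-\eps$, and we then let $\eps\downarrow 0$.

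First I would control the $M$-th largest weight. Writing $W^{\ssup 1}\ge\cdots\ge W^{\ssup N}$ for the decreasing order statistics, every $v\in C_N$ satisfies $W_v\ge W^{\ssup M}$. For a threshold $t$ the count $S_N(t):=\#\{i\le N\colon W_i\ge t\}$ is binomial with parameters $N$ and $1-F_1(t)$, and $\{W^{\ssup M}\ge t\}=\{S_N(t)\ge M\}$. Fix $\eps'\in(0,\alpha)$ and set $t_N:=\sqrt{N/M}\,(\log N)^{\alpha-\eps'}$. Since $\log(N/M)\sim\log N$, the tail assumption \eqref{TAILS} gives $N(1-F_1(t_N))=M(\log N)^{2\eps'+o(1)}$, which goes to infinity relative to $M$, so by Bernstein's inequality (applied as in the proof of \cref{prop:CMLB}) $\PP(S_N(t_N)\ge M)\to 1$. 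Hence with high probability
$$ W^{\ssup M}\ \ge\ \sqrt{N/M}\,(\log N)^{\alpha-\eps'}. $$
I also use that $L_N=\sum_{i\le N}W_i\le CN$ with high probability, which holds because $\EE W_1<\infty$ by \eqref{TAILS}.

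Next, conditionally on the weights the edges of $\mathcal{H}_N$ are independent with $\PP(v\leftrightarrow w)=1-\e^{-W_vW_w/L_N}$, so the subgraph induced by $C_N$ dominates $\Gg(M,p)$ with $p:=1-\e^{-(W^{\ssup M})^2/L_N}\ge\tfrac12\big(1\wedge (W^{\ssup M})^2/L_N\big)$. Inserting the bounds above, $(W^{\ssup M})^2/L_N\ge C^{-1}(\log N)^{2\alpha-2\eps'}/M$, and since $M=\lfloor(\log N)^R\rfloor$ gives $\log M=R\log\log N$, we have $(\log N)^{2\alpha-2\eps'}=M^{(2\alpha-2\eps')/R}$. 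Therefore, for any fixed $\rho$ with $0<\rho<\min\{1,\,2\alpha/R\}$, choosing $\eps'$ small enough yields $p\ge M^{\rho-1}$ for all sufficiently large $N$; this covers both the regime where $p\to 0$ and the regime $R<2\alpha$ where $p$ stays bounded away from $0$ (there $p\ge\tfrac12\ge M^{\rho-1}$ for large $N$). Since $M\to\infty$, the Erdős–Rényi diameter estimate used in the proof of \cref{lem:multinomial} (\cite[Corollary 10.12]{bollobs_random_2001}), applied with $n=M$, $p\ge M^{\rho-1}$ and $d=\lfloor 1/\rho\rfloor+1$, shows that $\Gg(M,p)$, and hence the subgraph induced by $C_N$, has diameter at most $\max(3,\lfloor 1/\rho\rfloor+1)$ with high probability. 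Letting $\rho\uparrow\min\{1,2\alpha/R\}$ gives the claimed boundedness, with explicit bound $\max(3,\lfloor R/(2\alpha)\rfloor+1)$.

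The argument is a simplified version of the core estimate for preferential attachment (\cref{prop:cores}), so no serious difficulty arises; the one point requiring care is extracting from the polylogarithmically corrected tail \eqref{TAILS} a lower bound on $W^{\ssup M}$ that carries a \emph{strictly positive} power of $\log N$ — it is precisely this factor, and hence the hypothesis $\alpha>0$, that makes $\rho>0$ and turns $\Gg(M,p)$ into a graph of bounded diameter — together with the bookkeeping, via $\log M=R\log\log N$, that puts this bound into the form $p\ge M^{\rho-1}$ demanded by the Erdős–Rényi estimate.
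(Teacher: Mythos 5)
Your proposal is correct and follows essentially the same route as the paper: a lower bound on the $M$-th largest weight of order $\sqrt{N/M}\,(\log N)^{\alpha-\eps'}$, followed by a coupling of the induced subgraph to a dense Erd\H{o}s--R\'enyi graph $\Gg(M,p)$ and the classical diameter estimate \cite[Corollary 10.12]{bollobs_random_2001}. The only (harmless) difference is that you derive the order-statistic bound directly from a binomial count and Bernstein's inequality, where the paper cites a standard extreme value result.
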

\begin{proof}
Given~$N$ we relabel the vertices of  $\mathcal{H}_N$ in decreasing order of weight and denote by
$W^{\ssup 1}\geq \cdots\geq W^{\ssup N}$ the order statistics of the first $N$ weights.
Fix $\varepsilon>0$ and $\delta\in(0,\alpha)$. Then
$L_N:=\sum_{i=1}^N W_i \sim N \, \EE W_1$, and 
$$W^{\ssup v}\geq \sqrt{\frac NM}\big(\log\sfrac N M \big)^{\alpha-\delta}, \textrm{ for all } v\in[M],$$ on a subset $\Omega_{\varepsilon}$ with probability exceeding $1-\eps$, by a standard extreme value calculation, using e.g. \cite[Theorem 2.5.2]{leadbetter}. Given the weights, each pair of vertices $(v,w)\in C_N$ independently is connected with probability at least $$1-\e^{-{(W^{\ssup{M}})^2}/{L_N}}\geq \frac{\big(\log \sfrac NM\big) ^{2\alpha-2\delta}}{ 3M \EE W_1}=:p(M,N).$$ Now coupling to an Erd\H{o}s-R\'{e}nyi  graph $\Gg(M,p(N,M))$ and \cite[Corollary 10.12]{bollobs_random_2001} yield the boundedness of the diameter.
\end{proof}
\begin{lem}\label[lem]{lem:connections}
If $V_1,V_2\subset[N]$ are disjoint sets with total weights satisfying $$\lim_{N\to\infty}\frac1N W(V_1)W(V_2)=\infty \textrm{ in probability,}$$ then they are connected with high probability in $\mathcal{H}_N$.
\end{lem}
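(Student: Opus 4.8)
The plan is to observe that \cref{lem:connections} is really a one-edge statement: it suffices to show that with probability tending to one there is \emph{at least one} edge of $\mathcal H_N$ with one endpoint in $V_1$ and the other in $V_2$. First I would dispose of the deterministic preliminaries on the weights. Since $\PP(W_1\geq k)=k^{-2}(\log k)^{2\alpha+o(1)}$ is summable, $\EE W_1<\infty$, so by the strong law of large numbers $L_N/N\to\mu:=\EE W_1\in(0,\infty)$ almost surely; in particular $N/L_N$ is stochastically bounded, and the hypothesis $N^{-1}W(V_1)W(V_2)\to\infty$ in probability upgrades to
\[
\frac{W(V_1)W(V_2)}{L_N}\longrightarrow\infty\qquad\text{in probability.}
\]

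Next I would use the independence structure of the Norros--Reittu model. Condition on a $\sigma$-algebra $\mathcal F$ that carries the weight sequence $(W_i)_{i\le N}$ together with all further randomness used to define the (possibly random) sets $V_1,V_2$, chosen so that $\mathcal F$ does not expose the status of any potential edge between $V_1$ and $V_2$ --- in the applications $V_1,V_2$ are either deterministic functions of the weights or are produced by explorations that have not inspected edges between $V_1$ and $V_2$, so such an $\mathcal F$ is available, if necessary after a sprinkling-type splitting of each Poisson edge weight into two independent halves. Given $\mathcal F$, the edge multiplicities $\mathrm{Poi}(W_vW_w/L_N)$ across pairs $v\in V_1$, $w\in V_2$ are independent, so the total number of $V_1$--$V_2$ edges is conditionally Poisson distributed with mean $\sum_{v\in V_1,\,w\in V_2}W_vW_w/L_N=W(V_1)W(V_2)/L_N$. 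Hence
\[
\PP\big(V_1\not\leftrightarrow V_2\mid\mathcal F\big)\le\PP\big(\text{no edge between } V_1 \text{ and } V_2\mid\mathcal F\big)=\exp\!\Big(-\tfrac{W(V_1)W(V_2)}{L_N}\Big)=:R_N.
\]

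Finally I would conclude by integrating: $R_N$ is $\mathcal F$-measurable with values in $[0,1]$, and by the first paragraph its exponent diverges in probability, so $R_N\to 0$ in probability and, being bounded, also $\EE R_N\to 0$; therefore $\PP(V_1\not\leftrightarrow V_2)\le\EE R_N\to0$, which is the claim. The only genuinely delicate point --- and the one I would spell out carefully at the point of application rather than here --- is the bookkeeping guaranteeing that the $V_1$--$V_2$ edges are still ``unexposed'' when the lemma is invoked, so that the conditional Poisson count is legitimate; the probabilistic content beyond that is merely $\EE W_1<\infty$ together with the independence of edges in $\mathcal H_N$.
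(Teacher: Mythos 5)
Your proposal is correct and is essentially the paper's own argument: the paper likewise computes $\PP(V_1\not\leftrightarrow V_2\mid W)=\e^{-W(V_1)W(V_2)/L_N}$ from the conditional independence of the Poisson edge counts and concludes from the divergence in probability of the exponent (using $L_N\sim N\,\EE W_1$). Your additional care about the conditioning $\sigma$-algebra and the passage from convergence in probability of the bounded quantity $R_N$ to convergence of its expectation only makes explicit what the paper leaves implicit.
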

\begin{proof}
By conditional independence,  $\PP(V_1\not\leftrightarrow V_2)=\e^{-{W(V_1)W(V_2)}/{L_N}}$, from which the result follows since $W(V_1)W(V_2)/L_N$ diverges to infinity, in probability.
\end{proof}
\begin{proof}[Proof of  Proposition \ref{prop:ubCM}]
In view of \cref{prop:local,prop:coresNR,lem:connections} it is sufficient to show that a truncated exploration in $\mathcal{H}_N$ started in a configuration $\Ee_0$ of large initial weight $S_0$ with high probability, as $N\to\infty$, reaches a configuration $\Ee_k$ satisfying $$S_{k}=W(\mathrm{active}(\Ee_k))\geq \frac{\sqrt{N}}{(\log N)^{R}}$$ in less than $K$ stages, where $R, \delta>0$ are fixed and $$K=\Big(\frac{1}{2+4\alpha}+\delta \Big)\frac{\log N}{\log\log N}.$$ We truncate the exploration in the following way: at stage $k$, we only investigate connections between active vertices and vertices of weight at most $w_{k+1}$, where $(w_k)_{k\geq 1}$ is a superexponentially growing sequence specified below.
Since we would like to condition on the weights, we start by demonstrating that almost all weight sequences have certain properties. Let $(A_k)_{k=0}^{K}$ denote a partition of the set $[1,\sqrt{N}(\log N)^{2\alpha})$ into $K$ nonoverlapping intervals $A_k=[a_{k},a_{k+1})$ of equal length. Applying \cref{lem:CL27}, 
and a brief calculation we may assume that $W_1,\dots, W_N$ satisfy, 
\begin{equation}\label{eq:psumbound2nd}
\sum_{i=1}^{N}W_i^2\1\{W_i\leq w_k\}\geq \frac 12\EE \Big[\sum_{i=1}^{N}W_i^2\1\{W_i\leq w_k\}\Big],\; 
\mbox{ for }1\leq k\leq K,
\end{equation}
as well as
\begin{equation}\label{eq:psumbound3rd}
\sum_{i=1}^{N}W_i^3\1\{W_i\leq w_k\}\leq \frac 32\EE\Big[ \sum_{i=1}^{N}W_i^3\1\{W_i\leq w_k\}\Big],\; 
\mbox{ for } 1\leq k\leq K.
\end{equation}

Fix $\varepsilon>0$. Let $\mathcal{E}$ be a configuration obtained from an exploration of $\mathcal{H}_N$, $S=W(\mathrm{active}(\mathcal{E}))$, $H=\mathrm{active}(\mathcal{E})\cup\mathrm{dead}(\mathcal{E})$, $w>0$ and $V=V(w)=\{v\in\mathrm{veiled}(\mathcal{E}): W_v\leq w\}.$ It is easy to see, using an appropriate coupling to a sum of independent weighted Bernoulli random variables and \cref{lem:CL27} that, as long as $wS=o(L_N),$ \begin{equation}\label{eq:grwtbd}W(\{v\in V:v\leftrightarrow \mathrm{active}(\mathcal{E})\})\geq \frac{\sum_{v\in V}W^2_v}{4 L_N} \, S=:\nu(w,N)S,\end{equation} conditional on $\mathcal{E}$ and the weight sequence, with probability at least \begin{equation}\label{eq:errbd} 1-\e^{-\frac{(\sum_{v\in V}W_v^2)^2}{4 L_N \sum_{v\in V}W_v^3}S}.\end{equation}
Note that, by \eqref{eq:sumbound2} and our choice of weight distribution, \begin{align*}\sum_{v\in V}W_v^2
& \geq \sum_{v\in[N]}W^2_{v}\1\{W_v\leq w\} -  \big(\max_{a\in H}W_a\big) W(H).
\end{align*}
Hence choosing $w_0$ sufficiently large, setting $w_{k}=c(\delta,\varepsilon)\log N^{1+2\alpha-\eta(\delta)}w_{k-1}, \; 1\leq k\leq K,$ for some appropriately chosen small values of $c(\delta,\varepsilon),\eta(\delta)$ and letting $V_k=V(w_k)$ in \eqref{eq:grwtbd}, we obtain that the weight $S_k$ of the active vertices increases 
in each stage $k$ of the exploration by a factor of at least $\nu(w_k,N)\geq c\log(w_k)^{2\alpha +1 -\eta(\delta)},$ for some constant $c$ which depends on $\delta$ and $\varepsilon$ but not on $N$. A straightforward calculation now shows that the exploration satisfies $$S_{k}=W(\mathrm{active}(\Ee_k))\geq \frac{\sqrt{N}}{(\log N)^{R}}$$ after at most $K$ stages. Summing the error terms in \eqref{eq:errbd}
	for the different stages using \eqref{eq:psumbound2nd} and \eqref{eq:psumbound3rd}, we obtain for some constants $c_1,c_2$, which are independent of $N$,
\begin{align*}
\sum_{k=1}^K \e^{-\frac{(\sum_{v\in V_k}W_v^2)^2}{4 L_N \sum_{v\in V_k}W_v^3}S_{k-1}}\leq \sum_{k=1}^K \e^{-c_1\frac{(\log w_k)^{4\alpha+2-2\eta(\delta)}N^2}{N^2 w_k (\log w_k)^{2\alpha+\eta(\delta)}}S_{k-1}}\leq\sum_{k=1}^K \e^{-c_2(\log w_k)^{1-\eta(\delta)}}<\varepsilon,
\end{align*}
as $N\to\infty$. This concludes the proof, since $\varepsilon$ and $\delta$ where chosen arbitrarily.
\end{proof}
\appendix
\section{Further calculations for preferential attachment networks}\label{knownresults}

The following lemma is used to prove \cref{prop:localknown}. The proof  relies on a coupling of local neighbourhoods in~$\mathcal{G}_N$ with the `idealised neighbourhood tree' $\mathfrak{T}$  introduced in \cite[Section 1.3]{dereich_random_2013}, in which vertices of the tree have positions on the negative real line. We denote by $\mathfrak{T}_k$  the $k$-th generation of $\mathfrak{T}$, and by $p(f)$ be  the probability that $\mathfrak{T}$ is infinite. 

\begin{lem}\label[lemma]{prop:frakT}
Let $\chi\colon[0,\infty)\to[1,\infty)$ be a an increasing function satisfying $$c\leq {\chi(x)}{\e^{-\frac12 x}}\leq C, \textrm{ for some }0<c\leq C <\infty.$$ Denote by $\bar\chi\colon\mathfrak{T}\to[1,\infty)$ the function defined on the vertices of $\mathfrak{T}$ by $\bar{\chi}(v)=\chi(-x_v)$, where $x_v$ is the position of $v\in\mathfrak{T}$ on the negative real line.  Then, for any $s>0$, almost surely conditional on $\#\mathfrak{T}=\infty$ there exists $K\in\NN$ and  $A_K\subset\mathfrak{T}_K$ such that
$$\sum_{v\in A_K}\bar{\chi}(v)\geq s \max_{v\in A_K}\bar{\chi}(v).$$
\end{lem}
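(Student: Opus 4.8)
The plan is to deduce the statement from two structural facts about the idealised neighbourhood tree $\mathfrak{T}$: (i) on the survival event $\{\#\mathfrak{T}=\infty\}$ the generation sizes $|\mathfrak{T}_k|$ grow at least exponentially, i.e.\ $|\mathfrak{T}_k|\geq \e^{\beta k}$ eventually for some $\beta>0$; and (ii) on the same event the leftmost position in generation $k$ cannot run off faster than linearly, i.e.\ there is $c>0$ with $\min_{v\in\mathfrak{T}_k}x_v\geq -ck$ for all large $k$, almost surely. Granting these, the statement follows by a pigeonhole argument. Since $\bar\chi(v)=\chi(-x_v)\in[c\,\e^{-x_v/2},\,C\,\e^{-x_v/2}]$, fact (ii) gives, for all large $k$, that every $v\in\mathfrak{T}_k$ satisfies $\bar\chi(v)\in[1,\e^{ck/2+O(1)}]$, so the integers $\lceil\log_2\bar\chi(v)\rceil$, $v\in\mathfrak{T}_k$, take at most $C''k$ distinct values. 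By fact (i), for large $k$ some dyadic block $(2^{j-1},2^j]$ contains at least $|\mathfrak{T}_k|/(C''k)\geq \e^{\beta k}/(C''k)$ of the $\bar\chi(v)$. As $\e^{\beta k}/(C''k)\to\infty$, given $s>0$ I pick $k$ so large that this block contains a set $A_k\subset\mathfrak{T}_k$ of at least $\lceil 2s\rceil$ vertices, whence $\sum_{v\in A_k}\bar\chi(v)\geq \#A_k\cdot 2^{j-1}\geq s\cdot 2^j\geq s\max_{v\in A_k}\bar\chi(v)$, which is the claim with $K=k$.

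It remains to justify (i) and (ii). Fact (i) is the standard growth property of a supercritical branching process: the offspring law of $\mathfrak{T}$ has mean strictly above one (in fact infinite), so on survival $|\mathfrak{T}_k|^{1/k}$ converges a.s.\ to the (possibly infinite) Perron root, which exceeds $1$; this is recorded in the branching-process analysis of \cite{dereich_random_2013}. For (ii) I would use that in $\mathfrak{T}$ the \emph{older} children of a vertex at position $x$ form a Poisson process on $(-\infty,x)$ whose displacement intensity at $x-e$ equals $\tfrac12\e^{-e/2}$, independently of $x$, whereas every \emph{younger} child sits in $(x,0]$ and hence only moves positions towards $0$. Thus the older children form a well-behaved branching random walk — the ``older skeleton'' of $\mathfrak{T}$ — in which each vertex has $\mathrm{Poisson}(1)$ offspring and the displacements are i.i.d.\ $\mathrm{Exponential}(\tfrac12)$. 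Consequently, any $v\in\mathfrak{T}_k$ with $x_v<-ck$ has an ancestor $u$ in the older skeleton, of older-depth at most $k$, for which the accumulated displacement along the older skeleton exceeds $ck/2$ (once $k$ is large enough that the root position lies above $-ck/2$). Since the expected number of older-skeleton vertices at older-depth $j$ equals $1$ and the accumulated displacement there is a sum of $j$ i.i.d.\ $\mathrm{Exponential}(\tfrac12)$ variables, a Chernoff bound gives, for $c$ large, $\PP(\exists\,v\in\mathfrak{T}_k:x_v<-ck)\leq\sum_{j=0}^k\PP\big(\sum_{i=1}^j E_i>ck/2\big)\leq (k+1)\,\e^{-\theta c k}$ with the $E_i$ i.i.d.\ $\mathrm{Exponential}(\tfrac12)$, and this is summable in $k$; Borel--Cantelli then yields (ii).

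The main obstacle is making (ii) rigorous while avoiding the fact that the offspring distribution of $\mathfrak{T}$ has infinite mean, which makes a naive first-moment bound over all of $\mathfrak{T}_k$ diverge. The resolution exploited above is that this infinite mean comes entirely from the younger children, which cannot push positions to the left, so restricting the first-moment estimate for the leftmost position to the older skeleton — a finite-mean (in fact critical) branching random walk — makes it converge; a secondary, routine point is to confirm that the explicit description of the older- and younger-child intensities above is precisely the one given for $\mathfrak{T}$ in \cite{dereich_random_2013}. Finally, note that the lemma is asserted almost surely for each fixed $s$, which is exactly what the pigeonhole argument delivers; the uniform index $k_0(s_0,\eps)$ needed in \cref{prop:localknown} is extracted there by a separate limiting argument and is not part of this lemma.
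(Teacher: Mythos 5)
Your reduction of the leftmost-position bound (ii) to the ``older skeleton'' does not work, and in fact (ii) is false. A path from the root to a vertex $v\in\mathfrak{T}_k$ with $x_v<-ck$ is in general \emph{not} contained in the subtree generated by older-children steps only: it alternates older steps (which move left) with younger steps (which move right but multiply the population). The younger steps are exactly what your first-moment computation must account for and cannot be discarded: a vertex at position $-d$ has of order $d^{\alpha}\e^{d/2}$ younger children, each of which independently produces older children with displacement intensity $\asymp u^{\alpha}\e^{-u/2}\,du$ (not $\tfrac12\e^{-u/2}$, and with total mass a constant that need not equal one). Hence the expected number of its younger--older grandchildren at positions $\leq -d-s$ is of order $d^{2\alpha+1}\e^{-s/2}$, so with high probability as $d\to\infty$ the leftmost position advances by about $(4\alpha+2)\log d$ every two generations. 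Iterating, $-\min_{v\in\mathfrak{T}_k}x_v$ grows like $k\log k$, which is superlinear; this is consistent with the superexponential decay of the truncation levels $\ell_k$ in \cref{lem:elldecay} and contradicts your linear bound. The same mechanism is why the naive first moment of $\#\{v\in\mathfrak{T}_k:x_v\leq -a\}$ is infinite for $k\geq 2$: the divergence is \emph{not} produced solely by younger children sitting near $0$, but by the interplay of deep vertices with their exponentially many younger children, so restricting to the older skeleton does not repair it.

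Your pigeonhole step could in principle survive with a polynomial (rather than linear) upper bound on the depth $D_k=-\min_{v\in\mathfrak{T}_k}x_v$ together with a superpolynomial lower bound on $\#\mathfrak{T}_k$, since all you really need is $\#\mathfrak{T}_k\gg D_k$ along a subsequence. But neither ingredient is established: the appeal to ``the Perron root'' for (i) does not apply to this position-dependent branching process whose mean offspring operator is not even finite, and a correct upper bound on $D_k$ would require a truncated many-to-one estimate over alternating paths, which is essentially the $\ell_k$-analysis of Section~4 and is well beyond a ``routine point''. The paper's proof sidesteps all of this by arguing locally: on survival one finds an adapted sequence of vertices $v(i)$ drifting to $-\infty$, considers for each the set $A(i)$ of its \emph{younger} children in $[x_{v(i)},0]$ (whose $\bar\chi$-values are automatically dominated by $\bar\chi(v(i))$), and shows via first and second moment bounds plus Paley--Zygmund that $\sum_{w\in A(i)}\bar\chi(w)\geq s\,\bar\chi(v(i))$ with probability bounded below uniformly in $i$; conditional independence over $i$ then gives the almost sure statement without ever controlling $\#\mathfrak{T}_k$ or $D_k$ globally. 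You would need either to adopt that local strategy or to supply genuine proofs of suitably weakened versions of (i) and (ii).
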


\begin{proof}
On the event $\#\mathfrak{T}=\infty$ there exists, almost surely, a sequence $(w_i)$ of vertices 
in~$\mathfrak{T}$ with positions drifting to $-\infty$, see \cite[Lemma 3.3]{dereich_random_2013}. 
We choose such a sequence adapted to the natural filtration  of the branching process.
For any~$\eta>1$,  the events that $w_i$ has a child positioned in $[-2\eta,-\eta]$  are stochastically bounded 
from below by i.i.d.\ events  of positive probability. Hence we find a vertex $v(1)$ of type $\ell$ in 
$\mathfrak{T}$ with position $x_{v(1)}\in[-2\eta,-\eta]$. Continuing inductively we construct an adapted sequence of 
vertices $v(i)$ of type~$\ell$ in~$\mathfrak{T}$ such that \smash{$x_{v(i)}\in[x_{v(i-1)}-2\eta,x_{v(i-1)}-\eta]$.} 
Denote by $A(i)$ the set of offspring generated by $v(i)$ in $[x_{v(i)},0]$ and let $Y(i)=\sum_{v\in A(i)}\bar{\chi}(v).$  By definition of the underlying branching random walk, denoting by $(Z_t)_{t\geq 0}$ the idealised degree evolution process, we have 
$$
\begin{aligned}
\EE [Y_i\, | \, x_{v(i)}=x]  
= \int_{0}^{-x}\chi(-u-x)\EE f(Z_u)\di u
\geq c\e^{\frac12 x}\int_{-{x_{v(i-1)}}}^{-x}\e^{\frac12 u}\EE f(Z_u)\di u.
\end{aligned}
$$
Using the estimate \smash{$c'u^\alpha\e^{u/2}\leq \EE f(Z_u)\leq C'(u^\alpha \vee 1)\e^{ u/2}$,} for all $u\geq 0,$ which is a continuous analogue of \cref{prop:expbounds1,prop:expbounds2} and may be shown in a similar fashion for our choice of attachment rule, we get a lower bound of
\begin{equation}\label{eq:help1}
\begin{aligned}
\EE [Y_i\, | \, x_{v(i)}=x] &  \geq cc'\, \e^{\frac12 x}\int_{-{x_{v(i-1)}}}^{-x}\e^{\frac12 u}u^\alpha \e^{\frac12 u}\di u
\geq c''\, (-x_{v(i-1)})^{\alpha}\e^{-\frac12 x}, 
\end{aligned}
\end{equation}
for some constant $c''>0$ not depending on $\eta$.
From \eqref{eq:help1} we get  $i_0(s)\in \NN$ such that 
\begin{equation}\label{eq:help2}\EE [Y_i\, |\, x_{v(i)}=x]\geq 2s\bar{\chi}(v(i)), \textrm{ for all }i\geq i_0.\end{equation}
Calculating $\EE [Y_i^2\,|\,x_{v(i)}=x]$ is slightly more subtle. We have
$$\EE\Big[\sum_{v\in A(i)}\bar{\chi}^2(v)\Big|x_{v(i)}=x\Big]\leq C'\, (-x)^{\alpha}\e^{-\frac12 x},$$ for some 
constant $C'>0$, by a calculation similar to \eqref{eq:help1}. 
Note  that, by \cite[Lemma 2.5]{dereich_random_2013}, for any $u\geq 0$, we have  
\smash{$\EE [f(Z_t)|\Delta Z_u=1]\leq \EE [f(Z_t)|Z_0=1]\leq f(1)f(0)^{-1}\EE f(Z_t),$}  for all $t\geq u.$ The offspring intensity of $v(i)$ on $[x_u,0]$ conditional on producing offspring in position $x_u$ is thus bounded by a constant multiple of the unconditional intensity. This implies that 
$$\EE\Big[\sum_{\heap{u,v\in A(i)}{u<v}}\bar{\chi}(u)\bar{\chi}(v)\,\Big|\,x_{v(i)}=x\Big]\leq C''\EE\Big[\sum_{v\in A(i)}\bar{\chi}(v)\,\Big|\,x_{v(i)}=x\Big]^2\leq C'''\,(-x)^{2\alpha}\e^{-x},\\[-2mm]$$ by a similar calculation as above. 
Combining the previous two displays gives a bound on $\EE [Y_i^2\,|\,x_{v(i)}=x]$.
Using \eqref{eq:help2} and the Paley-Zygmund inequality, we infer 
$$\PP\big(Y_i\geq s\bar{\chi}(v(i))\, \big| \, x_{v(i)}=x \big)
\geq \PP\big(Y_i\geq \sfrac12\EE[Y_i|x_{v(i)}=x]\big|x_{v(i)}=x\big)
\geq \frac{\EE[Y_i|x_{v(i)}=x]^2}{4\EE[Y_i^2|x_{v(i)}=x]}.$$ The moment estimates and assumptions on $v(i)$  imply that, for some small constants $c,q>0$, 
$$\PP(Y_i\geq s\bar{\chi}(v(i))|x_{v(i)}=x)\geq c\Big(\frac{x_{v(i-1)}}{x}\Big)^{2\alpha}\geq q>0,$$ 
as soon as $i\geq i_0.$ Clearly, $\max_{u\in A(i)}\bar{\chi}(i)$ is at most $\bar{\chi}(v(i))$, since $\bar{\chi}$ is decreasing. So each of the sets $A(i)$ has probability at least $q$ of being a set with the desired property, and the assertion follows by conditional independence of the $A(i), i\geq i_0.$
\end{proof}

\begin{proof}[Proof of \cref{prop:localknown}]
Denote the tree associated with the configuration $\mathcal{E}_k$ by $T_k.$ The arguments
of \cite{dereich_random_2013} imply that, with high probability, for any fixed $k$, the configuration
$T_k$ can be coupled to $\mathfrak{T}_k$ and the scores $\xi$ defined on $T_k$ can be associated 
to a function $\chi$ satisfying the conditions of \cref{prop:frakT} such that $\xi=\bar\chi$ on corresponding 
vertices.  The claim hence follows from \cref{prop:frakT}.
\end{proof}

\begin{lem}[\cref{lem:tediousmain}]\label[lem]{lem:tedious2}
There are $\eta\in(0,1)$ and $c>0$ only depending on $\alpha$ such that for any choice of $A\subset \{\lceil 2\e^2\rceil ,\dots, N \}$ and $v_0<\frac{\min A}{\e^2}\wedge\eta N$ satisfying
\begin{equation}\label{eq:pigcond}
\big(\log\sfrac{N}{\min A}\vee 1\big)^\alpha \xi^2(A)\leq \frac c2 N \big(\log\sfrac{N}{v_0}\big)^{\alpha+1},
\end{equation}
we have, for $V=\{v_0,\dots,N\}\setminus A$ and any $a\in A$,
\begin{equation}\label{eq:pigconcl}
\sum_{v\in V}\frac1v \big(\log\sfrac{a\vee v}{a\wedge v}\vee 1\big)^{\alpha}\geq \frac c2 \big(\log\sfrac{N}{v_0}\big)^{\alpha+1},
\end{equation}
if $N$ is suffciently large.
\end{lem}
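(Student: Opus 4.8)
The plan is to split the sum over $V$ as a full sum minus the sum over $A$. Since $A\subset\{\lceil 2\e^2\rceil,\dots,N\}$ and $v_0<\min A$, we have $A\subset\{v_0,\dots,N\}$, so with $\Sigma(a):=\sum_{v=v_0}^{N}\sfrac1v\big(\log\sfrac{a\vee v}{a\wedge v}\vee 1\big)^\alpha$ one has $\sum_{v\in V}\sfrac1v\big(\log\sfrac{a\vee v}{a\wedge v}\vee1\big)^\alpha=\Sigma(a)-\sum_{v\in A}\sfrac1v\big(\log\sfrac{a\vee v}{a\wedge v}\vee1\big)^\alpha$. I would prove a lower bound $\Sigma(a)\ge c_1(\alpha)\big(\log\sfrac N{v_0}\big)^{\alpha+1}$ and an upper bound $\sum_{v\in A}\sfrac1v(\cdots)^\alpha\le\sfrac c2\big(\log\sfrac N{v_0}\big)^{\alpha+1}$, then set $c:=c_1(\alpha)$.

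The $A$-part is handled directly by the thinness hypothesis \eqref{eq:pigcond}. For every $v\in A$ and the fixed $a\in A$ one has $\min A\le a\wedge v$ and $a\vee v\le N$, hence $\big(\log\sfrac{a\vee v}{a\wedge v}\vee1\big)^\alpha\le\big(\log\sfrac N{\min A}\vee1\big)^\alpha$; moreover the lower bound in \eqref{eq:xitoroot} gives $\xi(v,N)^2\ge N/v$, so $\sum_{v\in A}v^{-1}\le\xi^2(A)/N$. Combining these with \eqref{eq:pigcond} yields $\sum_{v\in A}\sfrac1v\big(\log\sfrac{a\vee v}{a\wedge v}\vee1\big)^\alpha\le\big(\log\sfrac N{\min A}\vee1\big)^\alpha\,\xi^2(A)/N\le\sfrac c2\big(\log\sfrac N{v_0}\big)^{\alpha+1}$.

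The substance of the argument is the lower bound on $\Sigma(a)$, and here the constraint $v_0<\min A/\e^2\le a/\e^2$ is essential, since it forces $\log\sfrac a{v_0}>2$. The substitution $v=\e^t$ turns $\Sigma(a)$, up to a routine sum--integral comparison, into $\int_{\log v_0}^{\log N}|t-\log a|^\alpha\,dt=\tfrac1{\alpha+1}\big((\log\sfrac a{v_0})^{\alpha+1}+(\log\sfrac Na)^{\alpha+1}\big)$: on the range $v\le a/\e$ the summand $v^{-1}(\log\sfrac av)^\alpha$ is decreasing, so the comparison is immediate and the substitution $u=\log\sfrac ax$ gives a lower bound $\tfrac1{\alpha+1}\big((\log\sfrac a{v_0})^{\alpha+1}-1\big)\ge\tfrac1{2(\alpha+1)}(\log\sfrac a{v_0})^{\alpha+1}$ (using $\log\sfrac a{v_0}>2$); on $v\ge\e a$ the summand is not monotone, but bounding $v^{-1}\ge\log\sfrac{v+1}v$ together with $\log\sfrac xa\le 2\log\sfrac va$ for $v\le x\le v+1$ (valid once $v\ge\e a$) reduces the sum to $2^{-\alpha}\int_1^{\log(N/a)}s^\alpha\,ds$ up to a negligible $O(1/a)$ term, whenever $N\ge\e a$. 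To combine the two pieces I would use the convexity inequality $x^{\alpha+1}+y^{\alpha+1}\ge 2^{-\alpha}(x+y)^{\alpha+1}$ (valid for $x,y\ge0$ since $\alpha+1\ge1$) with $x=\log\sfrac a{v_0}$, $y=\log\sfrac Na$: when $\log\sfrac Na\ge2$ the two displayed bounds add up to $\ge c_1(\alpha)\big(\log\sfrac N{v_0}\big)^{\alpha+1}$, while when $\log\sfrac Na<2$ one has $\log\sfrac N{v_0}<2+\log\sfrac a{v_0}\le2\log\sfrac a{v_0}$, so the $v\le a/\e$ part alone already delivers the bound. This fixes $c_1(\alpha)>0$ as an explicit minimum of constants depending only on $\alpha$.

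Finally, choosing $c:=c_1(\alpha)$ and $\eta$ any fixed constant in $(0,\e^{-2})$, subtracting the $A$-part estimate from the lower bound on $\Sigma(a)$ gives $\sum_{v\in V}v^{-1}\big(\log\sfrac{a\vee v}{a\wedge v}\vee1\big)^\alpha\ge c_1(\alpha)\big(\log\sfrac N{v_0}\big)^{\alpha+1}-\sfrac c2\big(\log\sfrac N{v_0}\big)^{\alpha+1}=\sfrac c2\big(\log\sfrac N{v_0}\big)^{\alpha+1}$, which is \eqref{eq:pigconcl}. I expect the only genuinely delicate point to be the sum-to-integral comparison on the non-monotone range $v>a$ combined with the case split on the size of $\log\sfrac Na$; everything else is bookkeeping with absolute constants, and the hypothesis $v_0<\eta N$ serves only to keep $\log\sfrac N{v_0}$ bounded below (which $v_0<\min A/\e^2\le N/\e^2$ in any case guarantees).
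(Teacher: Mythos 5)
Your proof is correct and shares the paper's high-level skeleton (write the sum over $V$ as the full sum over $\{v_0,\dots,N\}$ minus the sum over $A$; bound the $A$-part via $\sum_{v\in A}v^{-1}\le\xi^2(A)/N$ and hypothesis \eqref{eq:pigcond}; lower-bound the full sum $\Sigma(a)$ by integral comparison on each side of $a$), but the way you settle the core lower bound on $\Sigma(a)$ is genuinely different from the paper and, I think, cleaner. The paper splits the range of $a$ at a threshold $\varepsilon_0 N$ with $\varepsilon_0$ chosen by solving an inequality backwards, handles the non-monotone range $v>a$ by a further split at the critical point $\e^\alpha a$ of $x\mapsto x^{-1}(\log(x/a))^\alpha$, and then needs $\eta$ to equal a specific power of $\varepsilon_0$ so that the ``$a$ near $N$'' case still delivers a fixed fraction of $(\log(N/v_0))^{\alpha+1}$ from the $v<a$ side alone. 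You instead (i) handle the whole range $v\ge\e a$ in one pass by observing $\log(x/a)\le 2\log(v/a)$ for $x\in[v,v+1]$ once $v\ge\e a$, which trades the paper's extra split for a harmless constant $2^{-\alpha}$, and (ii) split on $\log(N/a)\gtrless 2$ rather than $a\gtrless\varepsilon_0 N$; when $\log(N/a)<2$ the bound $\log(N/v_0)<2+\log(a/v_0)\le 2\log(a/v_0)$ (using $v_0<a/\e^2$) lets the $\Sigma_1$ term alone carry the day, so $\eta$ only needs to be $\le\e^{-2}$ and plays no active role. Both routes finish with the same convexity inequality $x^{\alpha+1}+y^{\alpha+1}\ge 2^{-\alpha}(x+y)^{\alpha+1}$. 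The only place where your write-up is slightly loose is the ``negligible $O(1/a)$ term'' remark: the subtracted boundary term in the $v\ge\e a$ integral is $(\log(\lceil\e a\rceil/a))^{\alpha+1}\le (1+1/a)^{\alpha+1}=1+O(1/a)$, not $O(1/a)$, so what is actually being absorbed is a constant of order $1$; but since $\log(N/a)\ge 2$ in that case, $(\log(N/a))^{\alpha+1}\ge 2^{\alpha+1}\ge 2$ and the ``$-1-O(1/a)$'' correction is still dominated by half the main term, so the conclusion stands once $a\ge\lceil 2\e^2\rceil$. Your version buys a simpler choice of $\eta$ and a shorter second case at no cost in generality.
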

\begin{proof}
We set $$\varepsilon_0=\e^{-(2+2(\log(\e^\alpha+{ \e^{-2}/2})))^{\frac1{1+\alpha}}},$$ $\eta=\varepsilon_0^{-2}$ and first assume that, for all $A\subset \{\lceil 2\e^2\rceil,\dots, N\}$ and $v_0<({\min A}/{\e^2})\wedge \eta N$,
\begin{equation}\label{eq:claim1}
\sum_{v=v_0}^N \frac 1v \big(\log\sfrac{a\vee v}{a\wedge v}\vee 1\big)^{\alpha}\geq \frac{1}{2^{\alpha+1}(\alpha+1)}\big(\log\sfrac{N}{v_0}\big)^{\alpha+1}\;\mbox{ for all } a\in A.
\end{equation}
Then \eqref{eq:pigcond} implies that 
\begin{align*}
\sum_{v\in V}\frac1v \big(\log\sfrac{a\vee v}{a\wedge v}\vee 1\big)^{\alpha} & \geq \sum_{v=v_0}^N \frac 1v \big(\log\sfrac{a\vee v}{a\wedge v}\vee 1\big)^{\alpha}-\sum_{v\in A} \frac 1v \big(\log\sfrac{a\vee v}{a\wedge v}\vee 1\big)^{\alpha}\\
&\geq \sfrac{1}{2^{\alpha+1}(\alpha+1)}\big(\log\sfrac{N}{v_0}\big)^{\alpha+1}-\big(\log\sfrac{N}{\min A}\vee 1\big)^{\alpha}\sum_{v\in A} \frac 1v\\
&\geq \sfrac{1}{2^{\alpha+1}(\alpha+1)}\big(\log\sfrac{N}{v_0}\big)^{\alpha+1}-c_{\eqref{eq:xitoroot}}\big(\log\sfrac{N}{\min A}\vee 1\big)^{\alpha}\,\sfrac{\xi^2(A)}{N}\\
&\geq \big(\sfrac{1}{2^{\alpha+1}(\alpha+1)}-c_{\eqref{eq:xitoroot}}\sfrac c2\big) \big(\log\sfrac{N}{v_0}\big)^{\alpha+1} =\sfrac c2\big(\log\sfrac{N}{v_0}\big)^{\alpha+1},
\end{align*}
setting $c:=\big(2^{\alpha}(1+\alpha)(1+c_{\eqref{eq:xitoroot}})\big)^{-1}$. The conclusion of the lemma holds subject to~\eqref{eq:claim1}.\smallskip

Let $a\leq \lfloor\varepsilon_0N+1\rfloor$. Observe that  $$\sum_{v=v_0}^N \frac 1v \big(\log\sfrac{a\vee v}{a\wedge v}\vee 1\big)^{\alpha}\geq \sum_{v=v_0}^{\lfloor\frac a\e\rfloor} \frac 1v \big(\log\sfrac{a}{v}\big)^{\alpha}+\sum_{v=\lceil a\e\rceil}^{N} \frac 1v \big(\log\sfrac{v}{a}\big)^{\alpha}=:\Sigma_1+\Sigma_2.$$
As $x\mapsto x^{-1}\big(\log\sfrac ax\big)^{\alpha}$ is decreasing, we find, using $v_0<{a}/{\e^2}$ in the last step, that 
\begin{align*}
\Sigma_1& \geq \int_{v_0}^{\lfloor\frac a\e\rfloor +1}\frac1x \big(\log\sfrac ax\big)^{\alpha}\di x =\sfrac{1}{1+\alpha}\Big(\big(\log\sfrac{a}{v_0}\big)^{\alpha+1}-\big(\log\sfrac{a}{\lfloor\frac a\e\rfloor +1}\big)^{\alpha+1}\Big)
\\ & \geq \sfrac{1}{1+\alpha}\Big(\big(\log\sfrac{a}{v_0}\big)^{\alpha+1}-1\Big)
\geq \sfrac{1}{2(1+\alpha)}\big(\log\sfrac{a}{v_0}\big)^{\alpha+1}.
\end{align*}
The map $x\mapsto x^{-1}\big(\log(x/a)\big)^{\alpha}$ has a unique maximum at $x=\e^\alpha a$, thus 
\begin{align}
\sum_{v=\lfloor \e^\alpha\rfloor +1}^N\sfrac1v \big(\log\sfrac va\big)^{\alpha}& \geq \int_{\lfloor \e^\alpha\rfloor +1}^{N+1}\sfrac1x \big(\log\sfrac xa\big)^{\alpha}\di x
 \geq \sfrac{1}{\alpha+1}\big(\big(\log\sfrac{N+1}{a}\big)^{\alpha+1}-\big(\log\sfrac{\lfloor \e^\alpha\rfloor +1}{a}\big)^{\alpha+1}\big)\notag\\
& \geq\sfrac{1}{\alpha+1} \big(\big(\log\sfrac{N+1}{a}\big)^{\alpha+1}-\big(\log(\e^\alpha+\sfrac1a)\big)^{\alpha+1}\big)\label{eq:sigma21}
\end{align}
and
\begin{equation}\label{eq:sigma22}
\begin{aligned}
\sum_{v=\lceil a\e \rceil}^{\lfloor\e^\alpha a\rfloor}\frac1v \big(\log\sfrac va\big)^{\alpha}& \geq \1\{\lceil a\e \rceil\leq \lfloor\e^\alpha a\rfloor\}\int_{\lceil a\e \rceil-1}^{\lfloor\e^\alpha a\rfloor}\frac1x \big(\log\sfrac xa\big)^{\alpha}\di x\\
&\geq \sfrac{\1\{\lceil a\e \rceil\leq \lfloor\e^\alpha a\rfloor\}}{\alpha+1}\big(\big(\log(\e^\alpha-\sfrac1a)\big)^{\alpha+1}-1\big)
\end{aligned}
\end{equation}
Combining \eqref{eq:sigma21} and \eqref{eq:sigma22}, we get
\begin{align*}
\Sigma_2& \geq \sfrac{1}{\alpha+1} \big(\big(\log\sfrac{N+1}{a}\big)^{\alpha+1}-\big(\log(\e^\alpha+\sfrac1a)\big)^{\alpha+1}+\1\{\lceil a\e \rceil\leq \lfloor\e^\alpha a\rfloor\}\big(\big(\log(\e^\alpha-\sfrac1a)\big)^{\alpha+1}-1\big)\big)\\
& \geq \sfrac{1}{\alpha+1} \big(\big(\log\sfrac{N+1}{a}\big)^{\alpha+1}-\big(\log(\e^\alpha+\sfrac1{ 2\e^2})\big)^{\alpha+1}-1\big)\geq  \sfrac{1}{2(\alpha+1)}\big(\log\sfrac{N+1}{a}\big)^{\alpha+1},
\end{align*}
where we used the condition $$a\leq(N+1)\exp(-(2+2(\log(\e^\alpha+\frac1{2\e^2})))^{\frac1{1+\alpha}})$$ in the last step. Combining the estimates for $\Sigma_1$ and $\Sigma_2$ yields
$$\sum_{v=v_0}^N \frac 1v \big(\log\sfrac{a\vee v}{a\wedge v}\vee 1\big)^{\alpha}\geq\sfrac{1}{2(\alpha+1)}\big(\big(\log\sfrac{a}{v_0}\big)^{\alpha+1}+\big(\log\sfrac{N+1}{a}\big)^{\alpha+1}\big)\geq \sfrac{1}{2^{\alpha+1}(\alpha+1)}\big(\log\sfrac{N}{v_0}\big)^{\alpha+1}$$
by convexity of $x\mapsto x^{\alpha+1}.$
Now consider $a\geq\lceil \varepsilon_0 N\rceil.$ We have
\begin{align*}
\sum_{v=v_0}^N\frac1v\big(1\vee\log\sfrac{a\vee v}{a\wedge v}\big)^{\alpha}& \geq \int_{v_0}^{\lceil\varepsilon_0 N\rceil}\frac1x\big(1\vee \log\sfrac ax\big)^{\alpha}\di x \geq \int_{v_0}^{\varepsilon_0 N}\frac1x\big(\log\sfrac {\varepsilon_0 N}x\big)^{\alpha}\di x \\
& =\sfrac{1}{\alpha+1}\big(\log\sfrac{\varepsilon_0 N}x\big)^{\alpha+1}.
\end{align*}
Since $$\Big(\log\frac{\varepsilon_0 N}x\Big)^{\alpha+1}\geq \sfrac{1}{K}(\log\sfrac{N}{v_0})^{\alpha+1}$$
if and only if  $$v_0\leq N\varepsilon_0^{({1-(\frac{1}{K})^{\frac{1}{1+\alpha}}})^{-1}},$$ 
we choose $K=2^{\alpha+1}$ and the desired bound~\eqref{eq:claim1} follows.
\end{proof}
\begin{lem}[\cref{lem:elldecay}]\label[lem]{lem:tedious3}
For any $\alpha\geq0,\delta\in(0,2\alpha+2)$ let \begin{equation}\label{eq:kOdef}k_0(\delta,\alpha)=\min\{k \geq 3 :\delta \log k \geq (2\alpha+2-\delta)k\log(1+\sfrac1k)+1 \}\end{equation}
then $$\ell_k\leq N\e^{-(2\alpha+2-\delta)(k-k_0)\log k}\;\textrm{ for all } k_0\leq k <K_*(N).$$ Furthermore, there is a constant $c>0$ depending only on $s_0$ such that $$\ell_k\geq CN\e^{-(4\alpha+5)k(1\vee\log k)},\; \textrm{ for all }k.$$
\end{lem}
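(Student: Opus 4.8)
The plan is to work with the logarithmic variables $L_k:=\log(N/\ell_k)\ge0$ and to extract from~\eqref{def:ell} a two--sided one--step recursion for $(L_k)$, from which both bounds will follow by induction. Write $c_\star:=\tfrac14 c_{\eqref{eq:FMLB}}c_{\ref{lem:tediousmain}}$ and, for $k\ge1$,
$$R_k:=\frac{s_0\delta_0}{1\vee\log k}\prod_{i=1}^{k-1}c_\star L_i^{\alpha+1},\qquad \rho_k:=\log R_k,$$
so that~\eqref{def:ell} becomes $\ell_k=N\wedge\lfloor N/R_k^2\rfloor$. Fixing $s_0,\delta_0$ as in \cref{prop:scoregrowth} — in particular $s_0$ large — I would first record the elementary facts that for $1\le k<K_*(N)$ one has $2\le\ell_k<N$, hence $2\rho_k\le L_k\le 2\rho_k+\log2$; that $(\ell_k)_{k\le K_*}$ is strictly decreasing, which forces $R_{k+1}>R_k$ and thus $c_\star L_k^{\alpha+1}>(1\vee\log(k+1))/(1\vee\log k)\ge1$; and, telescoping the last inequality, that $R_k>s_0\delta_0$, so that $\rho_k\ge\log(s_0\delta_0)\ge1$ on the whole range $k<K_*$. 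For $k\ge K_*$ one has $\ell_k=\ell_{K_*}$, so it suffices to prove both bounds for $k\le K_*$. Feeding $2\rho_{k-1}\le L_{k-1}\le2\rho_{k-1}+\log2$ into the identity $\rho_k=\rho_{k-1}+(\alpha+1)\log L_{k-1}+\log c_\star+\log\tfrac{1\vee\log(k-1)}{1\vee\log k}$ then yields a constant $C_0=C_0(\alpha,c_\star)$ with
$$\rho_{k-1}+(\alpha+1)\log\rho_{k-1}-C_0\ \le\ \rho_k\ \le\ \rho_{k-1}+(\alpha+1)\log\rho_{k-1}+C_0,\qquad 2\le k\le K_*.$$

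For the first assertion I would prove $\rho_k\ge\tfrac12(2\alpha+2-\delta)(k-k_0)\log k$ for $k_0\le k<K_*$ by induction; then $L_k\ge2\rho_k$ gives $\ell_k=Ne^{-L_k}\le Ne^{-(2\alpha+2-\delta)(k-k_0)\log k}$. In the inductive step the gain $(\alpha+1)\log\rho_{k-1}-C_0$ must dominate the required increment $\tfrac12(2\alpha+2-\delta)\bigl[(k-k_0)\log\tfrac k{k-1}+\log(k-1)\bigr]$; writing $2\alpha+2-\delta=2(\alpha+1)-\delta$ and bounding $(k-k_0)\log\tfrac k{k-1}\le k\log(1+\tfrac1k)$, this reduces, using the induction hypothesis for $\rho_{k-1}$, to an inequality of exactly the form $\delta\log k\ge(2\alpha+2-\delta)k\log(1+\tfrac1k)+1$, i.e.\ to $k\ge k_0(\delta,\alpha)$. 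The finitely many $k$ for which the hypothesis is still too weak are covered by monotonicity and the choice of $s_0$: then $\rho_k\ge\log(s_0\delta_0)$ already exceeds $\tfrac12(2\alpha+2-\delta)(k-k_0)\log k$ on any fixed initial range.

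For the second assertion I would run the symmetric induction from the right--hand inequality above, proving $\rho_k\le\tfrac12(4\alpha+5)\,k(1\vee\log k)+C_1$ for all $k\le K_*$, the constant $C_1$ (depending on $s_0$ through $\rho_1=\log(s_0\delta_0)$) chosen to absorb the base case and the finitely many small $k$ for which the crude estimate $\log\rho_{k-1}\le2(1\vee\log(k-1))$ is not yet valid. In the step the gain is at most $2(\alpha+1)(1\vee\log(k-1))+C_0$, whereas the ansatz grows by at least $\tfrac12(4\alpha+5)(1\vee\log(k-1))$, and since $\tfrac12(4\alpha+5)-2(\alpha+1)=\tfrac12$ the step closes once $1\vee\log(k-1)\ge2C_0$ — which is precisely what the generous constant $4\alpha+5$ buys. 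Finally $L_k\le2\rho_k+\log2\le(4\alpha+5)k(1\vee\log k)+(2C_1+\log2)$, so $\ell_k\ge cNe^{-(4\alpha+5)k(1\vee\log k)}$ with $c:=e^{-(2C_1+\log2)}$ depending only on $s_0$, and the bound extends to all $k$ since $\ell_k=\ell_{K_*}$ and $K_*\le k$ for $k\ge K_*$. The passage to $(L_k)$ and the one--step recursion are routine; the main obstacle is the careful bookkeeping in the two inductions, namely checking that the threshold $k_0(\delta,\alpha)$ and the constant $4\alpha+5$ are \emph{exactly} large enough to swallow the additive constant $C_0$, the rounding error from $\ell_k=\lfloor N/R_k^2\rfloor$ and the gap between $\log\rho_{k-1}$ and $\log k$, together with disposing of the small--$k$ regime where $\ell_k$ is still comparable with $N$.
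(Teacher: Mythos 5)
Your proposal is correct and takes essentially the same route as the paper: translating the defining inequality \eqref{def:ell} into a one-step recursion for the logarithmic quantity $\log(N/\ell_k)$ and then running an induction in $k$, with the threshold $k_0(\delta,\alpha)$ chosen exactly so that the inductive step closes. The difference is purely one of presentation: you carry out the bookkeeping more explicitly via the two-sided recursion for $\rho_k=\log R_k$ (and clearly separate the additive constant $C_0$, the treatment of the initial range of $k$, and the verification that $\tfrac12(4\alpha+5)-2(\alpha+1)=\tfrac12$ absorbs the error), whereas the paper's printed argument is quite terse and only sketches the lower bound as "a similar argument"; both proofs rely, at the edge of the inductive range, on $s_0\delta_0$ being large, which you state more openly than the paper does.
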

\begin{proof}
We first show the upper bound by induction in $k$. For $k=k_0$ the assertion is trivially true as soon as $N$ is large enough. Now assume  that \smash{$\ell_k\leq N\e^{-(2\alpha+2-\delta)(k-k_0)\log k}$} for some $k<K_*(N)-1$ then we have, by definition of $(\ell_k)_{k\geq 1},$ that
$\log\ell_{k+1}\leq \log \ell_k -(2\alpha+2)\log(\log N-\log\ell_k)+1$ 
and applying the induction hypothesis yields
\begin{align*}
\log\sfrac{\ell_{k+1}}{N}\leq -(2\alpha+2-\delta)(k-k_0)\log(k+1) &+\big((2\alpha+2-\delta)(k-k_0)\log\sfrac{k+1}{k}+1\big)\\
&-(2\alpha+2)\log\big((k+1)\sfrac{k}{k+1}(2\alpha+2)\log k\big).
\end{align*}
By \eqref{eq:kOdef} we have $\sfrac{k}{k+1}(2\alpha+2)\log k\geq 1$, hence
\begin{align*}
\log\sfrac{\ell_{k+1}}{N}\leq &-(2\alpha+2-\delta)(k+1-k_0)\log(k+1)\\
 &+\big((2\alpha+2-\delta)(k-k_0)\log\sfrac{k+1}{k}+1-\delta\log(k+1)\big).
\end{align*}
The second term of the sum is negative by \eqref{eq:kOdef} and the induction is complete.
The lower bound follows by a similar argument.
\end{proof}

\begin{ack}
We would like to thank the anonymous referee for a very careful reading of the article and numerous suggestions which helped to improve the presentation of our results. 
\end{ack}

\end{document}